\theoremstyle{definition} 
\newtheorem{Example}{Example}
\newtheorem*{Definition}{Definition} 
\newtheorem{Question}{Question} 
\newtheorem{Proposition}{Proposition}
\numberwithin{Theorem}{section}
\numberwithin{equation}{section}
\DeclareMathOperator{\stab}{Stab}
\DeclareMathOperator{\Irr}{Irr}
\DeclareMathOperator{\Aut}{Aut}
\DeclareMathOperator{\tr}{tr}
\newcommand{\C}{\ensuremath{\mathbb{C}}}
\newcommand{\T}{\ensuremath{\mathbb{T}}} 
\newcommand{\Z}{\ensuremath{\mathbb{Z}}}
\newcommand{\Y}{\mathcal{Y}} 
\newcommand{\X}{\mathcal{X}} 
\newcommand{\minimatrix}[4]{\begin{bmatrix}#1&#2\\#3&#4\end{bmatrix}}
\newcommand{\megamatrix}[9]{\begin{bmatrix} #1 & #2 & #3\\ #4 & #5 & #6 \\ #7 & #8 & #9 \end{bmatrix}}
\renewcommand{\vec}[1]{{\boldsymbol{\bf #1}}}
\title{The graphic nature of the symmetric group} 
\author[\tiny Brumbaugh]{J. L. Brumbaugh} 
\author[Bulkow]{Madeleine Bulkow} 
	\address{Department of Mathematics, Scripps College, 1030 Columbia Ave., Claremont, CA 91711} 
\author[Garcia]{Luis Alberto Garcia} 
\author[Garcia]{Stephan Ramon Garcia}
	\address{Department of Mathematics, Pomona College, 610 N. College Ave., Claremont, CA 91711} 
	\email{Stephan.Garcia@pomona.edu}
	\urladdr{\url{http://pages.pomona.edu/~sg064747}}
\author[Michal]{Matt Michal} \address{School of Mathematical Sciences, Claremont Graduate University, 150 E. 10th St., Claremont, CA 91711} 
\author[Turner]{Andrew P. Turner} \address{Department of Mathematics, Harvey Mudd College, 301 Platt Blvd., Claremont, CA 91711}
\thanks{Partially supported by NSF Grants DMS-1001614 and DMS-1265973.  We also gratefully acknowledge the support of the
Fletcher Jones Foundation and Pomona College's SURP Program.}
\begin{document}

\begin{abstract}
We investigate a remarkable class of exponential sums which are derived from the symmetric groups and which
display a diverse array of visually appealing features.   
Our interest in these expressions stems not only from their astounding visual properties, 
but also from the fact that they represent a novel and intriguing class of \emph{supercharacters}. 
\end{abstract}

\maketitle


\section{Introduction}

	Our aim in this note is to study a remarkable class of exponential sums which are derived from the symmetric groups
	and which display a diverse array of visually appealing features.  
	To be more specific, we are concerned here with the properties of certain complex-valued functions 
	associated to orbits in $(\Z/n\Z)^d$ arising from the natural permutation action 
	of the symmetric group $S_d$.
	The images of these functions, when plotted as subsets of the complex plane, display a wide variety of 
	striking features of great complexity and subtlety (see Figures \ref{FigureHook} and \ref{FigureHook2}).
	
	Our interest in these functions stems not only from their astounding visual properties, 
	but also from the fact that they represent a novel and 
	intriguing class of \emph{supercharacters}.  The theory of supercharacters, of which classical character 
	theory is a special case, was recently introduced in an axiomatic fashion by P.~Diaconis and 
	I.M.~Isaacs in 2008 \cite{DiIs08}, 
	who generalized earlier work of C.~Andr\'e \cite{An95, An01, An02}.  More recently,
	the study of supercharacters on abelian groups has proven surprisingly fruitful
	\cite{HendricksonNew, HendricksonThesis, SESUP, RSS, GNGP}.
	It is in this novel framework that our exponential sums arise.

	We make no attempt to survey the rapidly evolving literature on supercharacters and focus only on the
	essentials necessary for our investigations.
	To get started, we require the following important definition.

	\begin{Definition}[Diaconis-Isaacs \cite{DiIs08}]
		Let $G$ be a finite group, let $\X$ be a partition of the set $\Irr G$ of irreducible characters of $G$, 
		and let $\Y$ be a partition of $G$.  We call the ordered pair $(\X, \Y)$ a \emph{supercharacter theory} if 
		\begin{enumerate}\addtolength{\itemsep}{0.5\baselineskip}
			\item[(i)] $\Y$ contains $\{1\}$, where $1$ denotes the identity element of $G$,
			\item[(ii)] $|\X| = |\Y|$,
			\item[(iii)] For each $X$ in $\X$, the character $\sigma_X = \sum_{\chi \in X} \chi(1)\chi$
				is constant on each $Y$ in $\Y$.
		\end{enumerate}
		The characters $\sigma_X$ are called \emph{supercharacters} and the elements $Y$ of $\Y$
		 are called \emph{superclasses}.
	\end{Definition}

	\begin{figure}[H]
		\begin{subfigure}{0.45\textwidth}
			\centering
			\includegraphics[width=\textwidth]{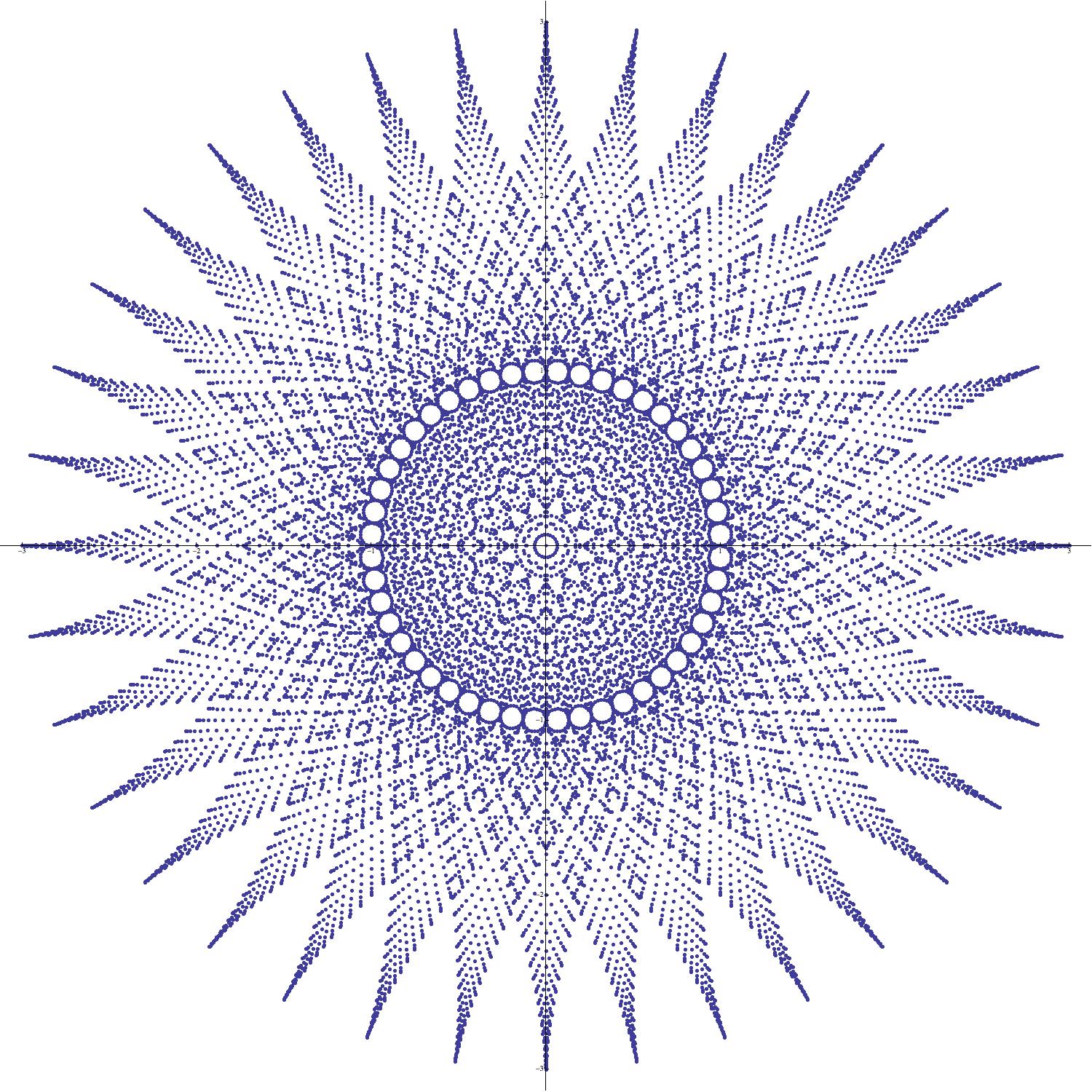}
			\caption{\scriptsize $n=96$, $d=3$, $X=S_3(1,6,1)$}
		\end{subfigure}		
		\qquad
		\begin{subfigure}{0.45\textwidth}
			\centering
			\includegraphics[width=\textwidth]{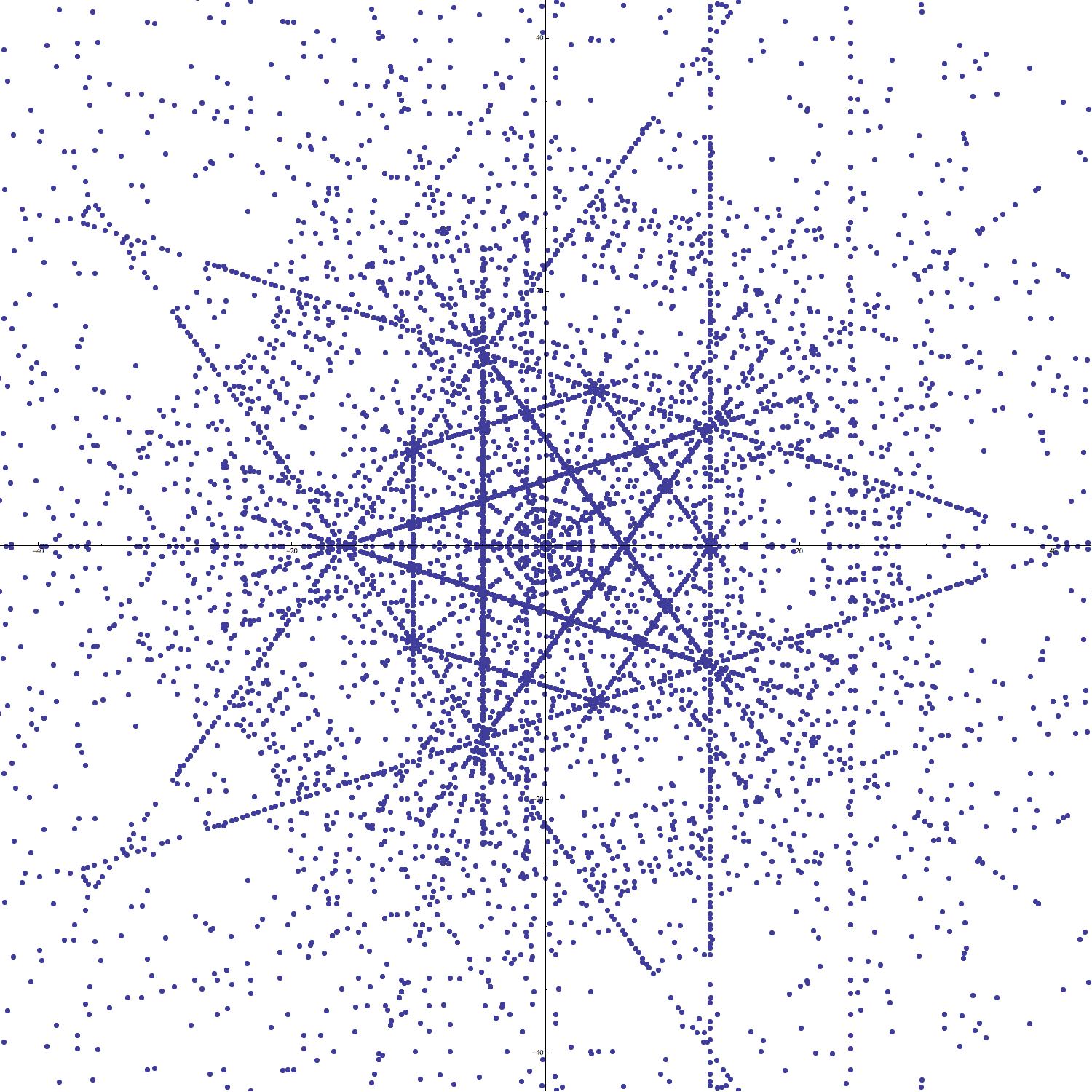}
			\caption{\scriptsize $n=10$, $d=8$, $X=S_8(0,1,3,3,3,3,3,8)$}
		\end{subfigure}	
		\\[5pt]	
		
		\begin{subfigure}{0.45\textwidth}
			\centering
			\includegraphics[width=\textwidth]{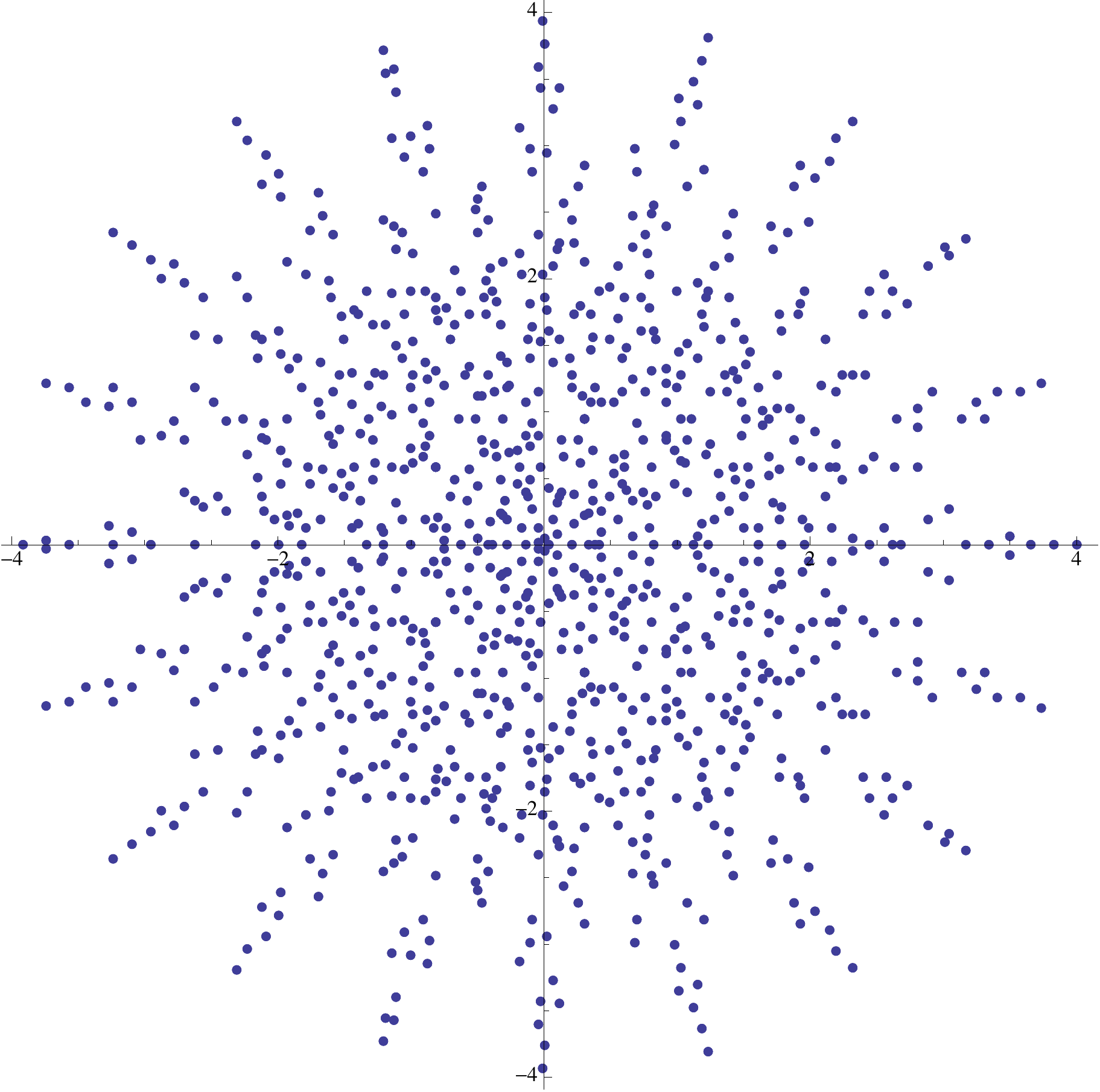}
			\caption{\scriptsize $n=15$, $d=4$, $X=S_4(1,1,1,3)$}
		\end{subfigure}		
		\qquad
		\begin{subfigure}{0.45\textwidth}
			\centering
			\includegraphics[width=\textwidth]{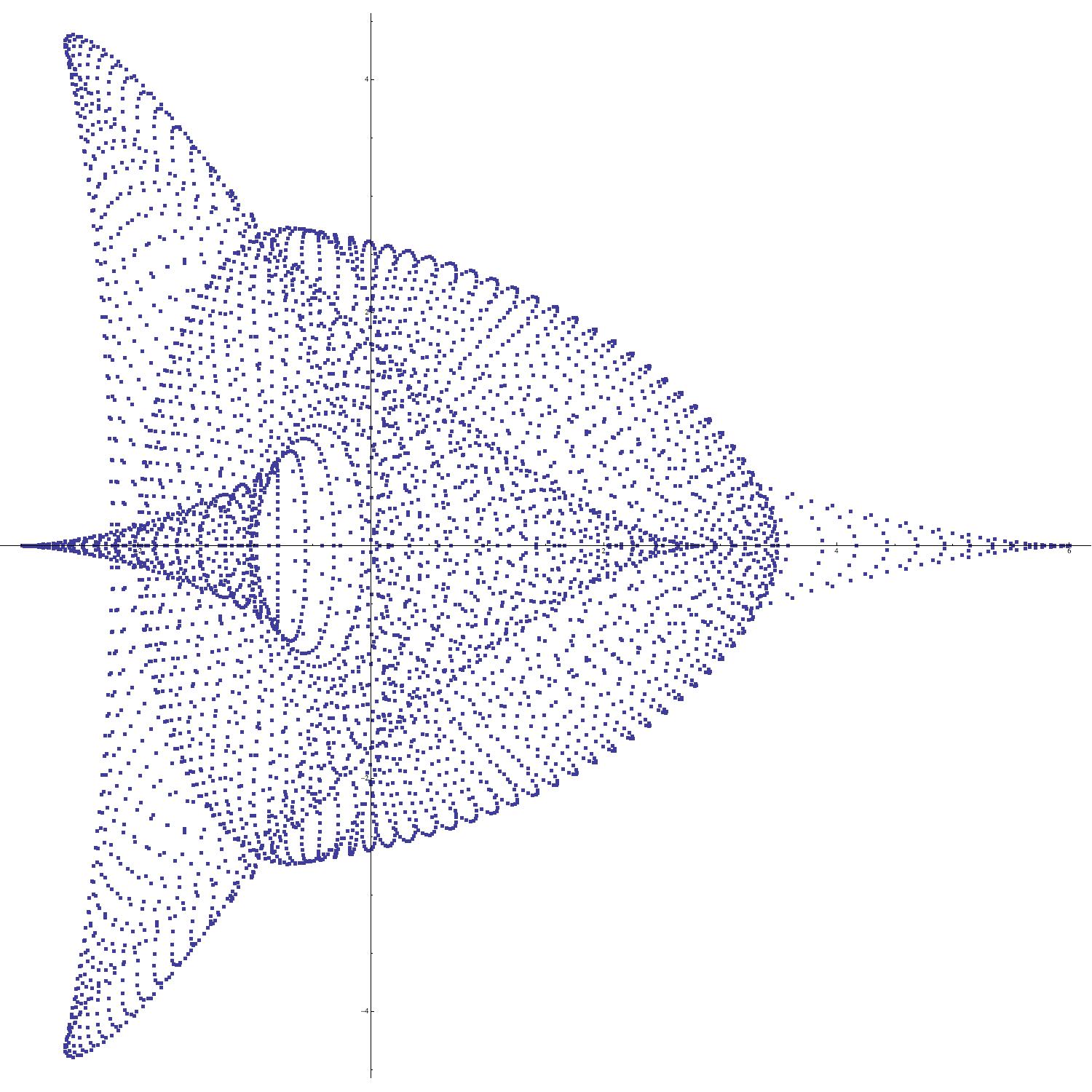}
			\caption{\scriptsize $n=173$, $d=3$, $X=S_3(1,2,170)$}
		\end{subfigure}				
		\\[5pt]

		\begin{subfigure}{0.45\textwidth}
	                \centering
	                \includegraphics[width=\textwidth]{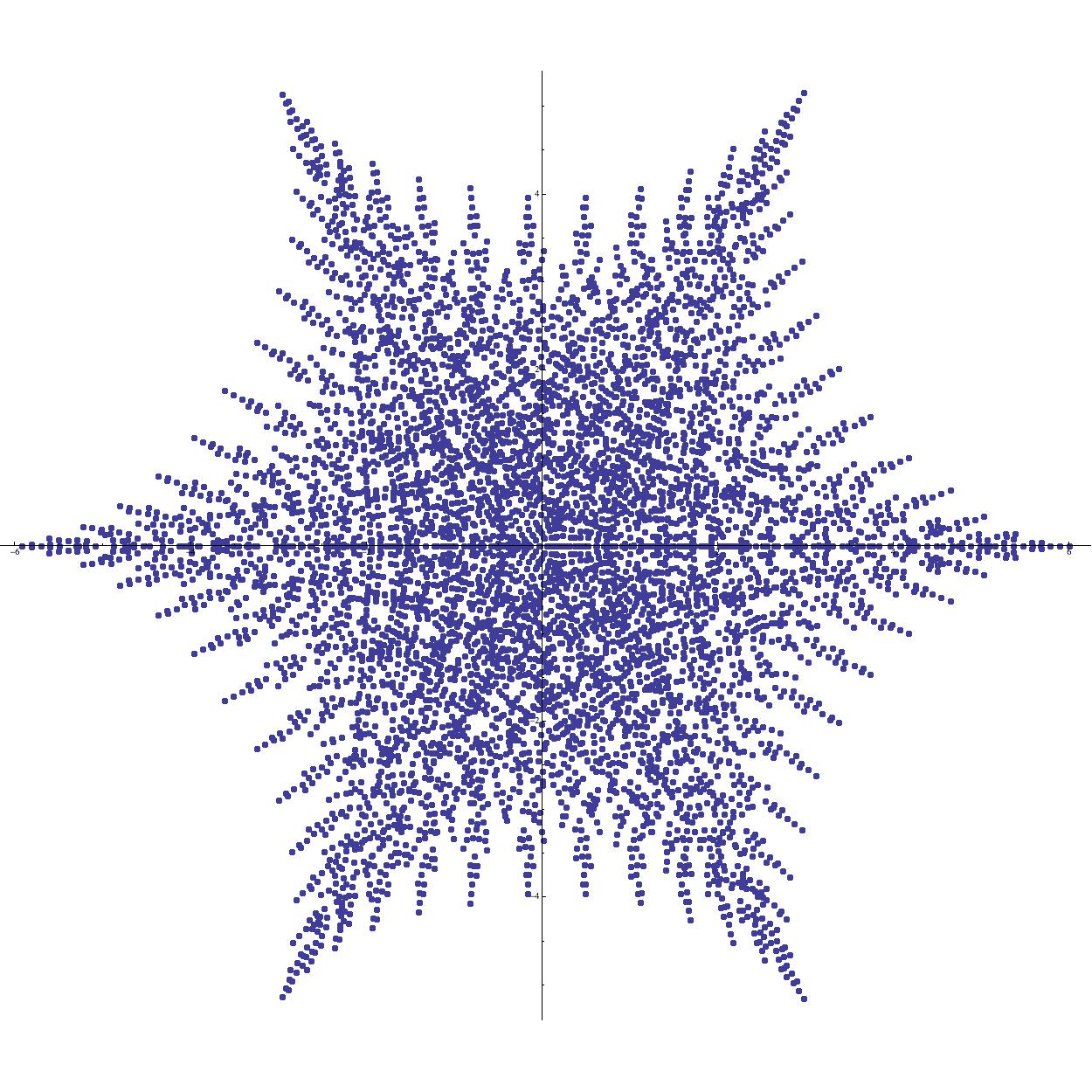}
	                \caption{\scriptsize $n=19$, $d=6$, $X = S_6(1,1,1,1,1,14)$}
	        \end{subfigure}
		\qquad
		\begin{subfigure}{0.45\textwidth}
	                \centering
	                \includegraphics[width=\textwidth]{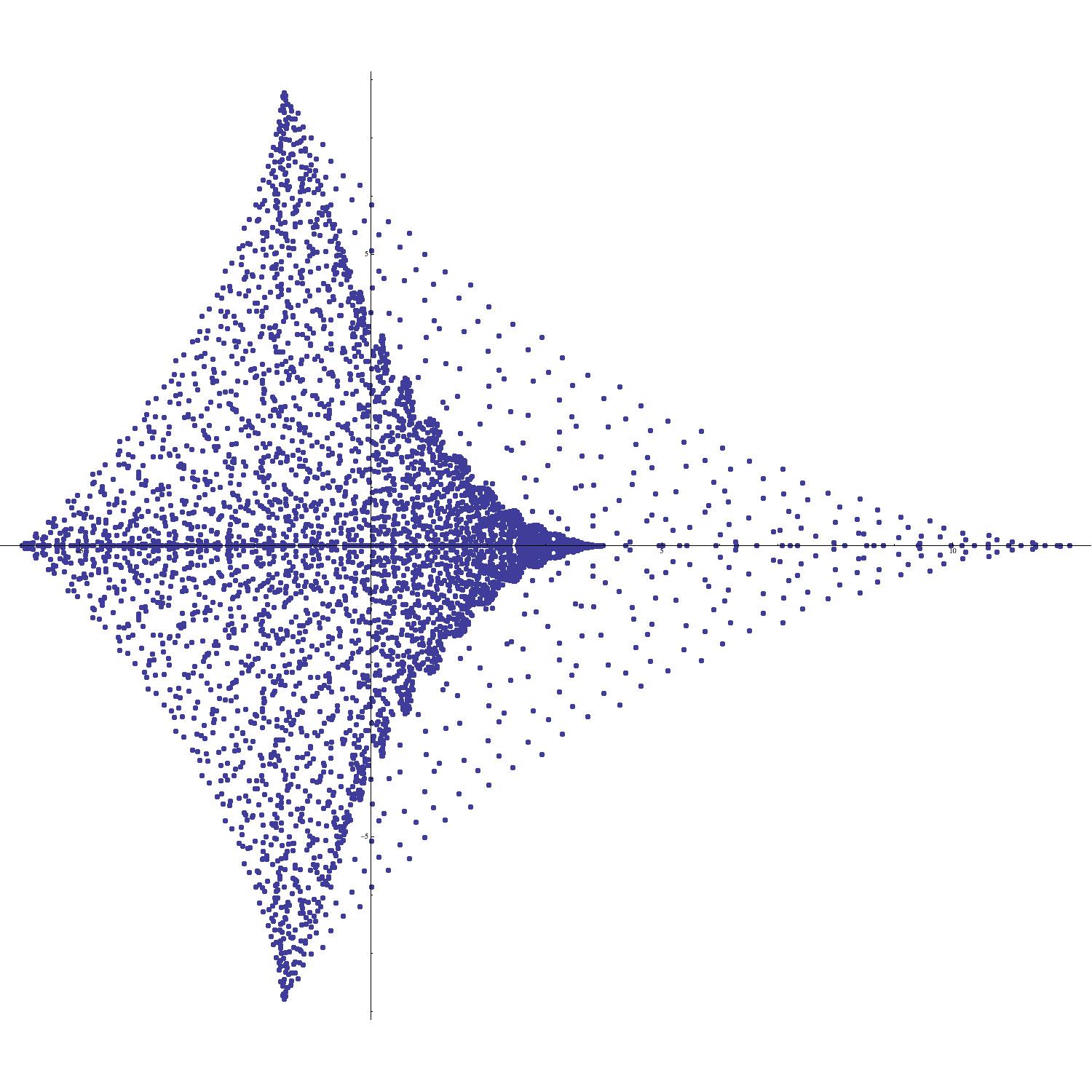}
	                \caption{\scriptsize $n=47$, $d=4$, $X = S_4(0,1,1,45)$}
	        \end{subfigure}
	        		
		\caption{Images of supercharacters $\sigma_X:(\Z/n\Z)^d\to\C$ 
		for various moduli $n$, dimensions $d$, and orbits $X$.}
		\label{FigureHook}
	\end{figure}

	\begin{figure}[H]
		\begin{subfigure}{0.45\textwidth}
			\centering
			\includegraphics[width=\textwidth]{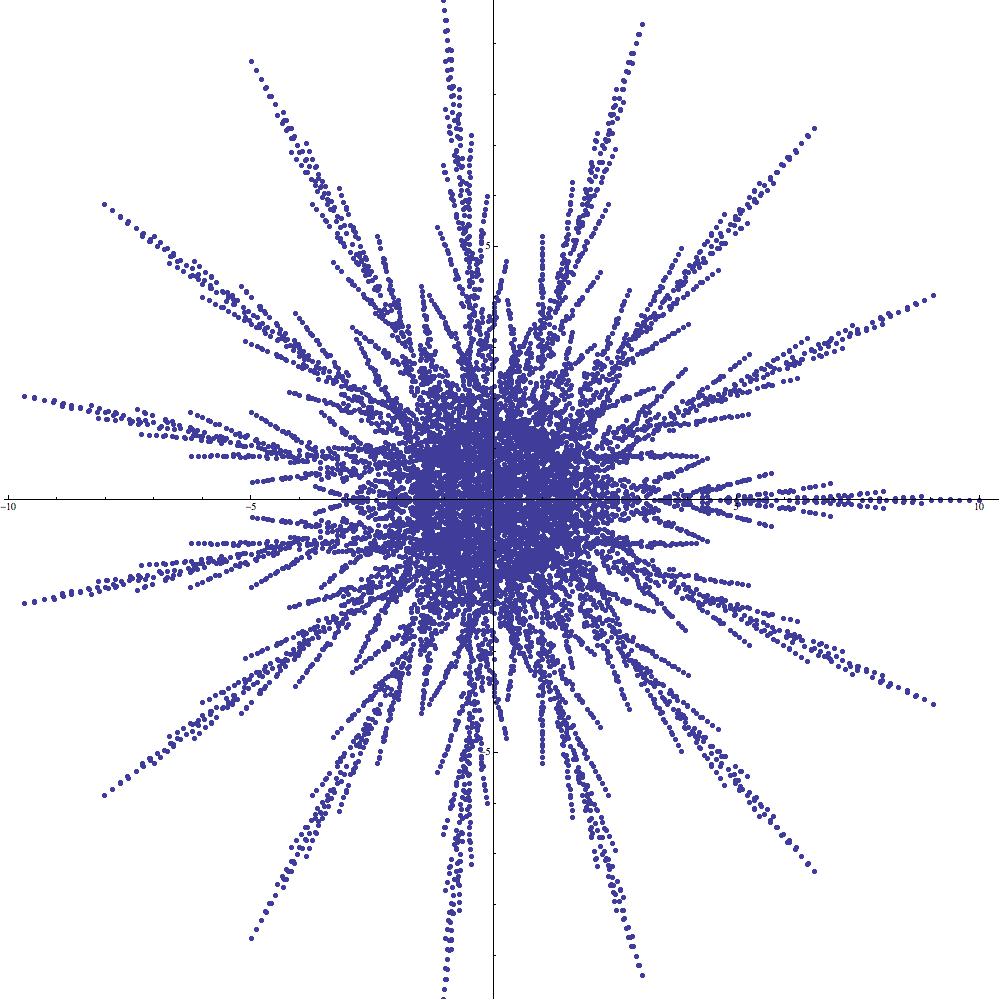}
			\caption{\scriptsize $n=24$, $d=5$, $X=S_5(1,1,2,2,2)$}
		\end{subfigure}		
		\qquad
		\begin{subfigure}{0.45\textwidth}
			\centering
			\includegraphics[width=\textwidth]{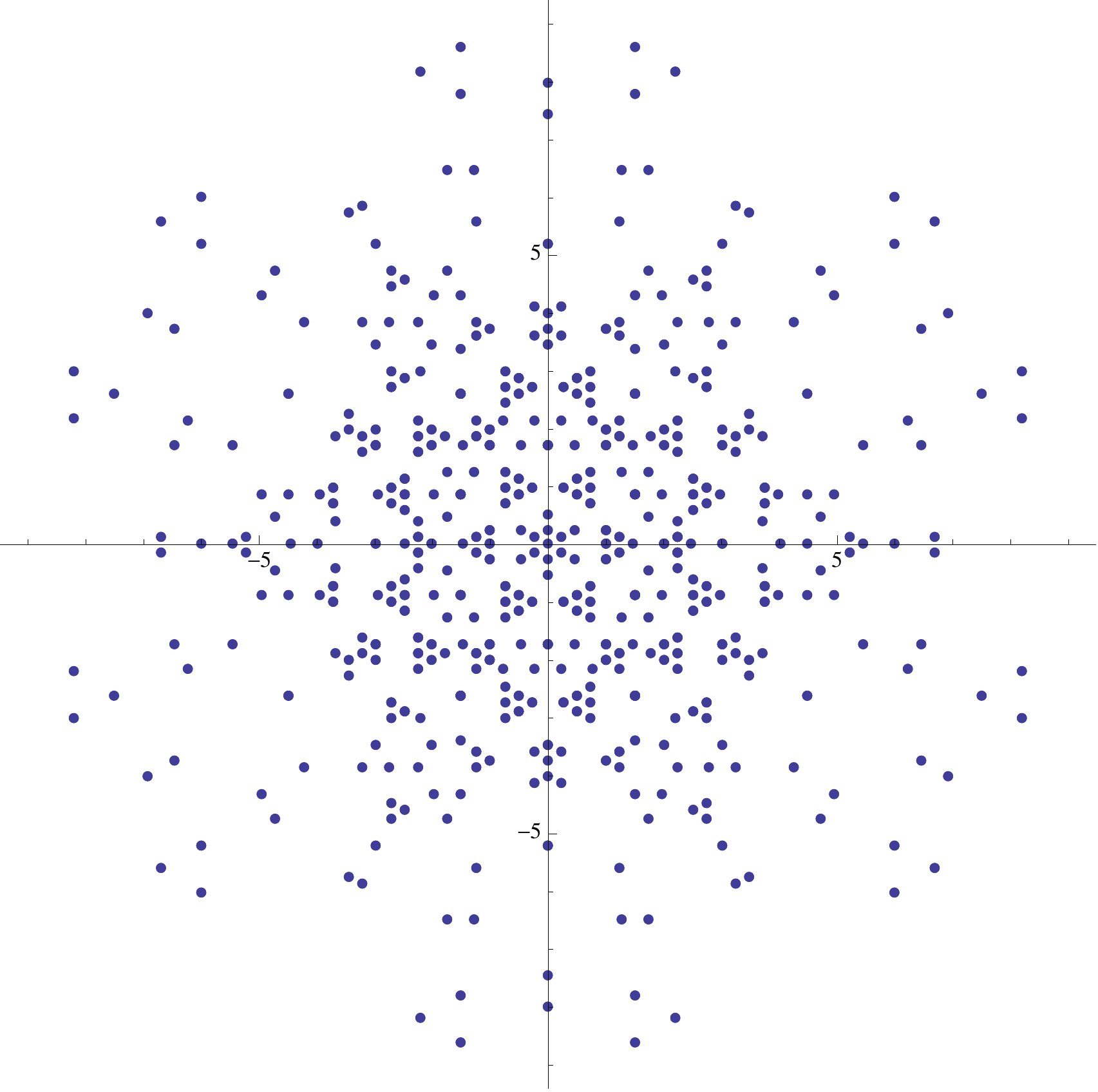}
			\caption{\scriptsize $n=12$, $d=4$, $X=S_4(0,3,3,4)$}
		\end{subfigure}	
		\\[5pt]	
		
		\begin{subfigure}{0.45\textwidth}
			\centering
			\includegraphics[width=\textwidth]{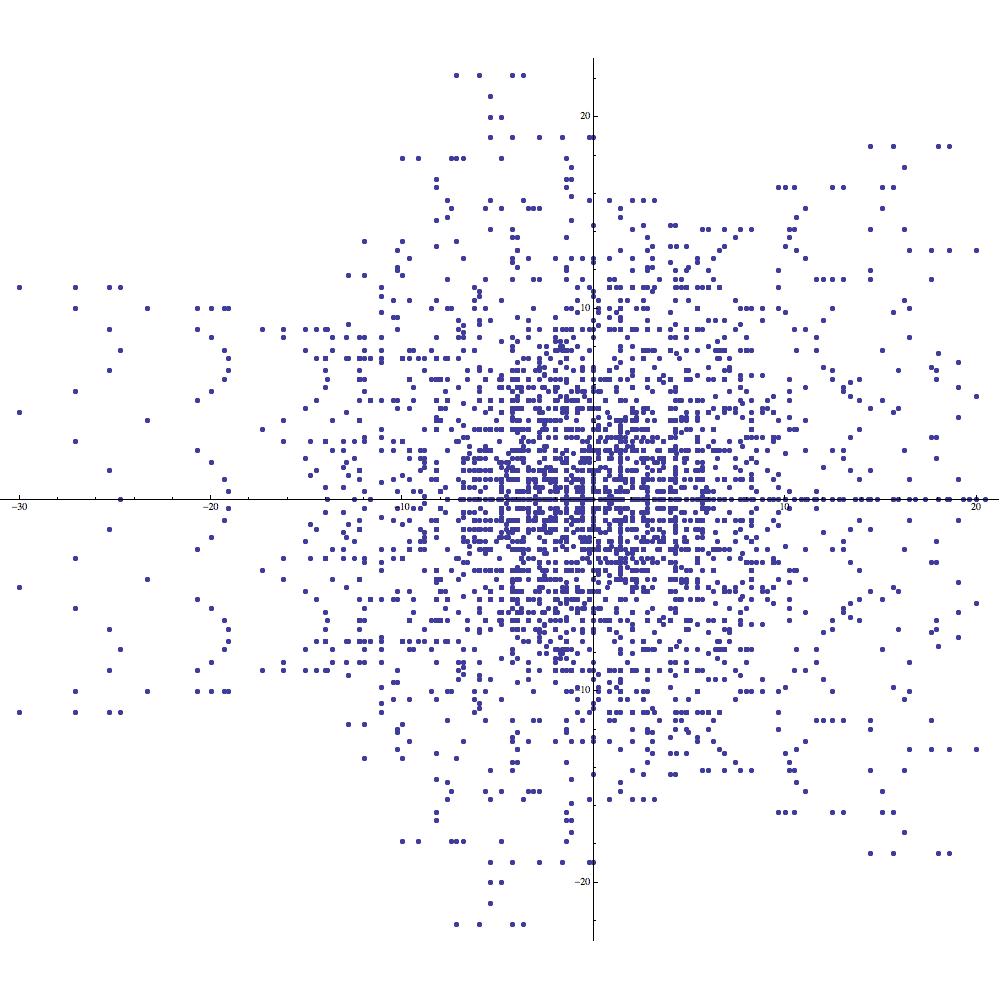}
			\caption{\scriptsize $n=16$, $d=7$, $X=S_7(3,5,8,8,8,8,8)$}
		\end{subfigure}		
		\qquad
		\begin{subfigure}{0.45\textwidth}
			\centering
			\includegraphics[width=\textwidth]{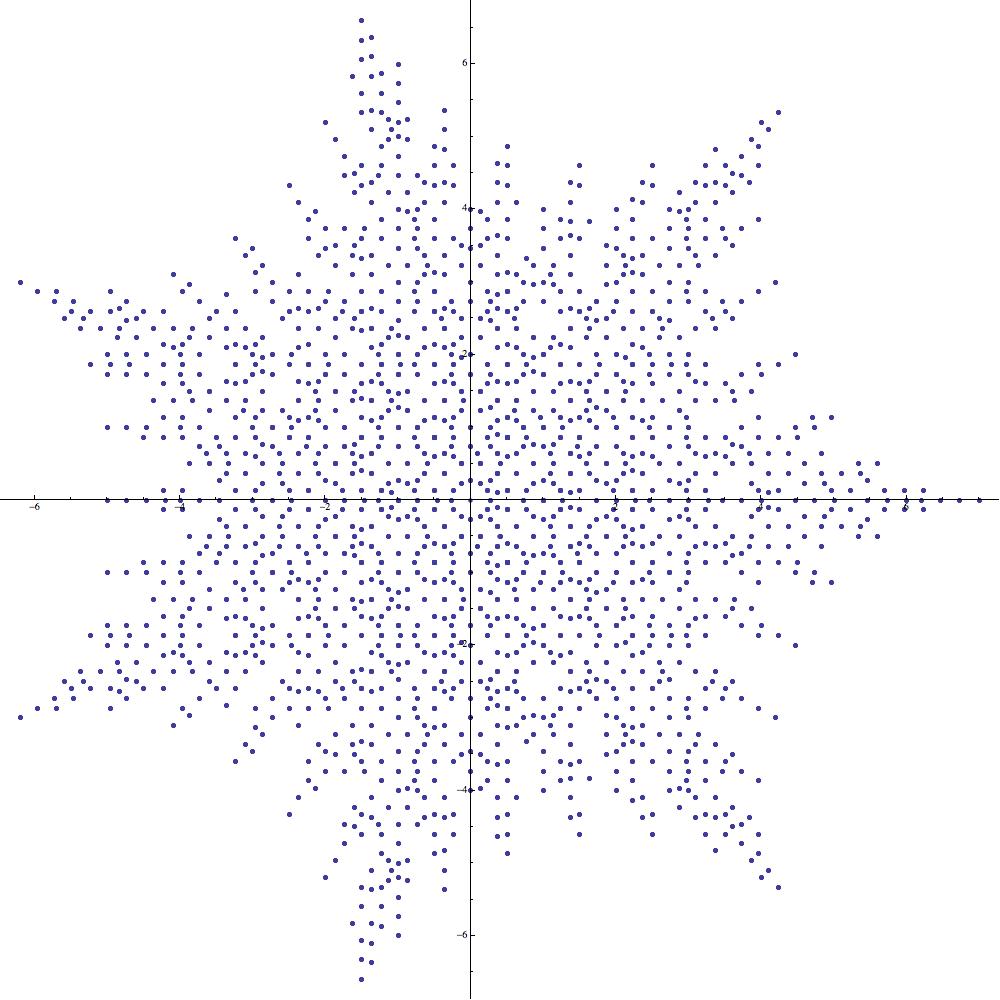}
			\caption{\scriptsize $n=12$, $d=7$, $X=S_7(1,1,1,1,1,1,6)$}
		\end{subfigure}	
		\\[5pt]	
		
		\begin{subfigure}{0.45\textwidth}
			\centering
			\includegraphics[width=\textwidth]{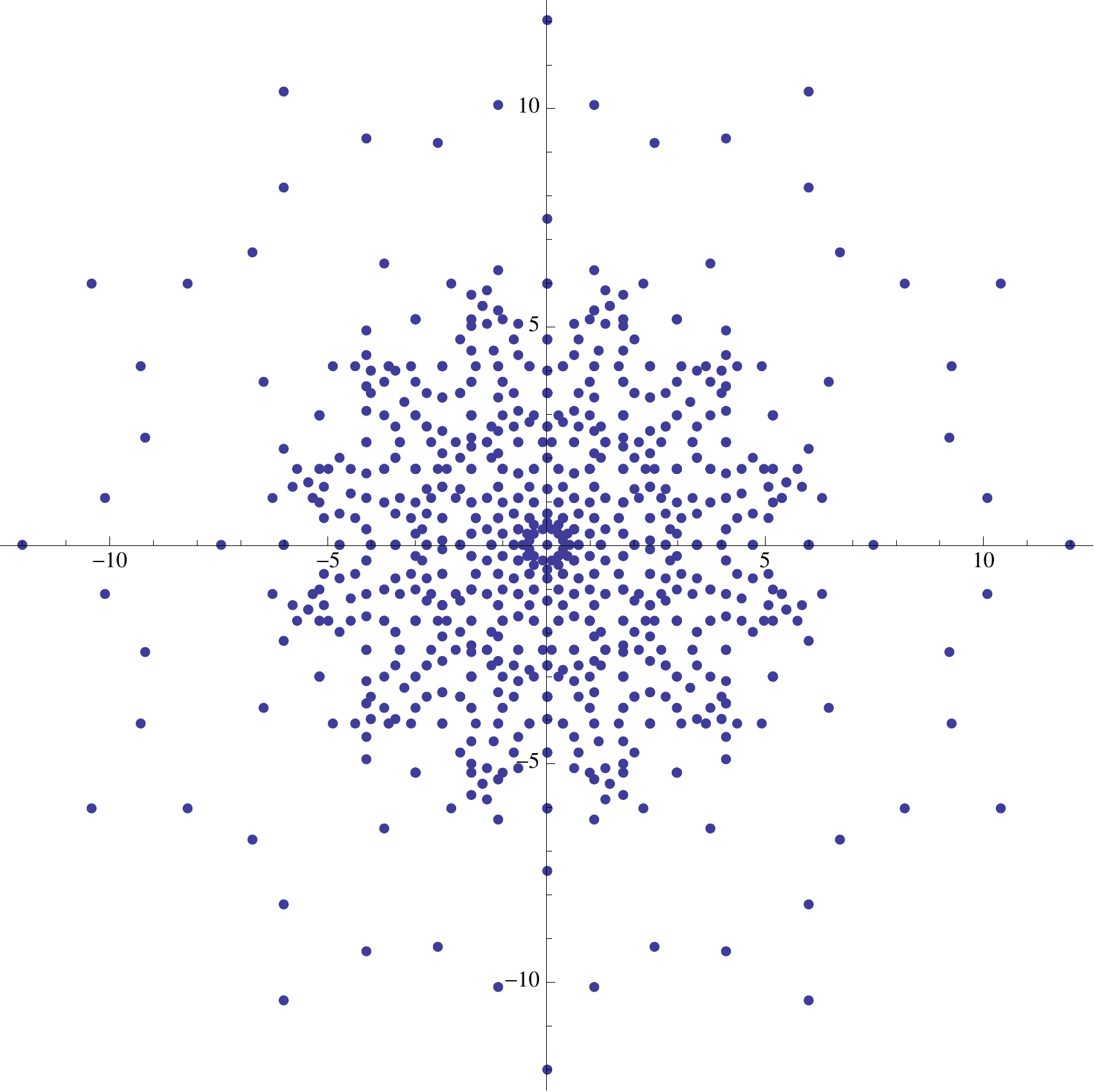}
			\caption{\scriptsize $n=12$, $d=4$, $X=S_4(3,4,4,6)$}
		\end{subfigure}		
		\qquad		
		\begin{subfigure}{0.45\textwidth}
			\centering
			\includegraphics[width=\textwidth]{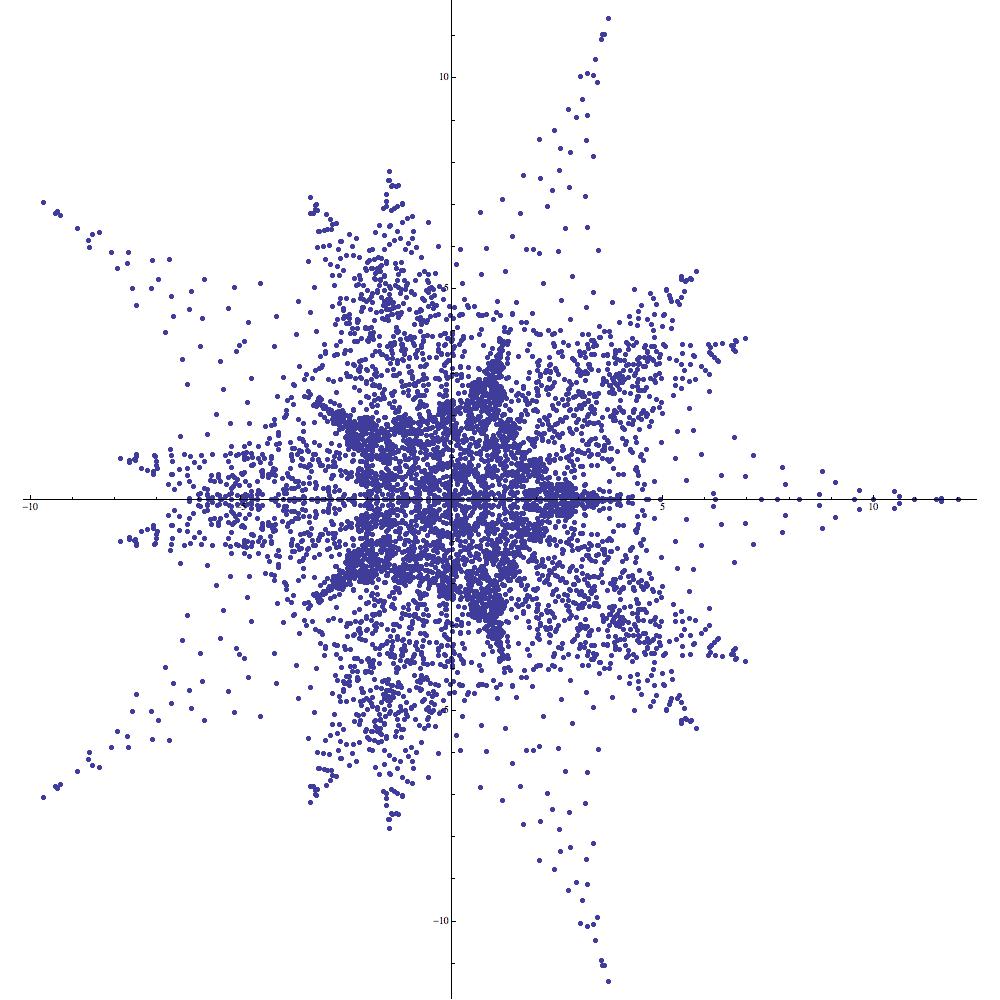}
			\caption{\scriptsize $n=30$, $d=4$, $X = S_4(4,6,7,7)$}
		\end{subfigure}
			        		
		\caption{Images of supercharacters $\sigma_X:(\Z/n\Z)^d\to\C$ 
		for various moduli $n$, dimensions $d$, and orbits $X$.}
		\label{FigureHook2}
	\end{figure}
		
	It turns out that  a variety of exponential sums 
	which are of interest in number theory can be viewed quite profitably
	as supercharacters on certain abelian groups \cite{RSS, SESUP, GNGP}.
	Although the sums we propose to study do not appear to have such celebrated names 
	attached to them, they are produced by the same mechanism as the 
	expressions displayed in Table \ref{FigureNT}.
	\begin{table}[h]
		\begin{equation*}\footnotesize
		\begin{array}{|c|c|c|c|}
			\hline
			\text{Name} & \text{Expression} & G & \Gamma \\
			\hline\hline
			\text{Gauss} & 
			\eta_j = \displaystyle \sum_{\ell=0}^{d-1} e\left( \frac{g^{k\ell+j}}{p}\right)
			& \Z/p\Z & \text{nonzero $k$th powers mod $p$} \\[20pt]
			\text{Ramanujan} & c_n(x)=\displaystyle \sum_{ \substack{ j = 1 \\ (j,n) = 1} }^n \!\!\!\! 
			e\left(\frac{ jx}{n} \right) & \Z/n\Z & (\Z/n\Z)^{\times} \\[20pt]
			\text{Kloosterman} & K_p(a,b)=\displaystyle \sum_{ \ell = 0  }^{p-1} e\left( \frac{a\ell + b \overline{\ell} }{p}\right)
			 & (\Z/p\Z)^2 & \left\{ \minimatrix{u}{0}{0}{u^{-1}}  : u \in (\Z/p\Z)^{\times} \right\} \\[20pt]
			\text{Heilbronn} & \displaystyle H_p(a)=\sum_{\ell=0}^{p-1} e\left(\frac{a \ell^p}{p^2} \right) & \Z/p^2\Z & 
			\footnotesize\text{nonzero $p$th powers mod $p^2$} \\[20pt]
			\hline
		\end{array}
		\end{equation*}	
		\caption{Gaussian periods, Ramanujan sums, Kloosterman sums, and Heilbronn sums 
		appear as supercharacters arising from the action of a subgroup $\Gamma$ of $\Aut G$ for a suitable
		abelian group $G$.  Here $p$ denotes an odd prime number.}
		\label{FigureNT}
	\end{table}
	
	One way to construct a supercharacter theory on a given finite group $G$ 
	is to employ the action of a subgroup
	$\Gamma$ of $\operatorname{Aut} G$ to obtain the partition $\Y$.  In this setting, the elements of $\Y$
	are precisely the orbits in $G$ under the action of $\Gamma$.  One then seeks a compatible action of
	$\Gamma$ on $\Irr G$ and appeals to a result of Brauer \cite[Thm.~6.32, Cor.~6.33]{Isaacs} 
	to conclude that the actions of $\Gamma$ on $G$ and on $\Irr G$ yield the same number of orbits.
	The precise details of the general case do not concern us here, 
	for the groups and automorphisms we consider below will be completely transparent.

	In the following,
	we denote by $\vec{x} = (x_1,x_2,\ldots,x_d)$ and $\vec{y}=(y_1,y_2,\ldots,y_d)$ 
	typical elements of $(\Z/n\Z)^d$ and let $\vec{x} \cdot \vec{y} = \sum_{i=1}^d x_i y_i$.
	We also set $e(x) = \exp(2 \pi i x)$, so that the function $e(x)$ is periodic with period $1$.
	We are interested here in the supercharacter theory on $(\Z/n\Z)^d$ induced by the natural permutation action of the symmetric
	group $S_d$.  In this setting, the superclasses are simply the orbits $S_d\vec{y} = \{ \pi(\vec{y}):\pi \in S_d\}$ 
	in $(\Z/n\Z)^d$.  Recalling that $\Irr G = \{ \psi_{\vec{x}} : \vec{x} \in G \}$, where
	$\psi_{\vec{x}}(\vec{y}) = e ( \frac{\vec{x} \cdot \vec{y}}{n} )$,
	we let $S_d$ act upon $\Irr G$ in the obvious manner by setting $\psi_{\vec{x}}^{\pi} = \psi_{\pi(\vec{x})}$.
	Letting $\X$ denote the set of orbits in $\Irr G$ and $\Y$ denote the set of orbits
	in $G$, it is clear that $|\X| = |\Y|$.  We often denote this common value by $N$.
		
	Although the elements of each orbit $X$ in $\X$ are certain characters $\psi_{\vec{x}}$,
	we identify $\psi_{\vec{x}}$ with the vector $\vec{x}$ so that $X$
	is stable under the action $\vec{x} \mapsto \pi (\vec{x})$ of $S_d$.
	Having established this convention, 
	for each $X$ in $\X$ we define the corresponding character
	\begin{equation}\label{eq-Supercharacter}
		\sigma_X(\vec{y}) = \sum_{\vec{x} \in X} e \left( \frac{ \vec{x} \cdot \vec{y} } {n} \right),
	\end{equation}
	noting that $\sigma_X(\vec{y}) = \sigma_X(\vec{y}')$ whenever $\vec{y}$ and $\vec{y}'$
	belong to the same $S_d$ orbit.\footnote{If $X = S_d\vec{x}$, then 
		$\sigma_X(\vec{y}) = \operatorname{per} \big( e(\frac{x_j y_k}{n}) \big)_{j,k=1}^d$, where $\operatorname{per}$
		denotes the \emph{permanent} of a matrix.}  
	The pair $(\X,\Y)$ 
	constructed above is a supercharacter theory on $(\Z/n\Z)^d$.\footnote{One 
	can also view this endeavor in terms of the classical character theory of the semidirect
	product $(\Z/n\Z)^d \rtimes S_d$ (sometimes referred to as a \emph{generalized symmetric group}).
	However, the supercharacter approach cleaner and more natural since $(\Z/n\Z)^d \rtimes S_d$ 
	is highly nonabelian and possesses a large number of conjugacy classes, whereas $(\Z/n\Z)^d$ is abelian and, by comparison, has
	relatively few superclasses.  Moreover, many of the irreducible
	characters of $(\Z/n\Z)^d \rtimes S_d$ are uninteresting for our purposes
	(e.g., assuming only $0$ or $n$th roots of unity as values).}
	
	We refer to the characters 
	\eqref{eq-Supercharacter} as \emph{symmetric supercharacters} (or often just \emph{supercharacters})
	and the sets $Y$ as \emph{superclasses}.  Expanding upon the notational liberties we have taken above,
	we choose to identify $X$, whose elements are the irreducible characters which comprise
	$\sigma_X$, with the set of vectors $\{ \vec{x} : \psi_{\vec{x}} \in X\}$.  
	Having made this identification, we see
	that $\X = \Y$.  In light of this, we frequently regard the elements $X$ of $\X$ as superclasses.
	Since $\sigma_X$ is constant on each superclass $Y$, if $\vec{y}$ belongs to $Y$ we often write
	$\sigma_X(Y)$ instead of $\sigma_X(\vec{y})$.
	
	In addition to \eqref{eq-Supercharacter}, there is another description of symmetric 
	supercharacters which is more convenient in certain circumstances.  Letting
	$\stab(\vec{x}) = \{ \pi\in S_d : \pi( \vec{x}) = \vec{x} \}$,
	it follows that the orbit $X = S_d \vec{x}$ contains $|\stab(\vec{x})|$ copies of $\vec{x}$ whence
	\begin{equation} \label{eq-Stab}
		\sigma_X(\vec{y}) = \frac{1}{|\stab(\vec{x})|}\sum_{\pi \in S_d} e \left( \frac{  \pi(  \vec{x}) \cdot \vec{y} } {n} \right).
	\end{equation}
	
	Although we do not need the following observation for our work, it is worth mentioning since it indicates that
	symmetric supercharacters also enjoy a variety of nontrivial algebraic properties.
	Fixing an enumeration $X_1,X_2,\ldots,X_N$ of $\X = \Y$, we label the 
	supercharacters corresponding to these sets $\sigma_1,\sigma_2,\ldots,\sigma_N$.  It is possible to show
	that the $N \times N$ matrix	
	\begin{equation}\label{eq-U}
		U = \frac{1}{\sqrt{n^d}} \left(  \frac{   \sigma_i(X_j) \sqrt{  |X_j| }}{ \sqrt{|X_i|}} \right)_{i,j=1}^N
	\end{equation}
	is symmetric (i.e., $U = U^T$) and unitary.  In fact, $U$ encodes a type of Fourier transform on the space
	of superclass functions (i.e., functions $f:(\Z/n\Z)^d\to\C$ which are constant on each superclass).  
	This is a consequence of more general considerations, of which 
	symmetric supercharacters are a special case.
	Complete details can be found in \cite{SESUP}, although some of these ideas are already present in 
	\cite{RSS,CKS}.

	We are now ready to proceed to the heart of the paper (Section \ref{SectionGraphics}), 
	where a variety of features of symmetric supercharacters are surveyed, documented, and explored.
	As one might expect from a paper appearing in this particular venue, most of the phenomena described below 
	were discovered experimentally.  While we are able to explain some of these phenomena, many others
	remain largely mysterious.  In light of this, we conclude 
	with a number of open problems motivated by our numerical investigations (Section \ref{SectionOpen}).
	We invite other mathematicians, perhaps those armed with more sophisticated tools than we here possess, to continue
	our work, for the study of symmetric supercharacters is surely fertile ground.  To this end, we include in
	Appendix \ref{SectionCode} the \texttt{Mathematica} code used to produce our supercharacter plots.

\section{Graphical properties}\label{SectionGraphics}
	Our approach here will be to survey and explain a variety of intriguing qualitative 
	features exhibited by the images of symmetric supercharacters.
	We proceed roughly in order of increasing complexity, starting here with the most elementary properties.

\subsection{Maximum modulus}

	Since there are precisely $|X|$ terms of unit modulus which comprise the sum which defines $\sigma_X$,
	it follows that $|\sigma_X(\vec{y})| \leq |X|$ for all $\vec{y}$ in $(\Z/n\Z)^d$. 
	Setting $\vec{y} = \vec{0}$ reveals that this inequality is sharp.
	If $X = S_d\vec{x}$ where the vector $\vec{x}$ in $(\Z/n\Z)^d$ has precisely $k_1,k_2,\ldots,k_n$ occurrences 
	of the elements $1,2,\ldots,n$ (so that $k_1+k_2+\cdots+k_n = d$), then it follows that
	\begin{equation*}
		|\sigma_X(\vec{y})| \leq \frac{d!}{k_1!k_2!\cdots k_n!}.
	\end{equation*}
	
\subsection{Conjugate symmetry}\label{SubsectionConjugate}
	For each superclass $Y$, the set $-Y$ obtained by negating each element of $Y$ is
	another superclass.  The definition \eqref{eq-Supercharacter} now yields
	\begin{equation}\label{eq-ConjugatePairs}
		\sigma_{X}(-Y) = \overline{\sigma_X(Y)} = \sigma_{-X}(Y).
	\end{equation}	
	Thus the image of any $\sigma_X$ is symmetric with respect to the real axis
	(see Figure \ref{FigureConjugate}).  Furthermore, if $X = -X$, then
	$\sigma_X$ is real-valued (see Figure \ref{FigureReal}).
	
	\begin{figure}[h]
		\begin{subfigure}{0.3\textwidth}
			\centering
			\includegraphics[width=\textwidth]{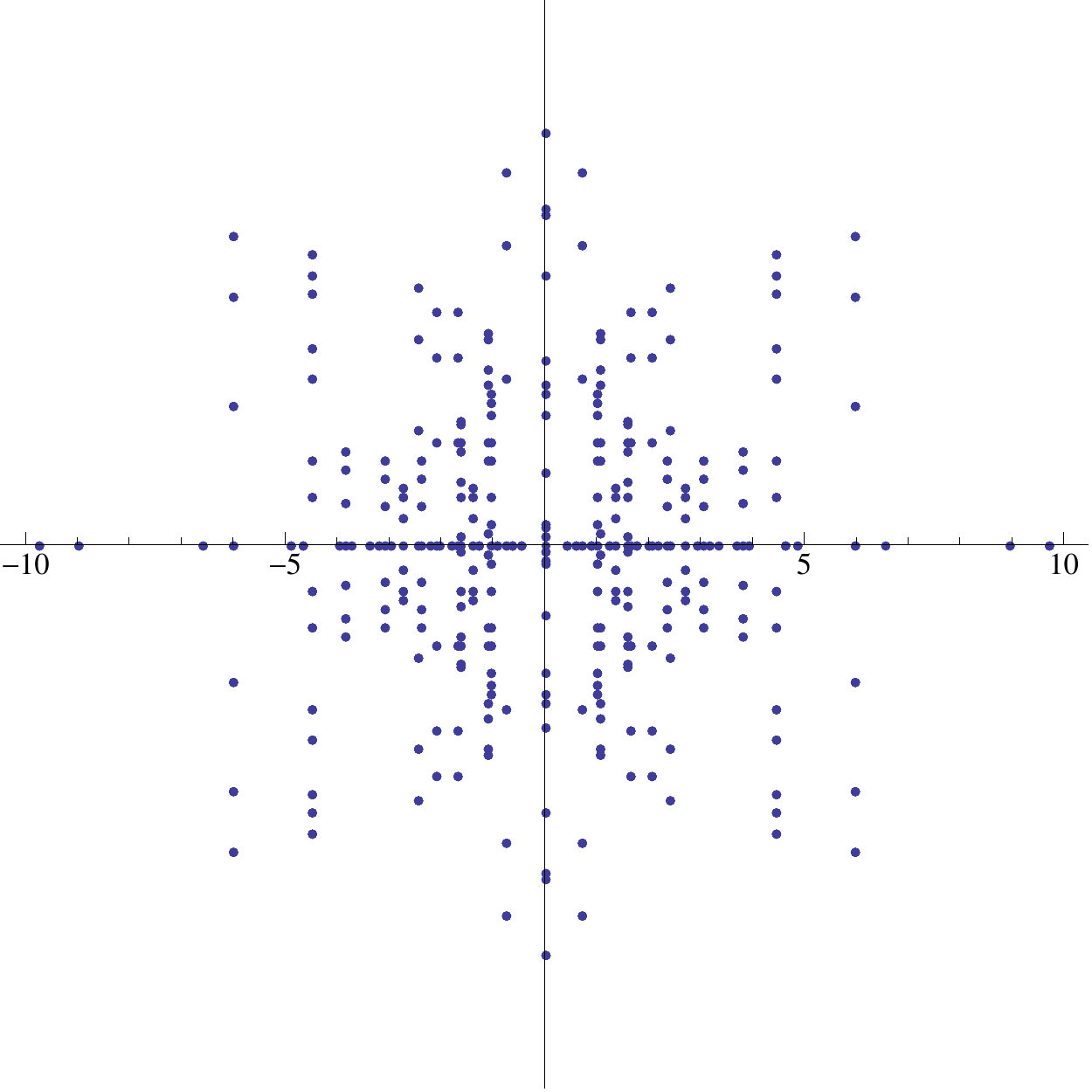}\quad
			\caption{$X = S_4(0,0,1,6)$}
		\end{subfigure}
		\quad
		\begin{subfigure}{0.3\textwidth}
			\centering
			\includegraphics[width=\textwidth]{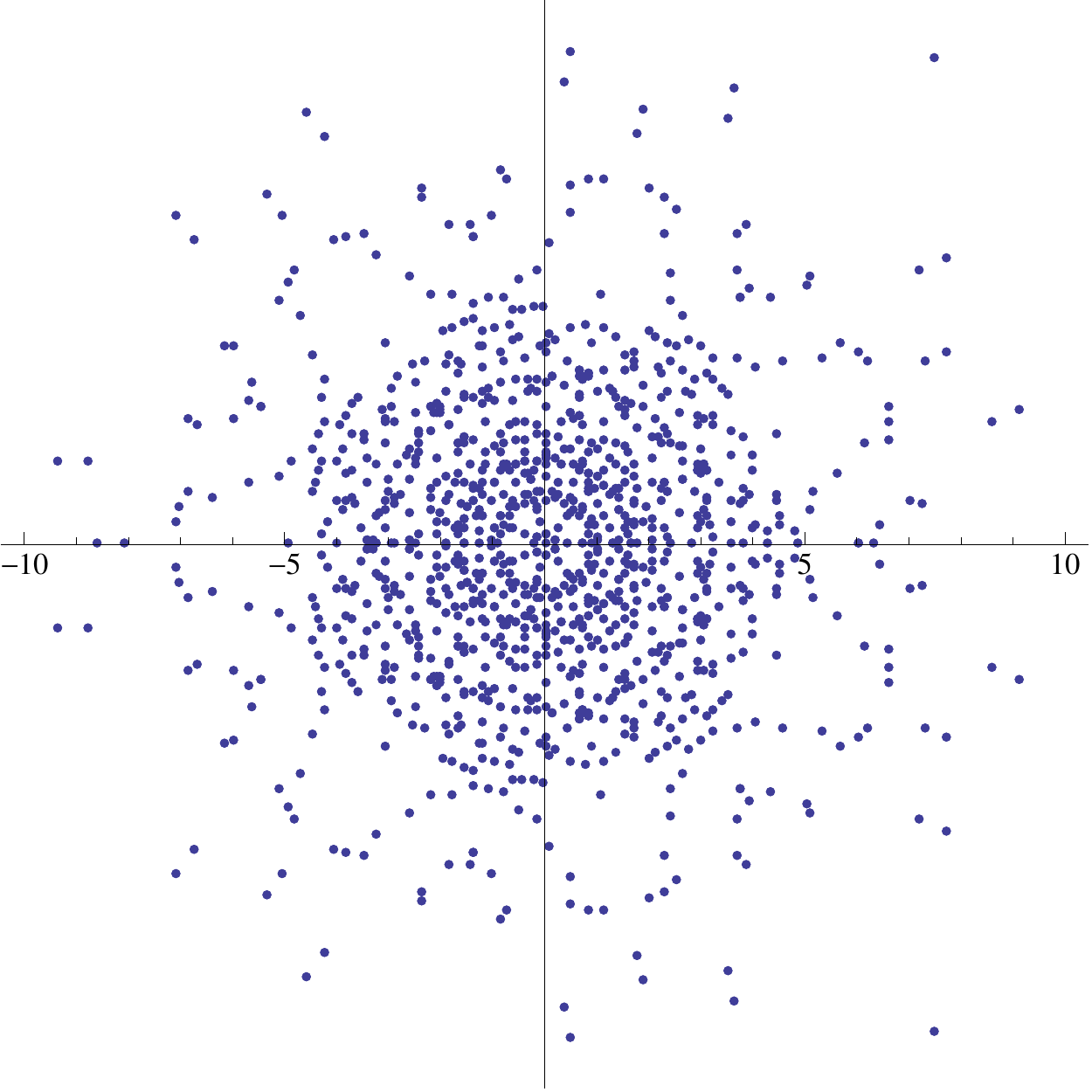}
			\caption{$X = S_4(0,1,1,6)$}
		\end{subfigure}
		\quad
		\begin{subfigure}{0.3\textwidth}
			\centering
			\includegraphics[width=\textwidth]{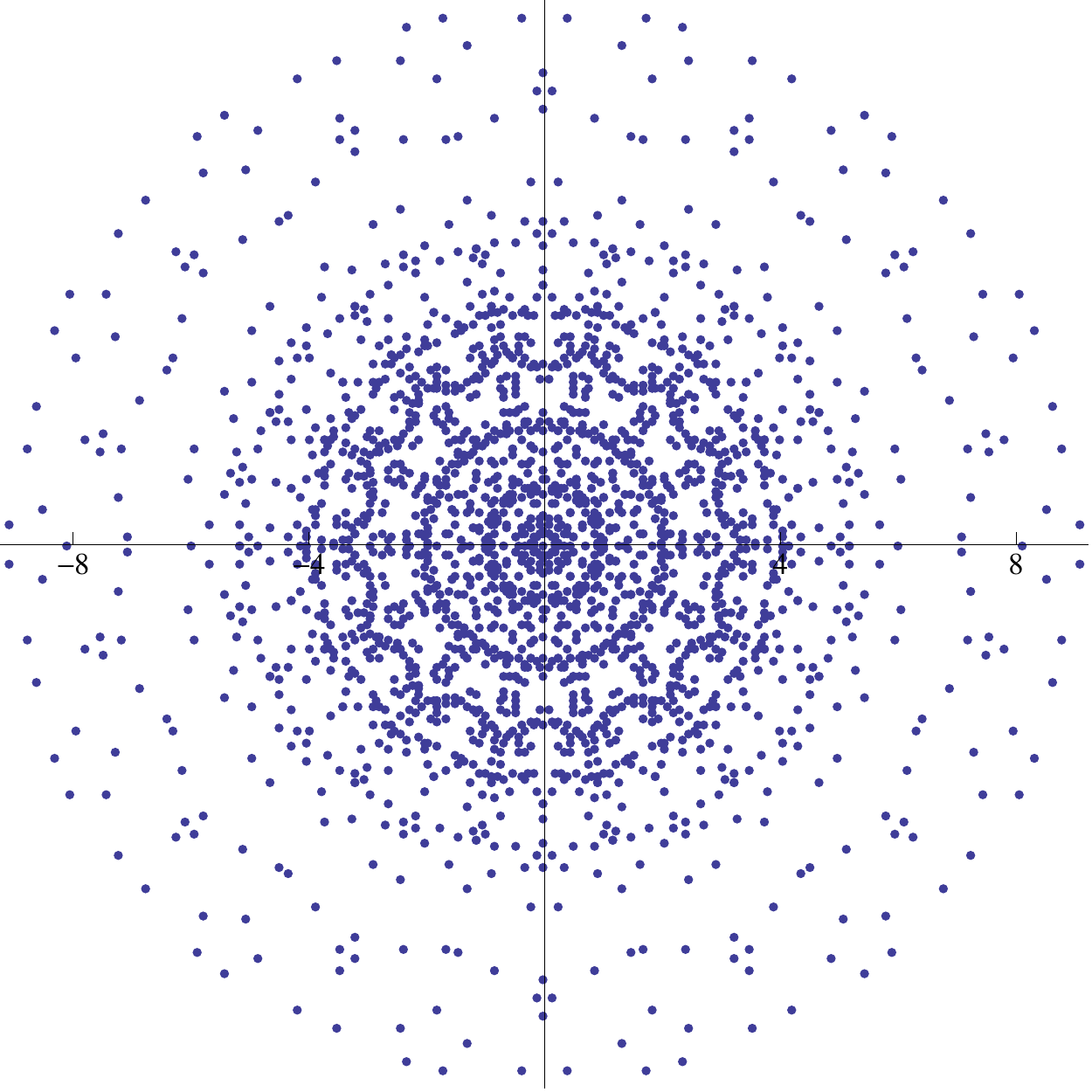}
			\caption{$X = S_4(0,1,6,6)$}
		\end{subfigure}
		\caption{Images of supercharacters $\sigma_X:(\Z/14\Z)^4\to\C$ for various $X$.
		Each plot is symmetric with respect to the real axis.}
		\label{FigureConjugate}
	\end{figure}
	
	\begin{figure}[h]
		\begin{subfigure}{\textwidth}
			\centering
			\includegraphics[width=0.9\textwidth]{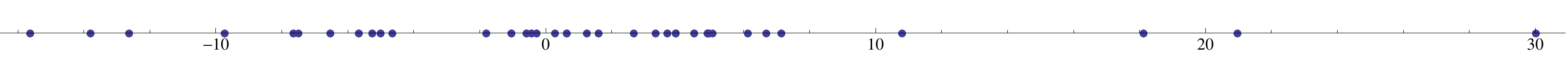}
			\caption{$n=7$, $d=5$, $X=S_5(0,1,1,6,6)$, $|X| = 30$}
		\end{subfigure}
		\begin{subfigure}{\textwidth}
			\centering
			\includegraphics[width=0.9\textwidth]{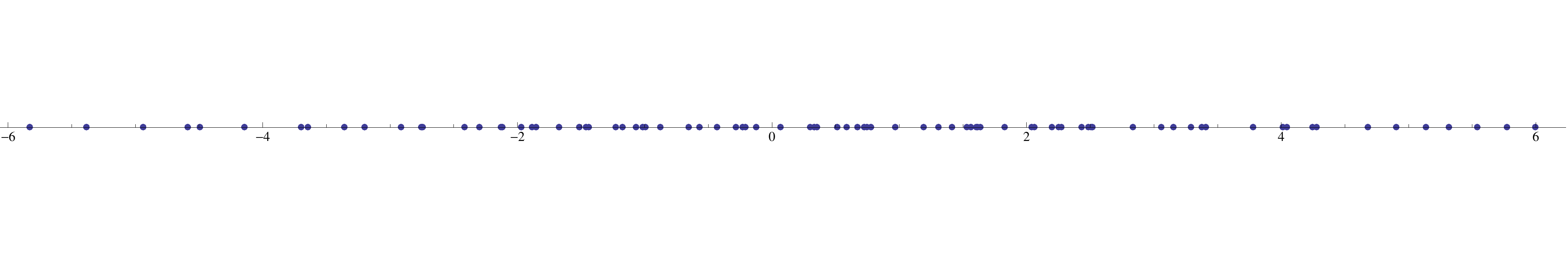}
			\vspace{-30pt}
			\caption{$n=13$, $d=4$, $X=S_4(6,6,7,7)$, $|X|=6$}
		\end{subfigure}
		
		\caption{Images of two real-valued supercharacters.  In each case, the superclass $X$ is closed under negation.}
		\label{FigureReal}
	\end{figure}

\subsection{Dihedral Symmetry}

	Let us introduce several notational conventions which will prove useful in what follows.
	For each $\vec{x}=(x_1,x_2,\ldots,x_d)$ in $(\Z/n\Z)^d$, we let
	\begin{equation*}
		[\vec{x}] := \sum_{\ell =1}^d x_{\ell} \pmod{n},
	\end{equation*}
	observing that $[\vec{x}] = [\pi(\vec{x})]$ for all $\pi$ in $S_d$.  For each superclass $X$
	we may define $[X]$ unambiguously by setting it equal to $[\vec{x}]$ for any representative $\vec{x}$ of $X$.
	We also let $\vec{1} = (1,1,\ldots,1)$ so that
	$X + j \vec{1} =\{\vec{x}+j\vec{1} : \vec{x} \in X\}$ is superclass whenever $X$ is.

	We say that a subset of the complex plane has \emph{$k$-fold dihedral symmetry} if it is invariant
	under the action of the dihedral group of order $2k$.  To establish that a 
	given supercharacter plot enjoys $k$-fold dihedral symmetry it suffices to show that it
	is invariant under rotation through an angle of $\frac{2\pi}{k}$.  The degree of symmetry
	enjoyed by a supercharacter plot is governed by the quantity $[X]$ (see Figure \ref{FigureSymmetry}).
	
	\begin{Proposition}\label{PropositionDihedral}
		The image of a symmetric supercharacter $\sigma_X:(\Z/n\Z)^d\to\C$ has $\frac{n}{(n,[X])}$-fold dihedral symmetry.
	\end{Proposition}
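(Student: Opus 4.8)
The plan is to exploit the translation $\vec{y}\mapsto\vec{y}+\vec{1}$ on the domain, which turns out to act as a pure rotation on the values of $\sigma_X$. Since the image is already known to be symmetric across the real axis by \eqref{eq-ConjugatePairs}, and since the discussion preceding the statement reduces $k$-fold dihedral symmetry to invariance under rotation through $\frac{2\pi}{k}$, it suffices to produce a rotational symmetry of order $\frac{n}{(n,[X])}$.

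First I would compute, for any integer $j$, the effect of translating the argument by $j\vec{1}$. Using the definition \eqref{eq-Supercharacter} and the bilinearity of the dot product,
\[
	\sigma_X(\vec{y}+j\vec{1}) = \sum_{\vec{x}\in X} e\!\left(\frac{\vec{x}\cdot\vec{y}+j\,(\vec{x}\cdot\vec{1})}{n}\right).
\]
The key observation is that $\vec{x}\cdot\vec{1}=[\vec{x}]=[X]$ is constant as $\vec{x}$ ranges over the orbit $X$, so the scalar $e\!\left(\frac{j[X]}{n}\right)$ factors out of the sum and
\[
	\sigma_X(\vec{y}+j\vec{1}) = e\!\left(\frac{j[X]}{n}\right)\sigma_X(\vec{y}).
\]

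Next I would promote this pointwise identity to a statement about the image as a set. Because $\vec{y}\mapsto\vec{y}+\vec{1}$ is a bijection of $(\Z/n\Z)^d$, the image $\{\sigma_X(\vec{y}):\vec{y}\in(\Z/n\Z)^d\}$ is unchanged by reindexing, so it equals $\{\sigma_X(\vec{y}+\vec{1})\}=\omega\,\{\sigma_X(\vec{y})\}$ with $\omega=e\!\left(\frac{[X]}{n}\right)$; that is, the image is invariant under multiplication by $\omega$. Finally I would identify the symmetry this produces: the multiplicative order of $\omega$ is precisely $\frac{n}{(n,[X])}=:k$, so the cyclic group generated by $\omega$ is exactly the group of $k$th roots of unity, which contains $e\!\left(\frac{1}{k}\right)$. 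Hence the image is invariant under rotation through $\frac{2\pi}{k}$, and combined with the reflection symmetry across the real axis this yields $k$-fold dihedral symmetry.

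The argument is at heart a one-line phase computation, so there is no deep obstacle; the only points demanding care are the two bookkeeping steps. The first is confirming that $\vec{x}\cdot\vec{1}$ is genuinely constant on the orbit, which is exactly the already-noted invariance $[\vec{x}]=[\pi(\vec{x})]$ and is what makes the phase factor out cleanly. The second, and the one I expect to be the main (if minor) subtlety, is verifying that the order of $\omega$ is $\frac{n}{(n,[X])}$ and not a proper divisor: writing $[X]=(n,[X])\,a$ with $\gcd(a,k)=1$ exhibits $\omega$ as a \emph{primitive} $k$th root of unity, so the rotation really has full order $k$.
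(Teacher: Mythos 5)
Your proposal is correct and follows essentially the same route as the paper: both rest on the identity $\sigma_X(\vec{y}+\ell\vec{1}) = e\left(\frac{[X]\ell}{n}\right)\sigma_X(\vec{y})$, obtained by factoring out the phase via the orbit-invariance of $[\vec{x}]$, and both then observe that the phases $e\left(\frac{[X]\ell}{n}\right)$ range over exactly the $\frac{n}{(n,[X])}$-th roots of unity (you verify this by computing the order of $\omega$; the paper phrases it as solvability of the congruence $[X]\ell\equiv b\pmod{n}$, which is the same fact). Your explicit remarks that translation by $\vec{1}$ is a bijection of the domain and that $\omega$ is a \emph{primitive} root of unity are careful touches the paper leaves implicit, but they do not constitute a different argument.
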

	
	\begin{proof}
		First observe that
		\begin{equation*}
			\sigma_X(\vec{y} + \ell\vec{1}) 
			= \sum_{\vec{x}\in X} e\left( \frac{\vec{x}\cdot(\vec{y}+\ell\vec{1})}{n}\right) 
			= e\left( \frac{[\vec{x}]\ell}{n} \right)\sum_{\vec{x}\in X}  e\left( \frac{\vec{x}\cdot\vec{y}}{n}\right) 
			= e\left( \frac{[X]\ell}{n} \right) \sigma_X(\vec{y}).
		\end{equation*}
		Since the congruence $[X]\ell \equiv b \pmod{n}$ is solvable if and only if $(n,[X])$ divides $b$, it follows that 
		the image of $\sigma_X$ has $\frac{n}{(n,[X])}$-fold dihedral symmetry. 
	\end{proof}

	\begin{figure}[h]
		\begin{subfigure}{0.3\textwidth}
			\centering
			\includegraphics[width=\textwidth]{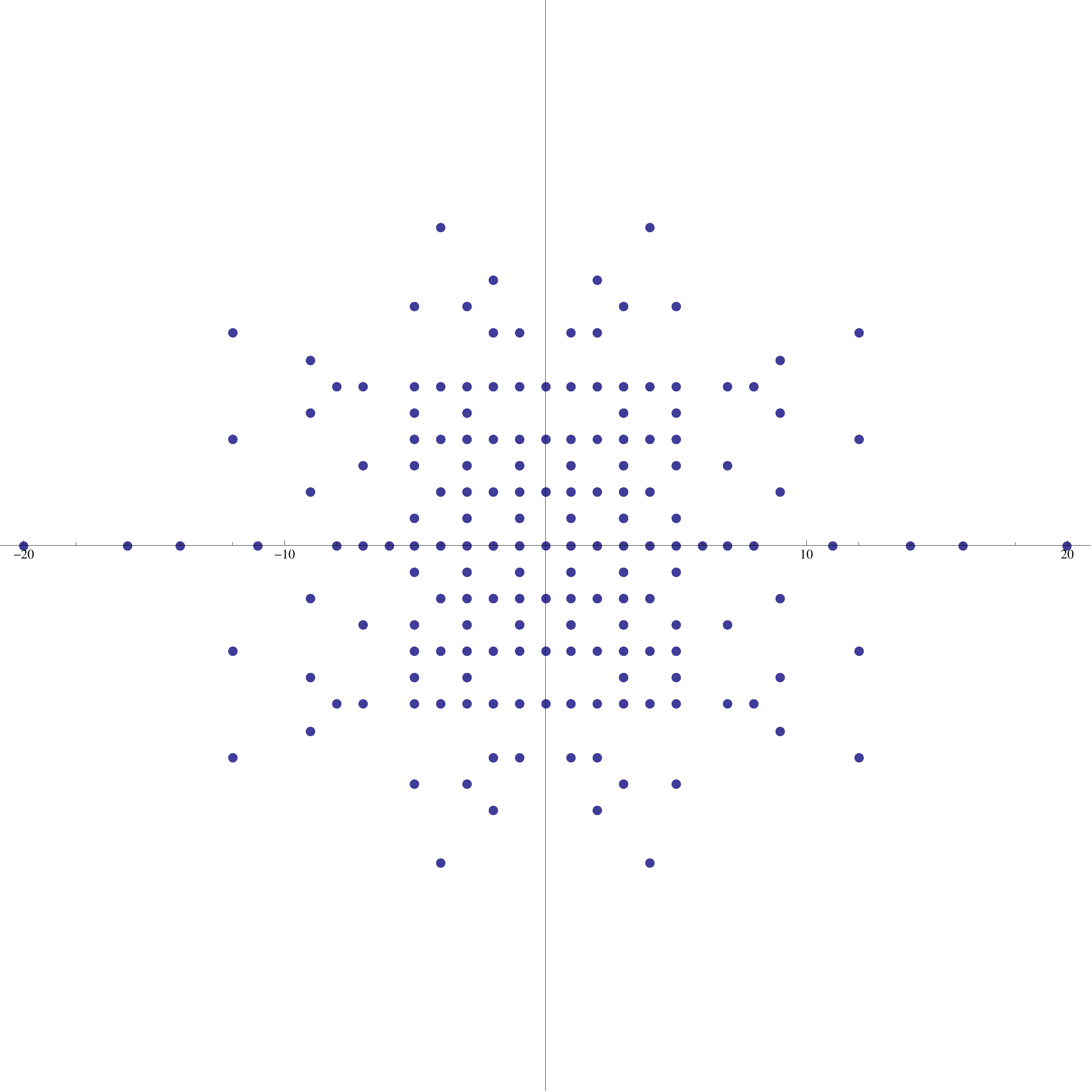}
			\caption{$X=S_5(0,0,0,1,5)$}
		\end{subfigure}
		\quad
		\begin{subfigure}{0.3\textwidth}
			\centering
			\includegraphics[width=\textwidth]{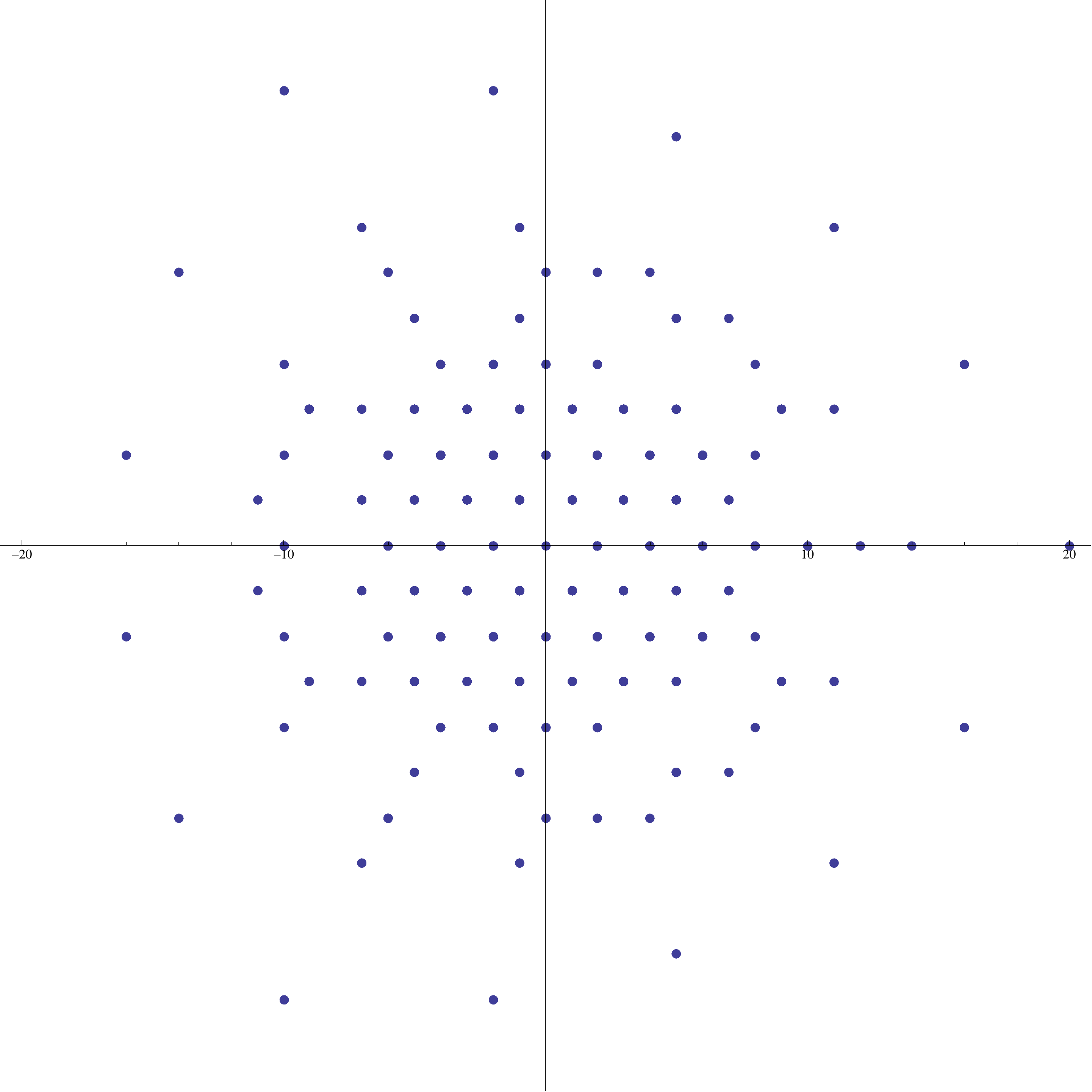}
			\caption{$X=S_5(0,0,0,1,7)$}
		\end{subfigure}
		\quad
		\begin{subfigure}{0.3\textwidth}
			\centering
			\includegraphics[width=\textwidth]{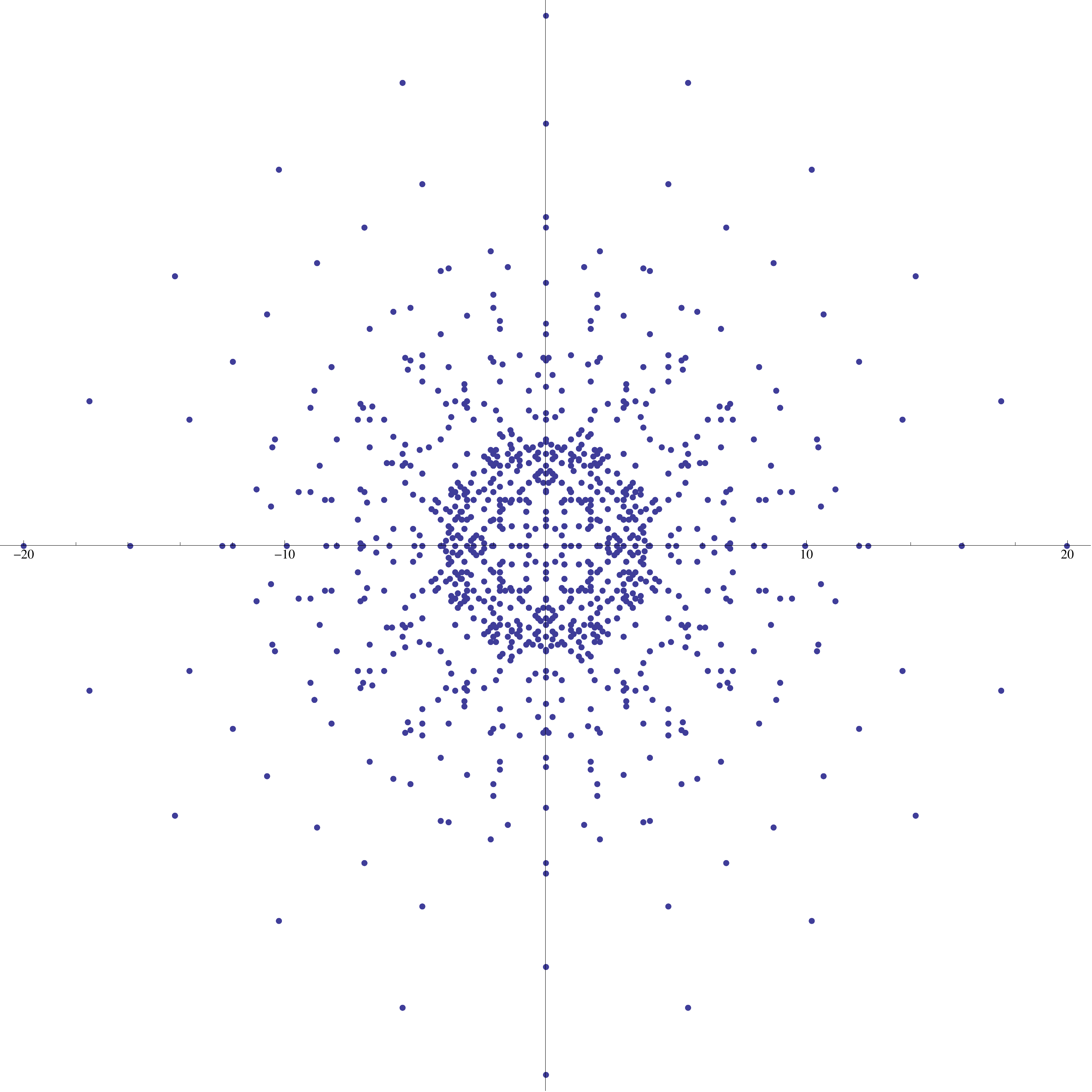}
			\caption{$X=S_5(0,0,0,1,2)$}
		\end{subfigure}
		\bigskip

		\begin{subfigure}{0.3\textwidth}
			\centering
			\includegraphics[width=\textwidth]{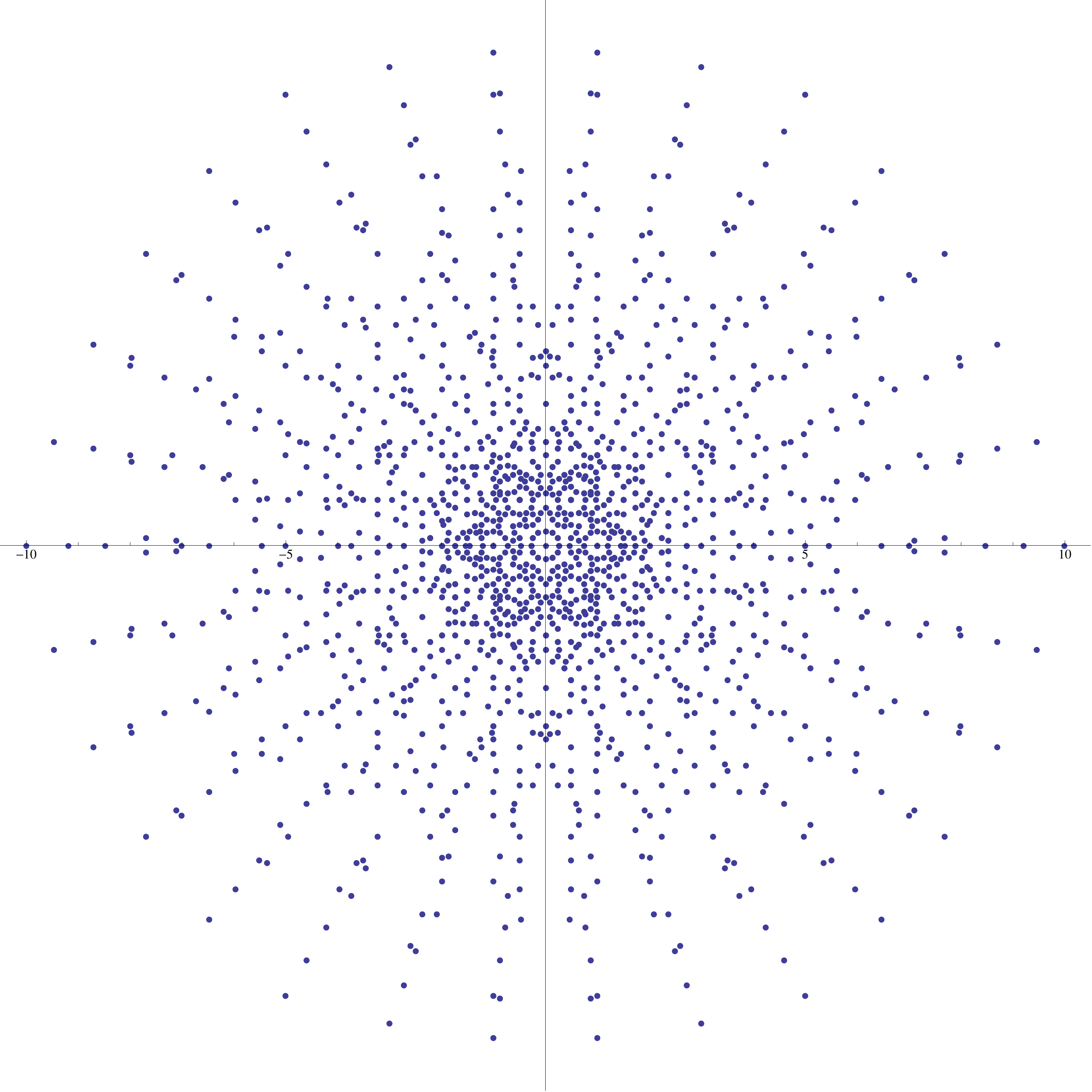}
			\caption{$X=S_5(0,0,0,1,1)$}
		\end{subfigure}
		\quad
		\begin{subfigure}{0.3\textwidth}
			\centering
			\includegraphics[width=\textwidth]{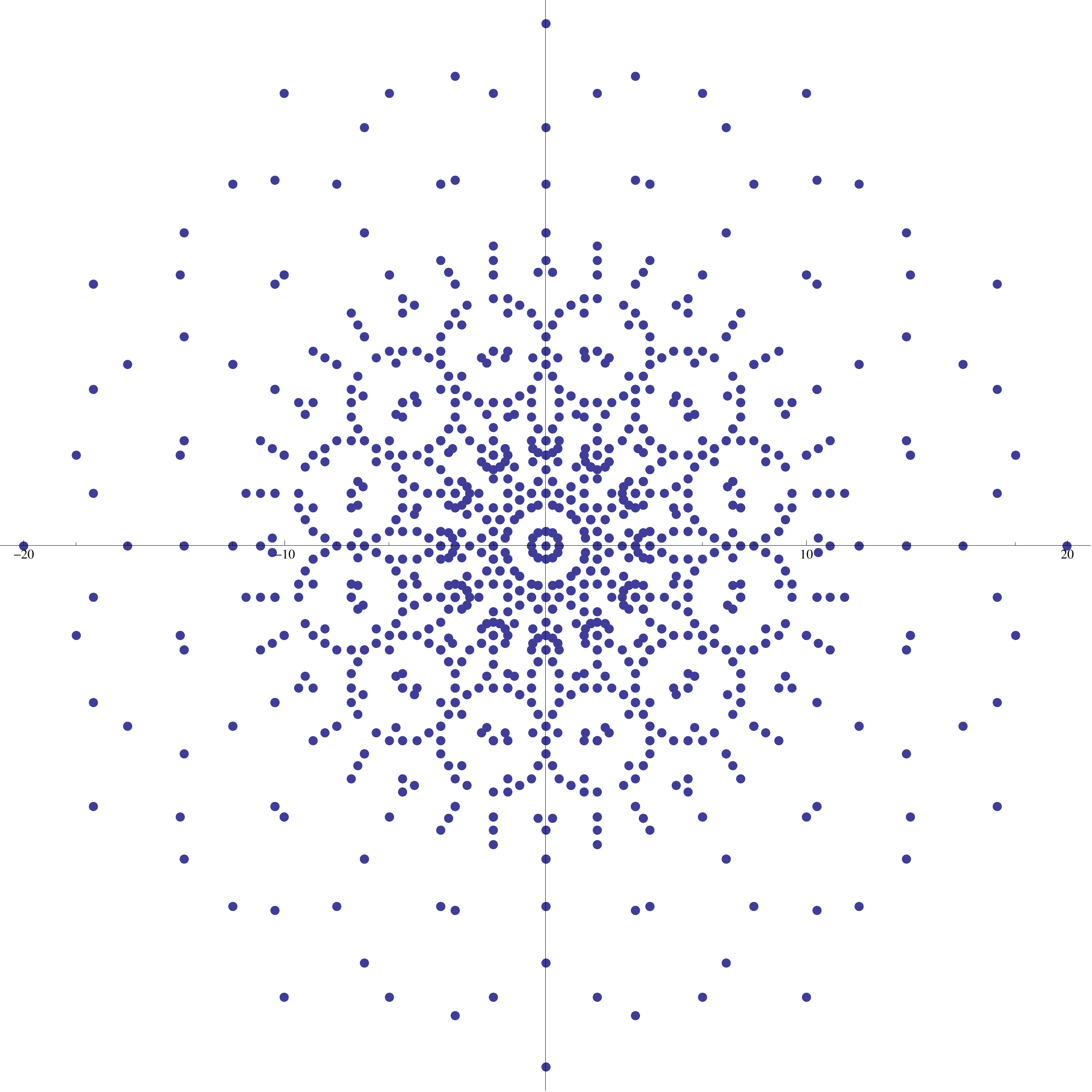}
			\caption{$X=S_5(0,0,0,1,6)$}
		\end{subfigure}
		\quad
		\begin{subfigure}{0.3\textwidth}
			\centering
			\includegraphics[width=\textwidth]{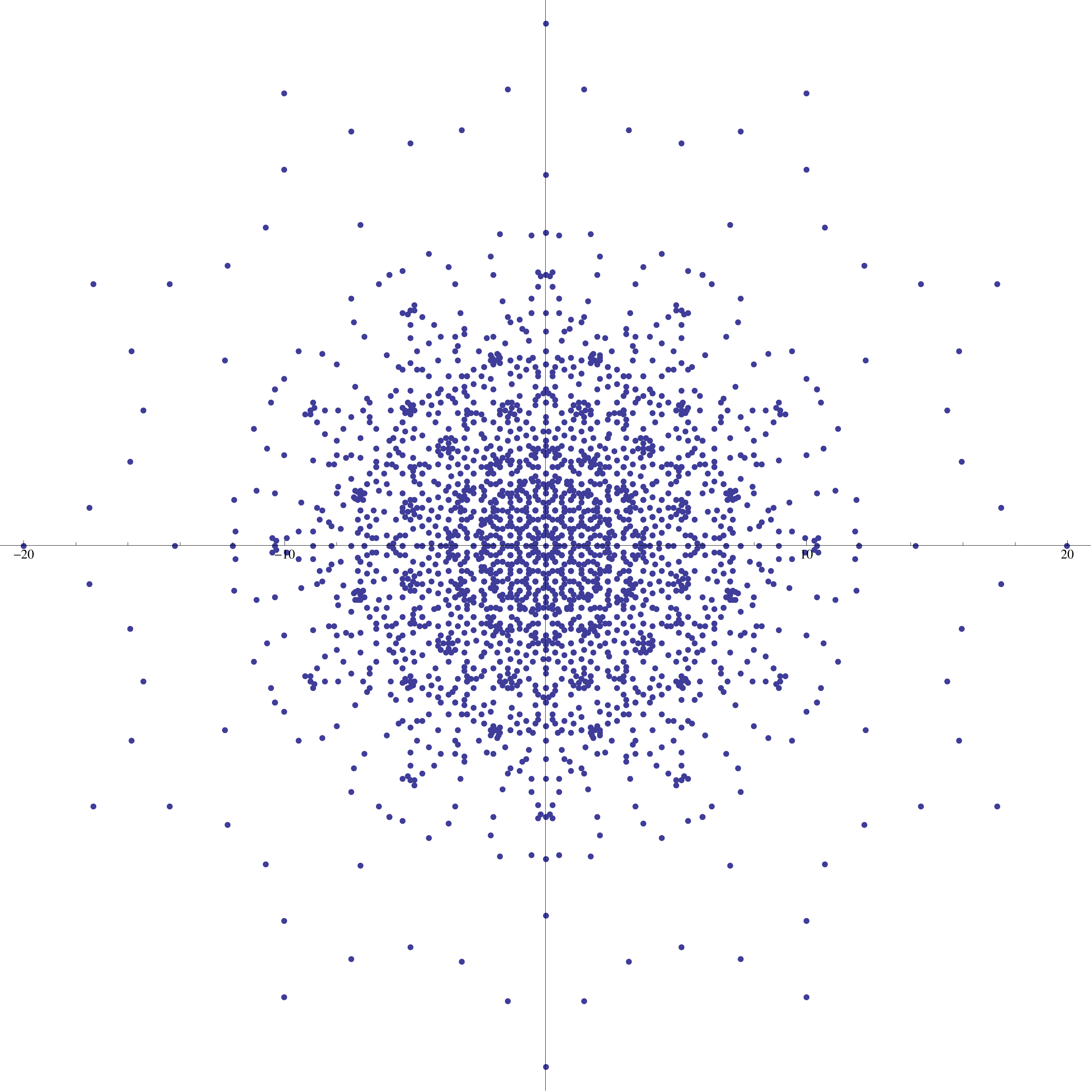}
			\caption{$X\!=\!S_5(0,0,0,1,10)$}
		\end{subfigure}
		
		\caption{Supercharacter plots $\sigma_X:(\Z/n\Z)^d\to\C$ corresponding to $n = 12$ and $d = 5$
		for various $X$.  In accordance with Proposition \ref{PropositionDihedral}, each image enjoys
		$\frac{12}{(12,[X])}$-fold dihedral symmetry.}
		\label{FigureSymmetry}
	\end{figure}
	
	Although individual supercharacter plots may have only $D_2$ symmetry
	(e.g., Figure \ref{FigureReal}), when one considers the images of all symmetric supercharacters on $(\Z/n\Z)^d$ simultaneously,
	one obtains a much higher degree of symmetry (see Figure \ref{FigureFull}).

	\begin{Proposition}\label{PropositionFull}
		For $n,d$ fixed, the union
		\begin{equation*}
			\bigcup_{X \in \X} \sigma_X\big( (\Z/n\Z)^d \big)
		\end{equation*}
		of all symmetric supercharacter plots on $(\Z/n\Z)^d$ has $\frac{n}{(n,d)}$-fold dihedral symmetry.
	\end{Proposition}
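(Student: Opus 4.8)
The plan is to prove that the union $U:=\bigcup_{X\in\X}\sigma_X\big((\Z/n\Z)^d\big)$ is carried to itself by multiplication by $\omega:=e(d/n)$, a primitive root of unity of order $\frac{n}{(n,d)}$; the cyclic group $\langle\omega\rangle$ is exactly the group of rotations realizing $\frac{n}{(n,d)}$-fold symmetry, and since $U$ is automatically symmetric about the real axis---by \eqref{eq-ConjugatePairs}, $\overline{\sigma_X(\vec y)}=\sigma_X(-\vec y)\in U$---invariance under $\omega$ upgrades to full $\frac{n}{(n,d)}$-fold dihedral symmetry exactly as in the discussion preceding Proposition~\ref{PropositionDihedral}. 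Because $U$ is finite and rotation is injective, it suffices to prove the inclusion $\omega U\subseteq U$.

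First I would record a two-sided refinement of the identity used to prove Proposition~\ref{PropositionDihedral}. Expanding the dot product $(\vec x+j\vec 1)\cdot(\vec y+\ell\vec 1)$ and using $[\vec x]=[X]$ for $\vec x\in X$ gives, for all $j,\ell\in\Z$,
\begin{equation*}
	\sigma_{X+j\vec 1}(\vec y+\ell\vec 1)=e\!\left(\frac{j[\vec y]+\ell[X]+j\ell d}{n}\right)\sigma_X(\vec y).
\end{equation*}
The left-hand side always belongs to $U$, and it equals $\omega\,\sigma_X(\vec y)$ exactly when $j[\vec y]+\ell[X]+j\ell d\equiv d\pmod n$. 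Thus the inclusion $\omega U\subseteq U$ follows once I prove the purely arithmetic claim that, for every $a,c\in\Z/n\Z$, there exist integers $j,\ell$ with
\begin{equation*}
	aj+c\ell+dj\ell\equiv d\pmod n;
\end{equation*}
one then applies this with $a=[\vec y]$ and $c=[X]$.

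To prove this congruence lemma I would work one prime at a time and reassemble via the Chinese Remainder Theorem, so fix the exact prime-power divisor $p^e$ of $n$ and write $v_p$ for the $p$-adic valuation. Fixing $\ell$ leaves the linear congruence $j(a+d\ell)\equiv d-c\ell\pmod{p^e}$, which is solvable in $j$ whenever $\gcd(p^e,a+d\ell)$ divides $d-c\ell$. If $p\nmid d$, then $a+d\ell$ ranges over all residues as $\ell$ varies, so I choose $\ell$ making $a+d\ell$ a unit and solve at once. If $p\mid d$, put $s=v_p(d)$: if $s\ge e$ the target is $0$ and $j=\ell=0$ works; if $v_p(a)<s$ (respectively $v_p(c)<s$) the choice $\ell=0$ (respectively $j=0$) reduces to a solvable one-variable congruence; and if $v_p(a),v_p(c)\ge s$ I divide the congruence through by $p^s$ to land back in the already-settled coprime case modulo $p^{e-s}$.

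The step I expect to demand the most care is this lemma in the case $p\mid d$. Here the bilinear term $j\ell d$ is genuinely indispensable: at \emph{doubly degenerate} data, where both $[\vec y]$ and $[X]$ are divisible by the ambient prime powers, neither single shift $X\mapsto X+j\vec 1$ nor $\vec y\mapsto\vec y+\ell\vec 1$ alone can manufacture the rotation $\omega$, and only the cross term closes the gap---this is why the one-parameter argument behind Proposition~\ref{PropositionDihedral} does not suffice and a two-parameter family of shifts is forced upon us. Once the lemma is secured, $\omega U\subseteq U$ and hence the dihedral conclusion follow immediately.
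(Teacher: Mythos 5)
Your proposal is correct and takes essentially the same route as the paper: the identical two-parameter identity $\sigma_{X+j\vec{1}}(\vec{y}+\ell\vec{1})=e\left(\frac{j[\vec{y}]+\ell[X]+dj\ell}{n}\right)\sigma_X(\vec{y})$, reduced to solving the bilinear congruence $aj+c\ell+dj\ell\equiv m\pmod{n}$ by the Chinese Remainder Theorem with a prime-power case analysis that isolates the unit-coefficient cases and divides out a common $p$-power in the doubly degenerate case. The only cosmetic difference is that you target $m=d$ (multiplier $e(d/n)$) while the paper targets $m=(n,d)$; since both are primitive $\frac{n}{(n,d)}$-th roots of unity they generate the same rotation group, so the arguments are interchangeable.
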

	
	\begin{proof}
		Fixing $n$ and $d$, we first note that
		\begin{align}
			\sigma_{(X+j\vec{1})}(Y+k\vec{1}) 
			&= \sum_{\vec{x} \in X} e\left(\frac{ (\vec{x}+j\vec{1}) \cdot (\vec{y} + k\vec{1})}{n}\right) \nonumber \\
			&= \sum_{\vec{x} \in X} e\left(\frac{ \vec{x \cdot y} + [\vec{y}] j + [\vec{x}] k + djk}{n}\right) \nonumber \\
			&= e\left(\frac{[Y] j + [X] k + djk}{n}\right) \sigma_X(Y). \label{eq-xyjk}
		\end{align}
		We therefore wish to identify the smallest positive $m$ for which the congruence
		\begin{equation}\label{eq-abdjk}
			aj+bk+djk \equiv m \pmod{n}
		\end{equation}
		can be solved for $j$ and $k$ given any prescribed values of $a=[X]$ and $b=[Y]$.  Setting $a = b = 0$ reveals that $m \geq (n,d)$.
		In fact, we wish to show that \eqref{eq-abdjk} can always be solved when $m=(n,d)$.

		By the Chinese Remainder Theorem, it suffices to show that
		\begin{equation}\label{eq-abdpl}
			aj+bk+djk \equiv (n,d) \pmod{p^\ell}
		\end{equation}
		can be solved for each prime power $p^{\ell}$ which appears in the canonical factorization of $n$.  
		There are three special cases which are easy to dispatch:		
		\begin{itemize}\addtolength{\itemsep}{0.25\baselineskip}
			\item If $(a,p^{\ell})=1$, then let $j=a^{-1}(n,d)$ and $k=0$.
			\item If $(b,p^{\ell})=1$, then let $j=0$ and $k=b^{-1}(n,d)$.
			\item If $(d,p^{\ell})=1$, then let $j=d^{-1}(1-b)$ and $k=(n,d)-aj$.
		\end{itemize}
		We may therefore assume that 
		$a=p^\alpha a'$, $b=p^\beta b'$, and $d = p^\delta d'$, where $(a',p)=(b',p)=(d',p)=1$ and $\alpha, \beta, \delta > 0$. 
		This leads to two special cases:
		\begin{itemize}\addtolength{\itemsep}{0.25\baselineskip}
			\item If $\delta \geq \ell$, then both $d$ and $(n,d)$ are divisible by $p^\ell$ so that \eqref{eq-abdpl} becomes
				\begin{equation*}
					aj+bk+0 \equiv 0 \pmod{p^\ell},
				\end{equation*}
				which has the solution $j=k=0$. 
			\item If $1 \leq \delta < \ell$, then let $\mu = \text{min}\{\alpha,\beta,\delta\}$ and observe that 
				$1 \leq \mu < \ell$.  It follows from \eqref{eq-abdpl} that
				\begin{equation}\label{eq-scrb}
					\frac{a}{p^{\mu}} j + \frac{b}{p^{\mu}} k + \frac{d}{p^{\mu}}jk
					\equiv \Big( \frac{n}{p^{\mu}}, \frac{d}{p^{\mu}} \Big) \pmod{ p^{\ell - \mu}}
				\end{equation}
				By our choice of $\mu$, at least one of $p^{\alpha-\mu}$, $p^{\beta-\mu}$, and $p^{\delta-\mu}$ must equal $1$.
				In other words, at least one of the coefficients in \eqref{eq-scrb} is relatively prime to the modulus $p^{\ell-\mu}$ of
				the congruence.  In light of the three special cases considered above, we conclude that
				\eqref{eq-scrb}, and hence \eqref{eq-abdpl}, is solvable.
		\end{itemize}
		Putting this all together, we see that when $m = (n,d)$, the congruence \eqref{eq-abdjk} can be solved for $j$ and $k$,
		given any fixed values of $a$ and $b$.  This concludes the proof.
	\end{proof}

	\begin{figure}[h]
		\begin{subfigure}{0.3\textwidth}
			\centering
			\includegraphics[width=\textwidth]{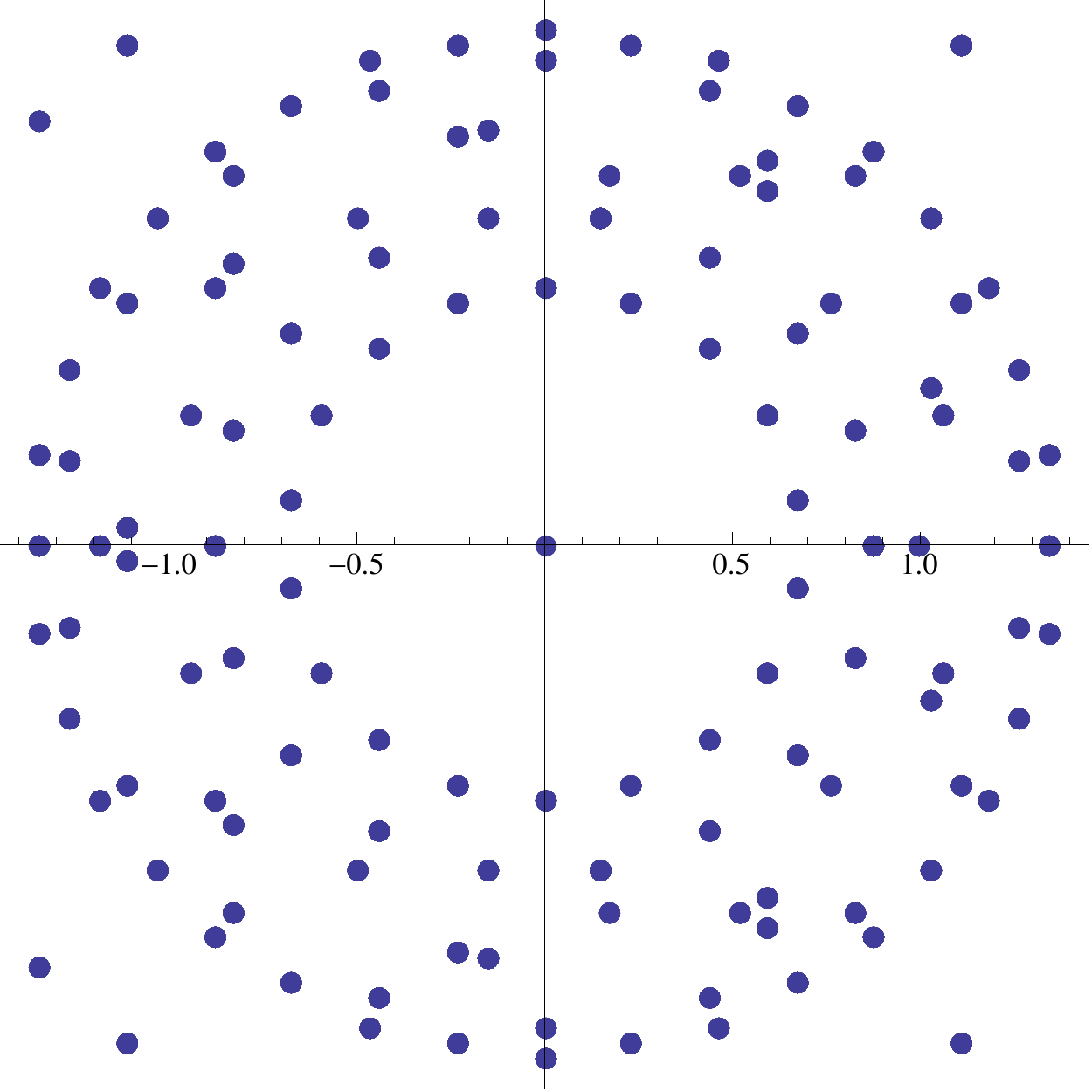}
			\caption{$n=9$, $d=3$, $\frac{9}{(9,3)} = 3$}
		\end{subfigure}
		\quad
		\begin{subfigure}{0.3\textwidth}
			\centering
			\includegraphics[width=\textwidth]{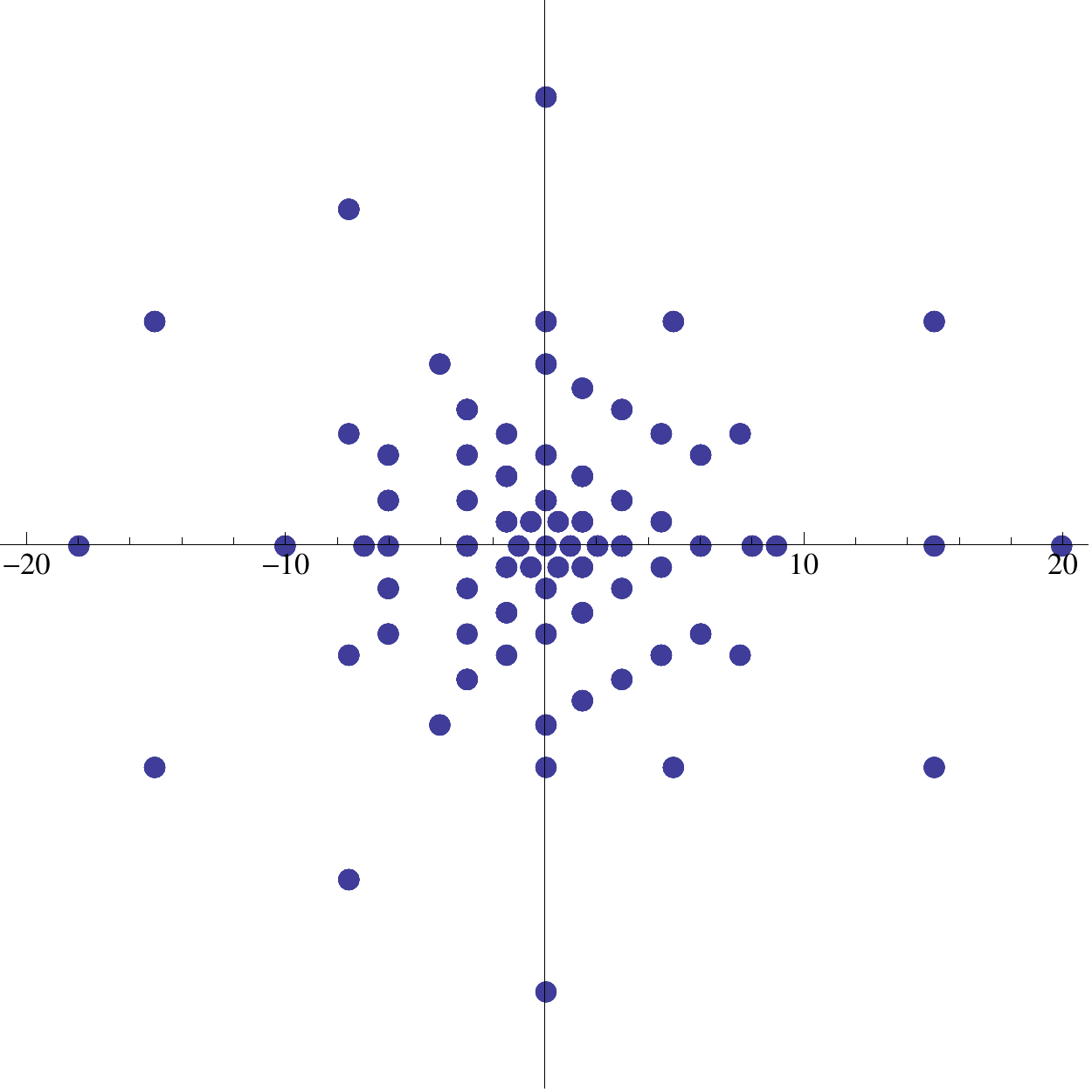}
			\caption{$n=3$, $d=6$, $\frac{3}{(3,6)} = 1$}
		\end{subfigure}
		\quad
		\begin{subfigure}{0.3\textwidth}
			\centering
			\includegraphics[width=\textwidth]{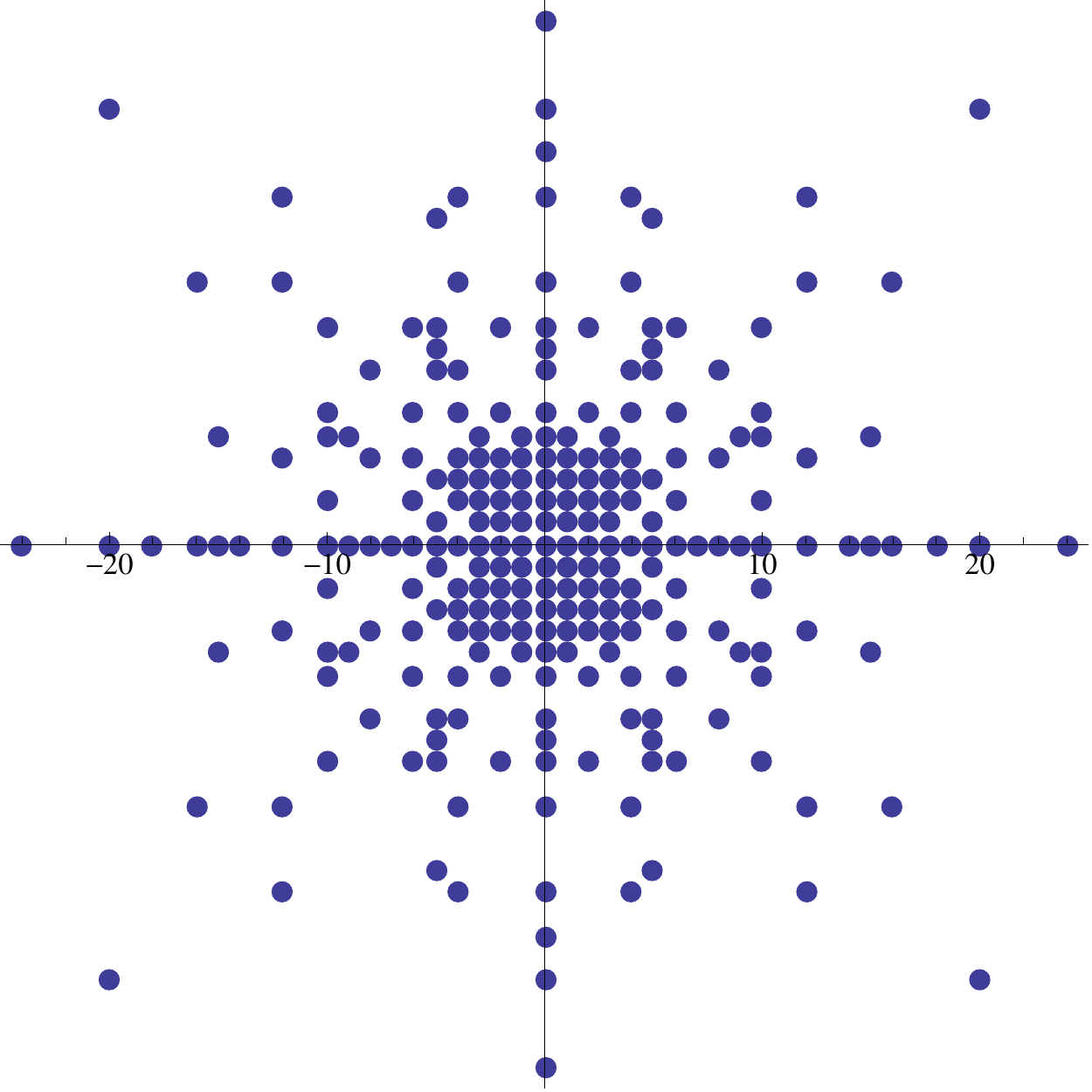}
			\caption{$n=4$, $d=6$, $\frac{4}{(4,6)}=2$}
		\end{subfigure}
		\caption{Examples of the sets $\bigcup_{X \in \X} \sigma_X\big( (\Z/n\Z)^d \big)$ for various $n$ and $d$.
		In accordance with Proposition \ref{PropositionFull}, these plots exhibit $\frac{n}{(n,d)}$-fold dihedral symmetry.}
		\label{FigureFull}
	\end{figure}

\subsection{Restricted walks}
	The simplest nontrivial symmetric supercharacters for which a nice qualitative 
	description exists are those of the form
	$\sigma_X:(\Z/n\Z)^d \to \C$ where $X= S_d(0,0,\ldots,0,1)$.
	Writing $\vec{y} = (y_1,y_2, \ldots, y_d)$, we find that
	\begin{equation}\label{eq-ZS}
		\sigma_X(\vec{y}) = \sum_{i=1}^d e\left(\frac{y_i}{n}\right),
	\end{equation}
	so that the values attained by $\sigma_X$ coincide with the endpoints of all $d$-step
	walks starting at the origin and having steps of unit length which are restricted to the $n$ directions
	$e(\frac{j}{n})$ for $j=1,2,\ldots,n$.
	
	The ``boundary'' of the image $\sigma_X( (\Z/n\Z)^d )$ is easy to describe.
	The furthest from the origin one can get in $d$ steps are the $n$ points $de(\frac{j}{n})$,
	which correspond to taking all $d$ steps in the same direction.  Also accessible to 
	our walker are the points $(d-\ell) e( \frac{j}{n} ) + \ell e ( \frac{j+1}{n})$
	for $0 \leq \ell \leq n$ which lie along the sides of the regular $n$-gon determined by the
	vertices $de(\frac{j}{n})$.  There is also an iterative interpretation of \eqref{eq-ZS}, 
	for we can also think of our $d$-step walk as being the composition of a $m$-step walk followed by a $(d-m)$-step walk
	(see Figure \ref{FigureRW}).
	\begin{figure}[htb!]
		\begin{subfigure}{0.4\textwidth}
			\centering
			\includegraphics[width=\textwidth]{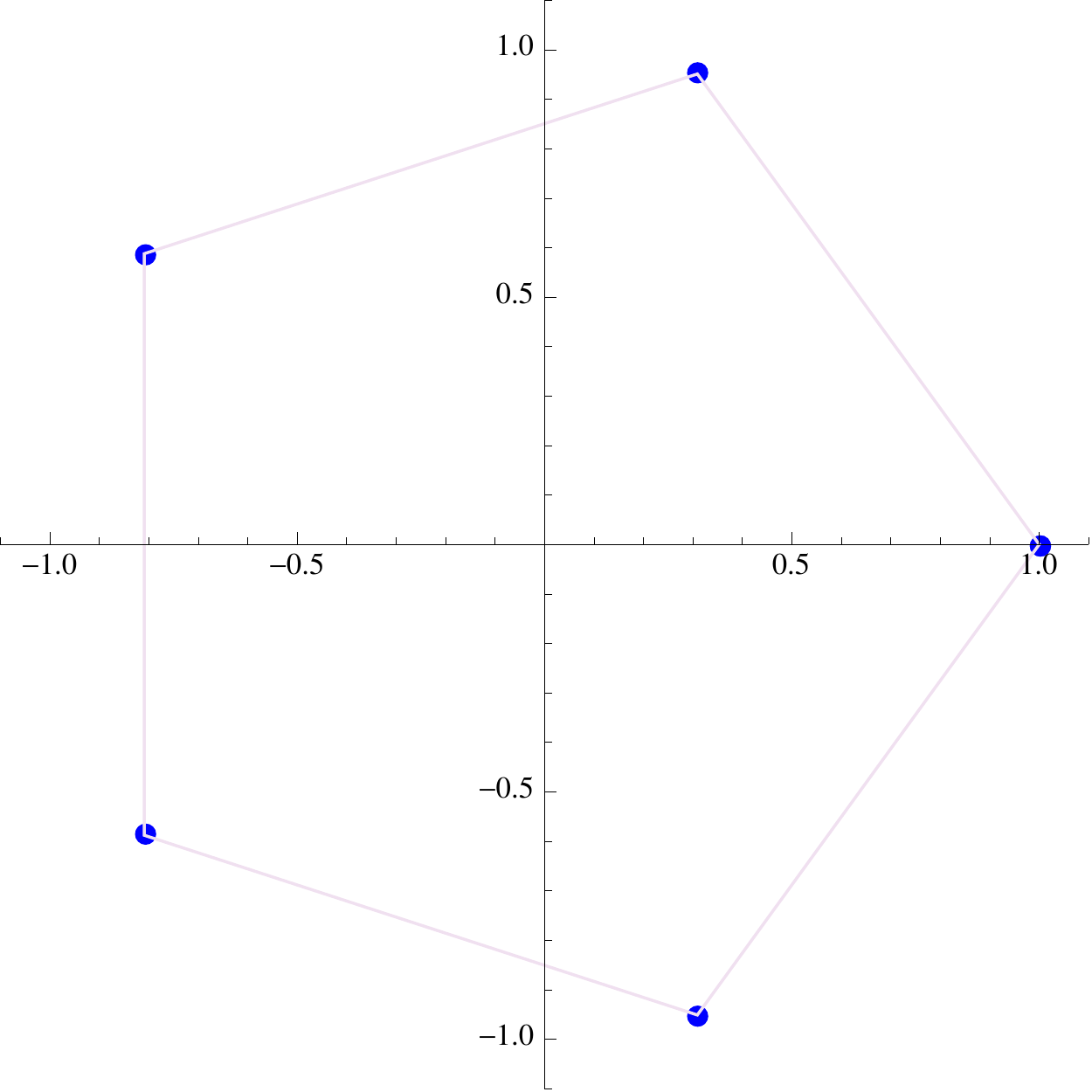}
			\caption{\bf $d=1$, $X = \{1\}$}
		\end{subfigure}
		\qquad
		\begin{subfigure}{0.4\textwidth}
			\centering
			\includegraphics[width=\textwidth]{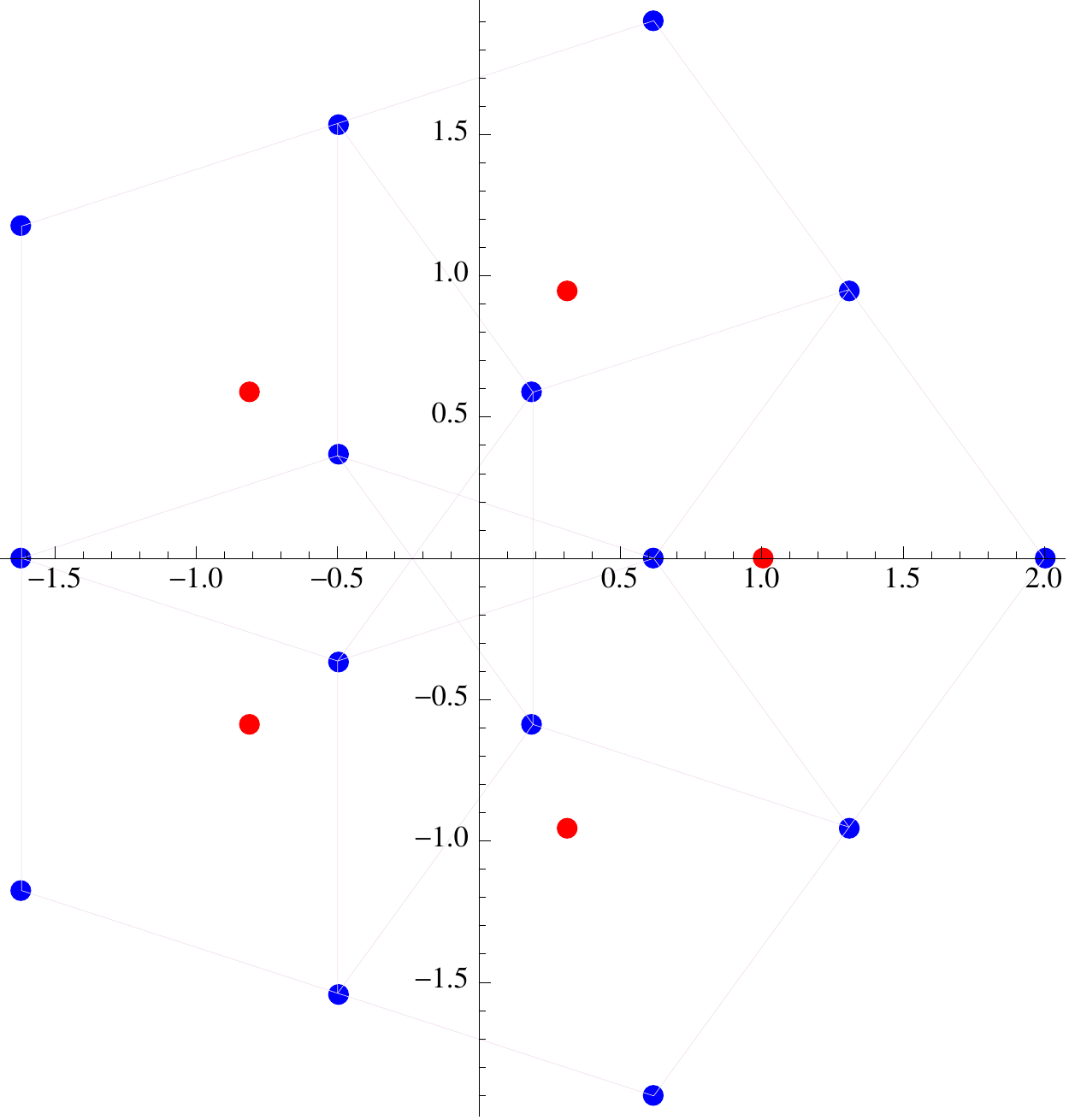}
			\caption{$d=2$, $X= S_2(0,1)$}
		\end{subfigure}
		\bigskip
		
		\begin{subfigure}{0.4\textwidth}
			\centering
			\includegraphics[width=\textwidth]{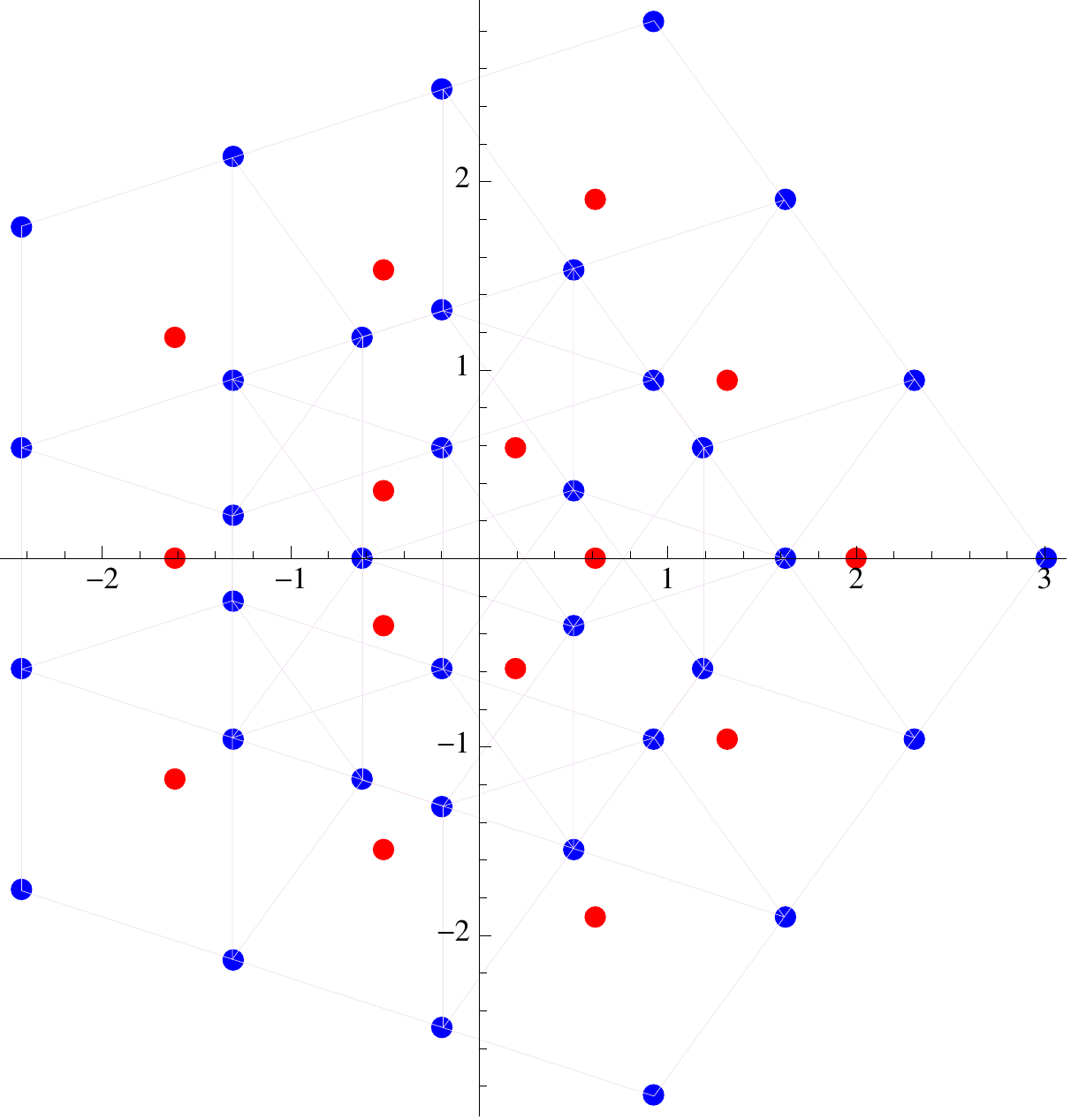}
			\caption{$d=3$, $X = S_3(0,0,1)$}
		\end{subfigure}
		\qquad
		\begin{subfigure}{0.4\textwidth}
			\centering
			\includegraphics[width=\textwidth]{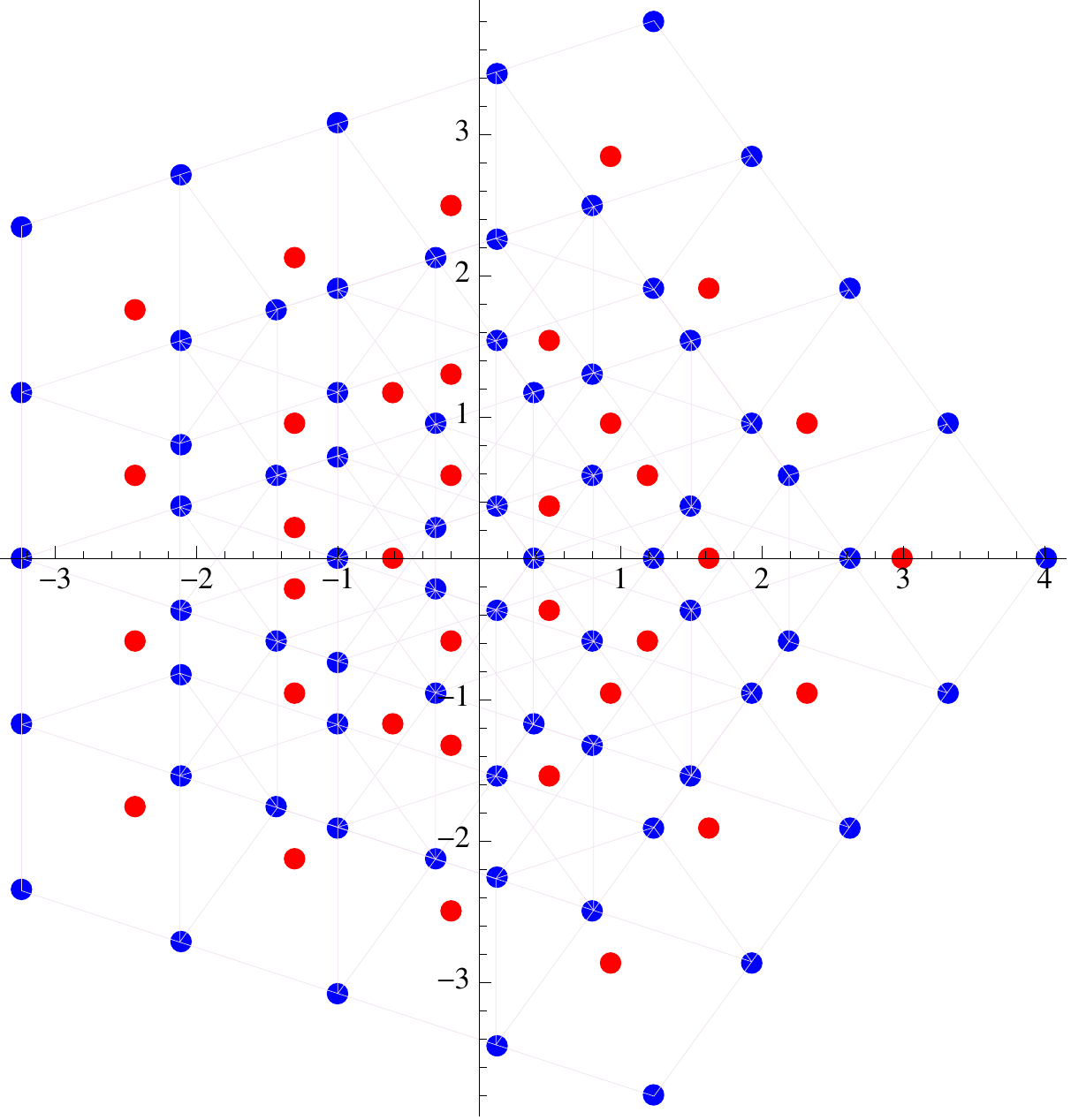}
			\caption{$d=4$, $X = S_4(0,0,0,1)$}
		\end{subfigure}
		\caption{The images $\sigma_X:( \Z/5\Z)^d\to\C$ for $d = 1,2,3,4$ (best viewed in color) illustrate two principles.
		Each successive image is obtained by placing copies of the image corresponding to $d=1$ (i.e., the vertices of the
		regular pentagon) centered at each point of the preceding image.
		Alternatively, the $d$th image can be constructed from $5$ copies of $(d-1)$st image, each centered
		at the $5$th roots of unity. }
		\label{FigureRW}
	\end{figure}

	A similar treatment exists for the supercharacters $\sigma_{S_d(0,0,\ldots,0,a)}:(\Z/n\Z)^d\to\C$.  
	If $(a,n) = 1$, then a glance at \eqref{eq-ZS} indicates that the image is identical to the image
	corresponding to $X = S_d(0,0,\ldots,0,1)$.  On the other hand, if $n = (n,a)r$ where $(n,r)=1$, then the image obtained is identical
	to the image of the supercharacter $\sigma_{S_d(0,0,\ldots,0,1)} : (\Z/r\Z)^d\to\C$.  This corresponds to a $d$-step
	walk with directions restricted to $r = n/(n,a)$ directions (see Figure \ref{FigureRRW}).
	\begin{figure}[htb!]
		\begin{subfigure}{0.3\textwidth}
			\centering
			\includegraphics[width=\textwidth]{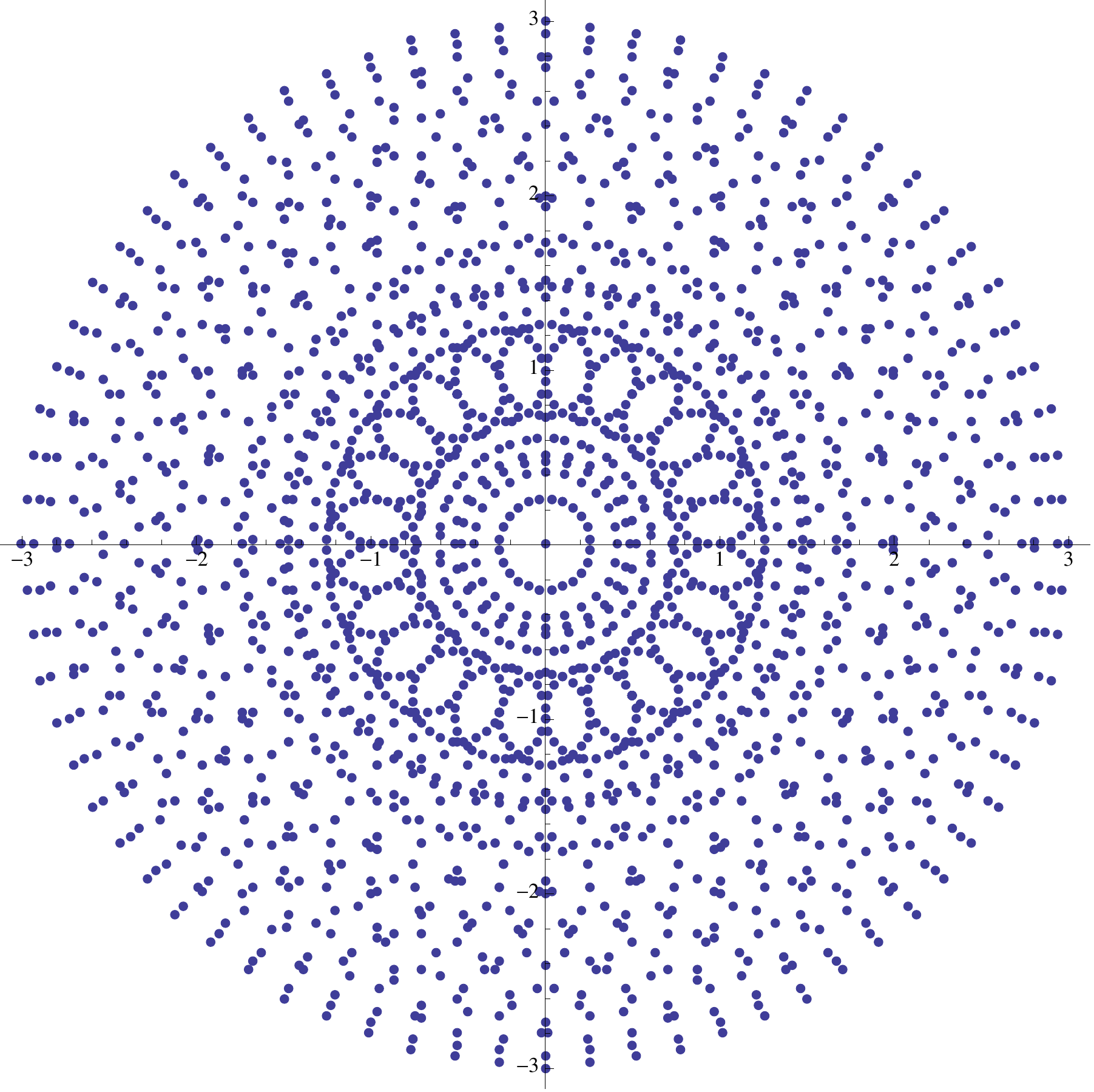}
			\caption{$X=S_3(0,0,1)$}
		\end{subfigure}
		\quad
		\begin{subfigure}{0.3\textwidth}
			\centering
			\includegraphics[width=\textwidth]{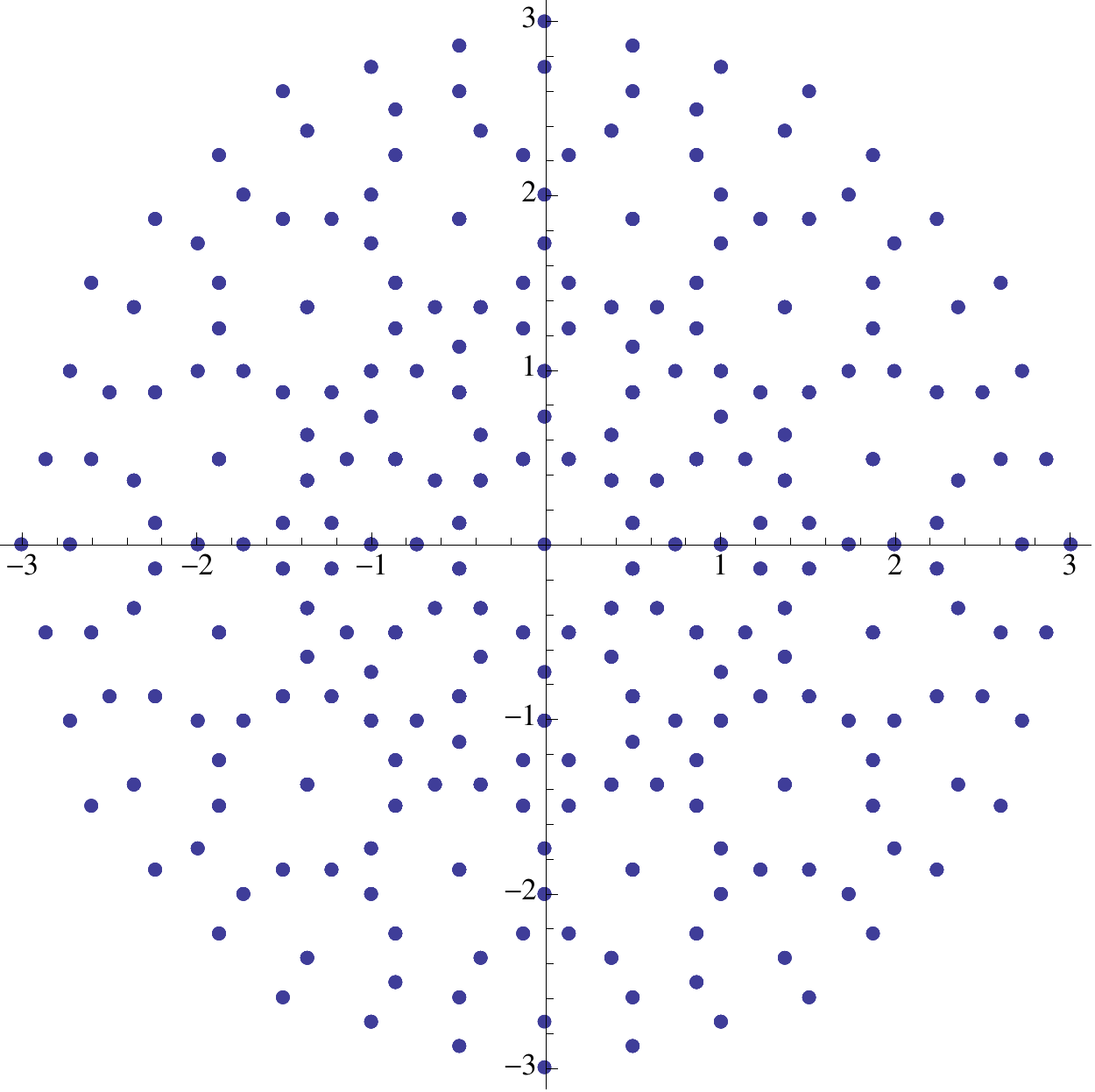}
			\caption{$X=S_3(0,0,2)$}
		\end{subfigure}
		\quad
		\begin{subfigure}{0.3\textwidth}
			\centering
			\includegraphics[width=\textwidth]{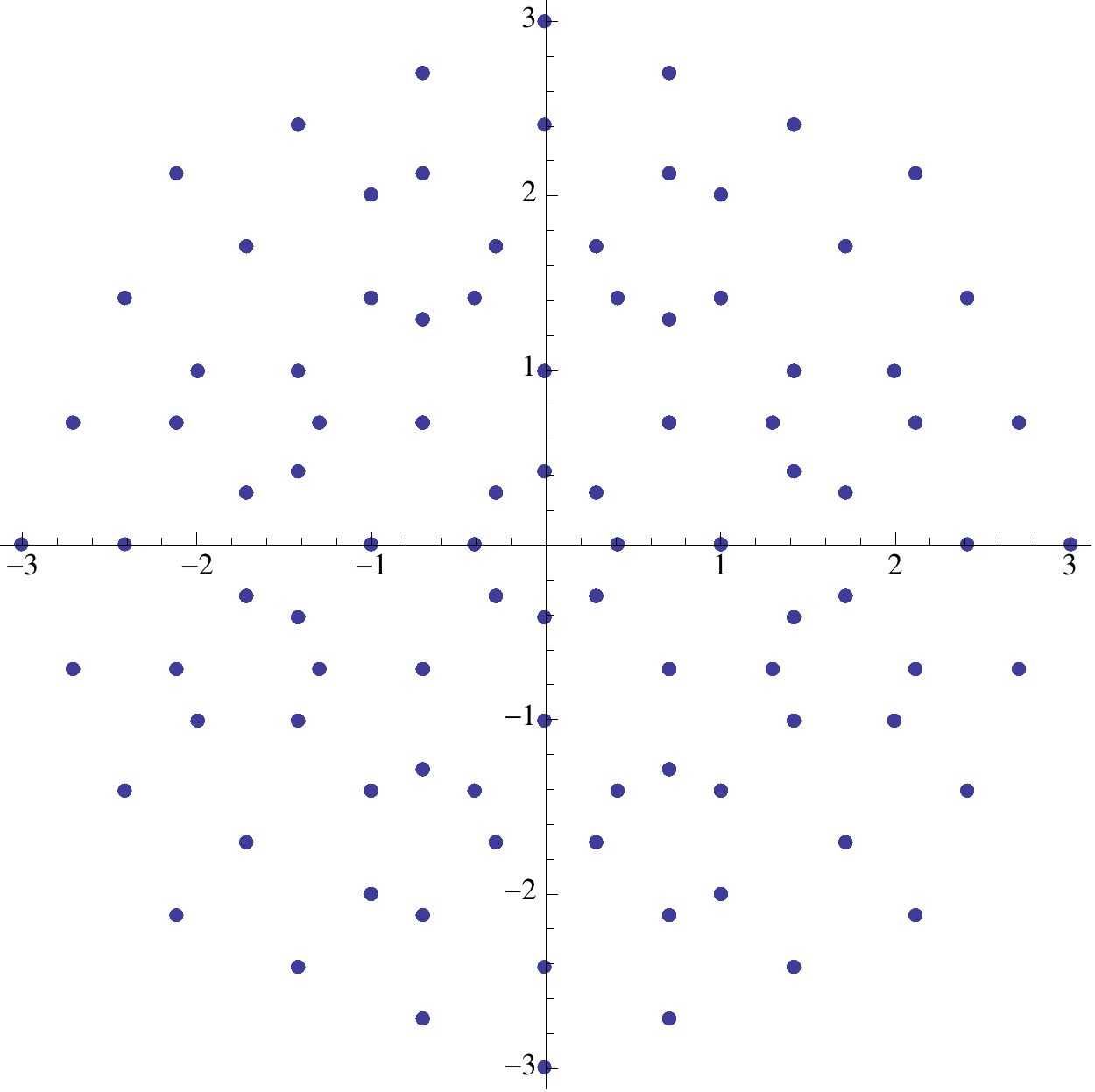}
			\caption{$X=S_3(0,0,3)$}
		\end{subfigure}
		\bigskip

		\begin{subfigure}{0.3\textwidth}
			\centering
			\includegraphics[width=\textwidth]{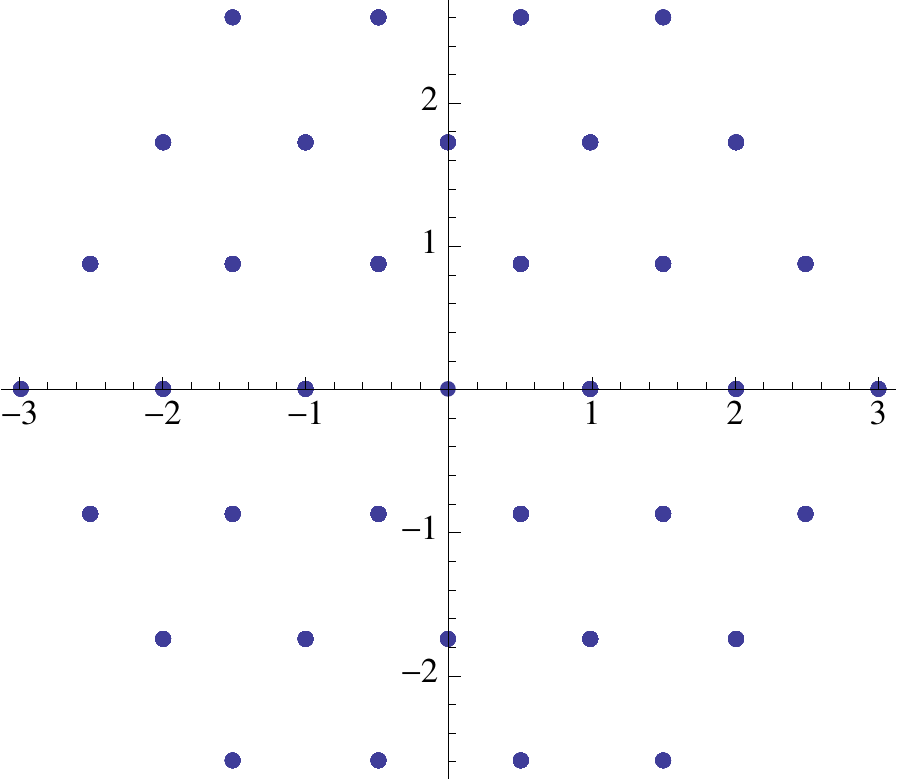}
			\caption{$X=S_3(0,0,4)$}
		\end{subfigure}
		\quad
		\begin{subfigure}{0.3\textwidth}
			\centering
			\includegraphics[width=\textwidth]{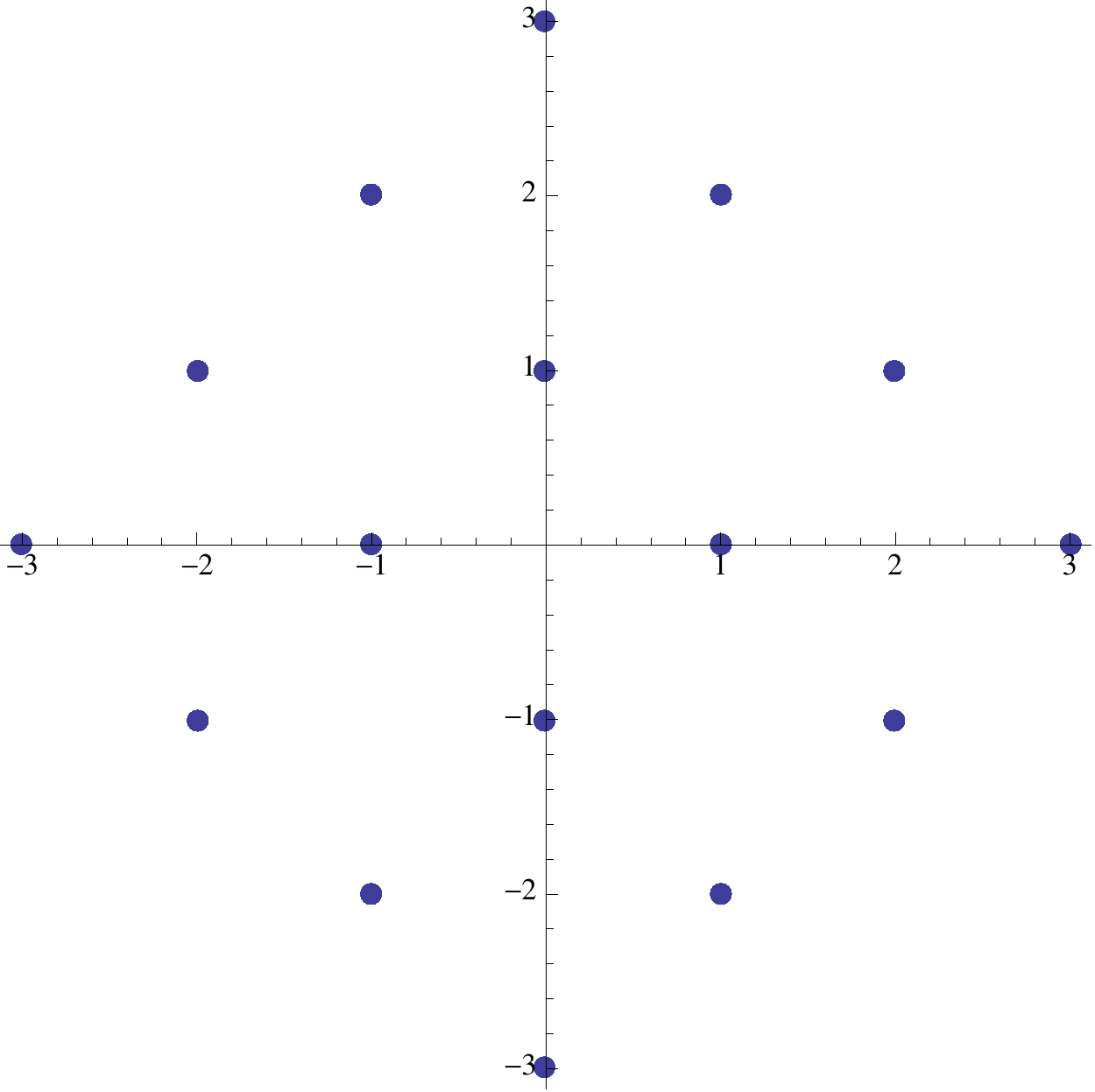}
			\caption{$X = S_3(0,0,6)$}
		\end{subfigure}
		\quad
		\begin{subfigure}{0.3\textwidth}
			\centering
			\includegraphics[width=\textwidth]{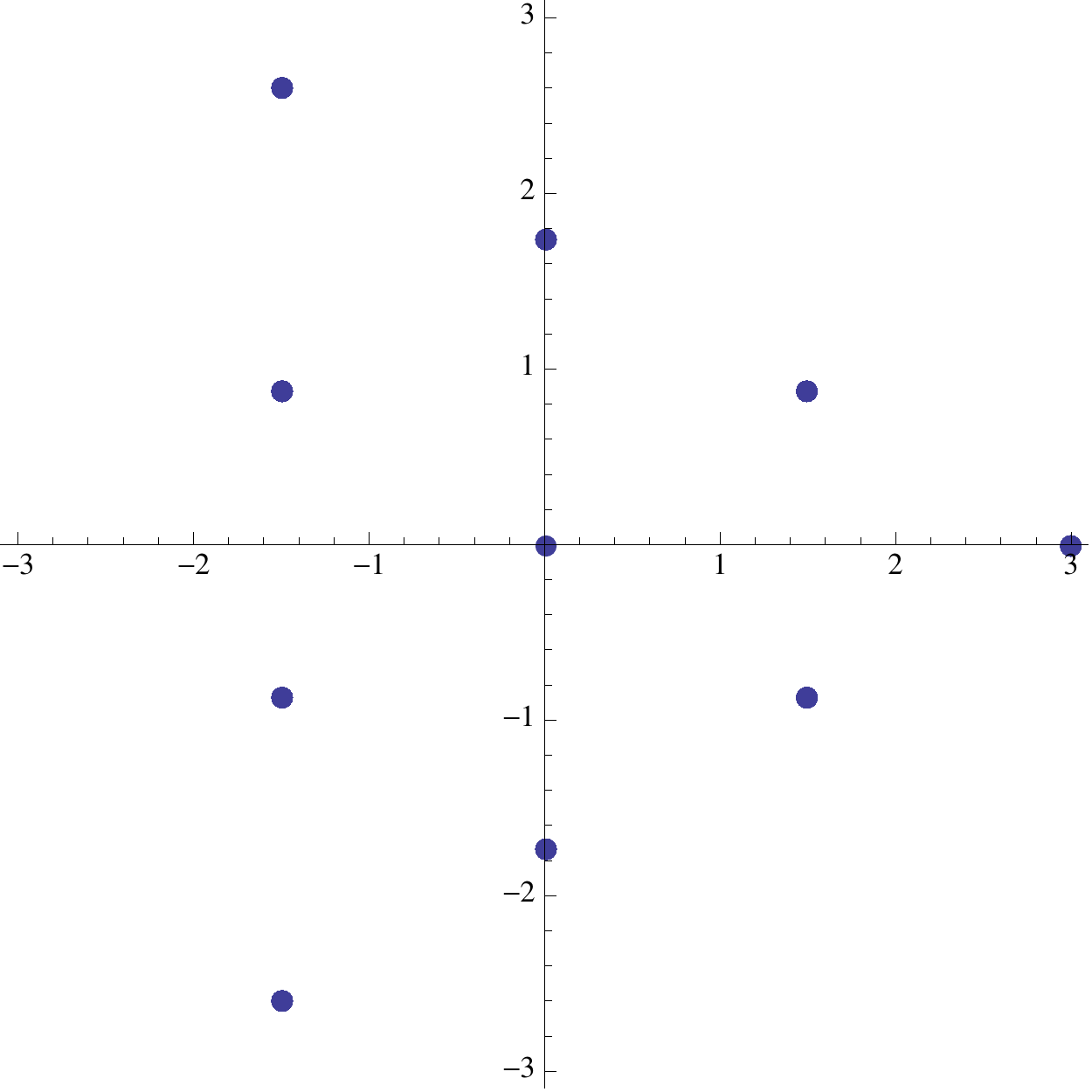}
			\caption{$X=S_3(0,0,8)$}
		\end{subfigure}
		\caption{Images of the supercharacter $\sigma_X:(\Z/24\Z)^3\to\C$ corresponding to 
		various superclasses.  The values attained coincide with the endpoints of all $3$-step
		walks starting at the origin and having steps of unit length which are restricted to the 
		$\frac{24}{(24,a)}$ directions $e(\frac{j(24,a)}{24})$ for $j=1,2,\ldots,\frac{24}{(24,a)}$.}
		\label{FigureRRW}
	\end{figure}

\subsection{Spikes}
	Under certain circumstances, the image of a symmetric supercharacter
	is contained in the union of a relatively small number of evenly spaced rays
	emanating from the origin.  The following proposition provides a simple
	condition which ensures such an outcome (see Figure \ref{FigureSpikes}).
	
	\begin{Proposition}\label{PropositionSpikes}
		If $X = r\vec{1} - X$ for some $r$ in $\Z/n\Z$, then the image of $\sigma_X$ is 
		contained in the union of the $\frac{2n}{(r,n)}$ rays
		given by $\arg z = \frac{\pi j (r,n)}{2n}$ for $j = 1,2,\ldots, \frac{2n}{(r,n)}$.		
	\end{Proposition}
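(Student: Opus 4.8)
The plan is to turn the hypothesis $X = r\vec{1}-X$ into a functional equation for $\sigma_X$, following the template of the conjugate-symmetry computation in Subsection~\ref{SubsectionConjugate}. Observe first that $X = r\vec{1}-X$ means exactly that $\sigma_X = \sigma_{r\vec{1}-X}$. I would therefore compute, for an arbitrary $\vec{y}$ in $(\Z/n\Z)^d$,
\begin{equation*}
\sigma_{r\vec{1}-X}(\vec{y}) = \sum_{\vec{x}\in X} e\!\left(\frac{(r\vec{1}-\vec{x})\cdot\vec{y}}{n}\right) = e\!\left(\frac{r[\vec{y}]}{n}\right)\sum_{\vec{x}\in X} e\!\left(\frac{-\vec{x}\cdot\vec{y}}{n}\right) = e\!\left(\frac{r[\vec{y}]}{n}\right)\overline{\sigma_X(\vec{y})},
\end{equation*}
where I have used $(r\vec{1})\cdot\vec{y} = r[\vec{y}]$ to factor out the common phase and recognized the remaining sum as $\overline{\sigma_X(\vec{y})}$.

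Combining this with $\sigma_X = \sigma_{r\vec{1}-X}$ yields the key relation $\sigma_X(\vec{y}) = e(r[\vec{y}]/n)\,\overline{\sigma_X(\vec{y})}$, valid for every $\vec{y}$. The values with $\sigma_X(\vec{y})=0$ sit at the origin and lie on every ray, so I may restrict attention to $z := \sigma_X(\vec{y}) \neq 0$. Dividing by $\overline{z}$ gives $z/\overline{z} = e(r[\vec{y}]/n)$; since $z/\overline{z} = e^{2i\arg z}$, this is $e^{2i\arg z} = e^{2\pi i\, r[\vec{y}]/n}$, so that $\arg z \equiv \frac{\pi r[\vec{y}]}{n} \pmod{\pi}$. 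Hence each value of $\sigma_X$ lies on one of the two antipodal rays determined by this argument.

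To finish, I would let $\vec{y}$ range over $(\Z/n\Z)^d$ and collect the resulting rays. As $[\vec{y}]$ sweeps out all of $\Z/n\Z$, the quantity $r[\vec{y}]$ sweeps out precisely the multiples of $(r,n)$ modulo $n$, so the admissible arguments are exactly the integer multiples of $\frac{\pi(r,n)}{n}$. These are equally spaced around the circle, there are $\frac{2n}{(r,n)}$ of them, and enumerating them recovers the claimed family of $\frac{2n}{(r,n)}$ evenly spaced rays; by the conjugate symmetry of Subsection~\ref{SubsectionConjugate} this family is automatically symmetric about the real axis, as it must be.

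The only delicate point — and the step I expect to require the most care — is this final bookkeeping. One must correctly identify the image of multiplication by $r$ on $\Z/n\Z$ as the subgroup of multiples of $(r,n)$, and, more subtly, track the factor of two introduced when passing from $e^{2i\arg z}$ back to $\arg z$: the relation pins down $\arg z$ only modulo $\pi$, so each residue of $[\vec{y}]$ contributes a pair of opposite rays, and one must verify that the union over all residues is exactly the claimed evenly spaced collection. No input beyond the defining sum \eqref{eq-Supercharacter} and the conjugate-symmetry computation is needed.
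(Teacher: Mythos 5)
Your proposal is correct and is essentially the paper's own argument: both proofs reduce the hypothesis $X = r\vec{1}-X$ to the functional equation $\sigma_X(\vec{y}) = e\!\left(\frac{r[\vec{y}]}{n}\right)\overline{\sigma_X(\vec{y})}$ and then observe that, since $r[\vec{y}]$ ranges over multiples of $(r,n)$ modulo $n$, every nonzero value has argument a multiple of $\frac{\pi(r,n)}{n}$, yielding $\frac{2n}{(r,n)}$ evenly spaced rays. The only differences are cosmetic and in your favor: you derive the identity directly from \eqref{eq-Supercharacter} via the bijection $\vec{x}\mapsto r\vec{1}-\vec{x}$ of $X$ onto itself, whereas the paper routes the same computation through the stabilizer form \eqref{eq-Stab} using a permutation $\rho$ with $\vec{x}=r\vec{1}-\rho(\vec{x})$, and your explicit bookkeeping of the mod-$\pi$ ambiguity is more careful than the paper's endgame (whose final display drops an $e$, and whose stated ray angles $\frac{\pi j(r,n)}{2n}$ sweep only the upper half-plane, a typographical slip for the multiples of $\frac{\pi(r,n)}{n}$ that both proofs actually produce).
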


	\begin{proof}
		Since $X = r \vec{1} - X$, for each $\vec{x}$ in $X$ there exists a permutation $\rho$ 
		such that $\vec{x} = r\vec{1} - \rho(\vec{x})$.  Using \eqref{eq-Stab} we find that
		\begin{align*}
			\sigma_X(\vec{y}) 
			&= \frac{1}{|\stab(\vec{x})|}\sum_{\pi \in S_d} e\left( \frac{\pi(\vec{x}) \cdot \vec{y}}{n} \right) \\ 
			&= \frac{1}{|\stab(\vec{x})|} \sum_{\pi \in S_d} e\left( \frac{\pi \big(r\vec{1}-  \rho(\vec{x})\big) \cdot \vec{y}}{n} \right) \\
			&= \frac{1}{|\stab(\vec{x})|} \sum_{\pi' \in S_d} e\left( \frac{r\vec{1} \cdot \vec{y} - \pi'(\vec{x}) \cdot \vec{y}}{n} \right) \\
			&= e\left( \frac{r[\vec{y}]}{n} \right) \frac{1}{|\stab(\vec{x})|} \sum_{\pi' \in S_d}   e\left( \frac{- \pi'(\vec{x}) \cdot \vec{y}}{n} \right) \\
			&= e\left( \frac{r[\vec{y}]}{n} \right) \overline{\sigma_X(\vec{y})} \\
			&= e\left( \frac{j(r,n)}{n} \right) \overline{\sigma_X(\vec{y})} ,
		\end{align*}
		where $j$ is an integer which depends on $\vec{y}$.  It follows from the preceding that
		\begin{equation*}
			\frac{\sigma_X(\vec{y})}{ | \sigma_X(\vec{y}) | }
			= \pm\left( \frac{j(r,n)}{2n} \right),
		\end{equation*}
		from which the desired result follows.
	\end{proof}

	\begin{figure}[htb!]
		\begin{subfigure}{0.4\textwidth}
			\centering
			\includegraphics[width=\textwidth]{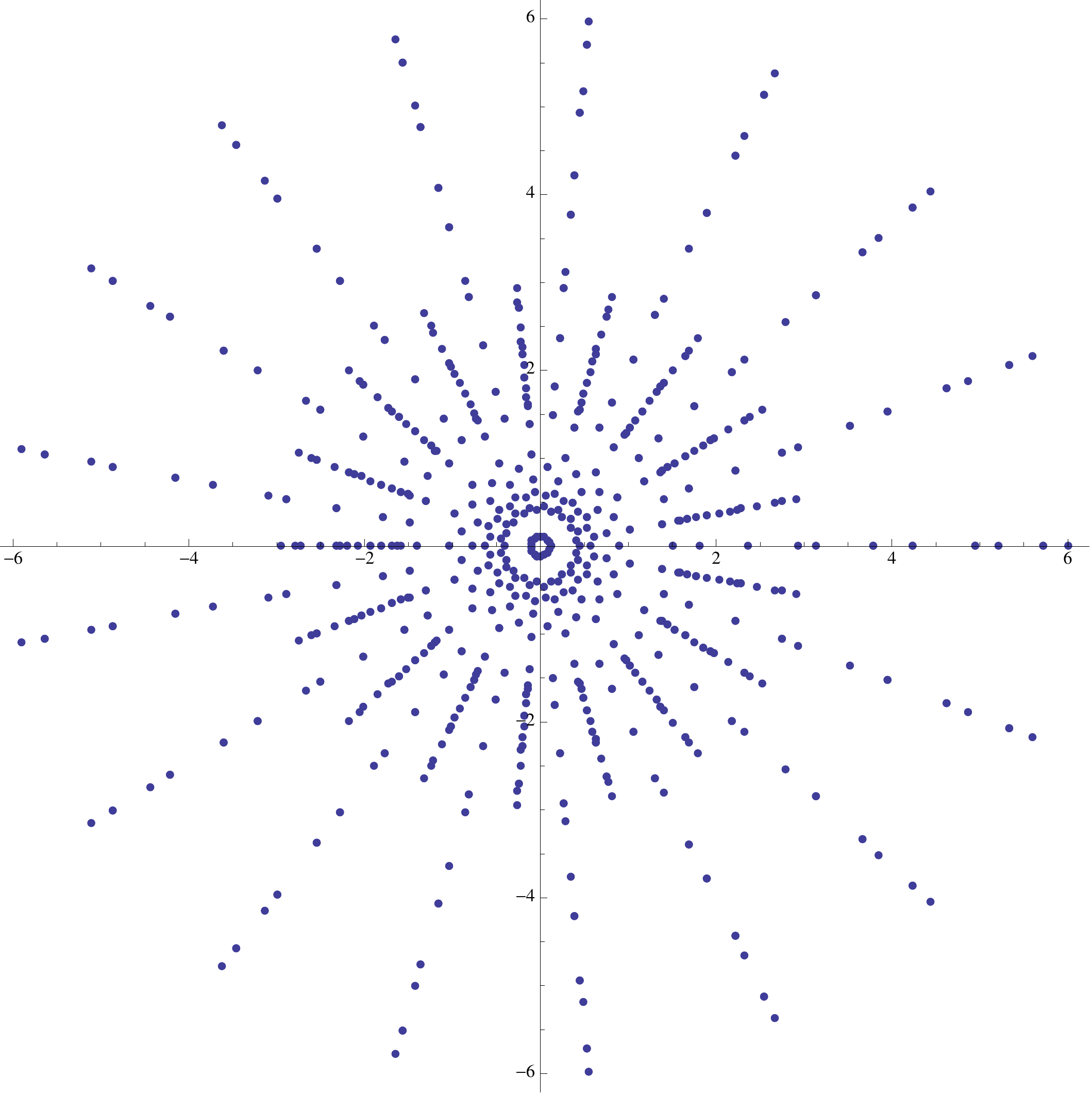}
			\caption{$n=17$, $d=3$, $X=S_3(1,2,3)$, $r=4$, $\frac{2n}{(r,n)}=34$}
		\end{subfigure}
		\qquad		
		\begin{subfigure}{0.4\textwidth}
			\centering
			\includegraphics[width=\textwidth]{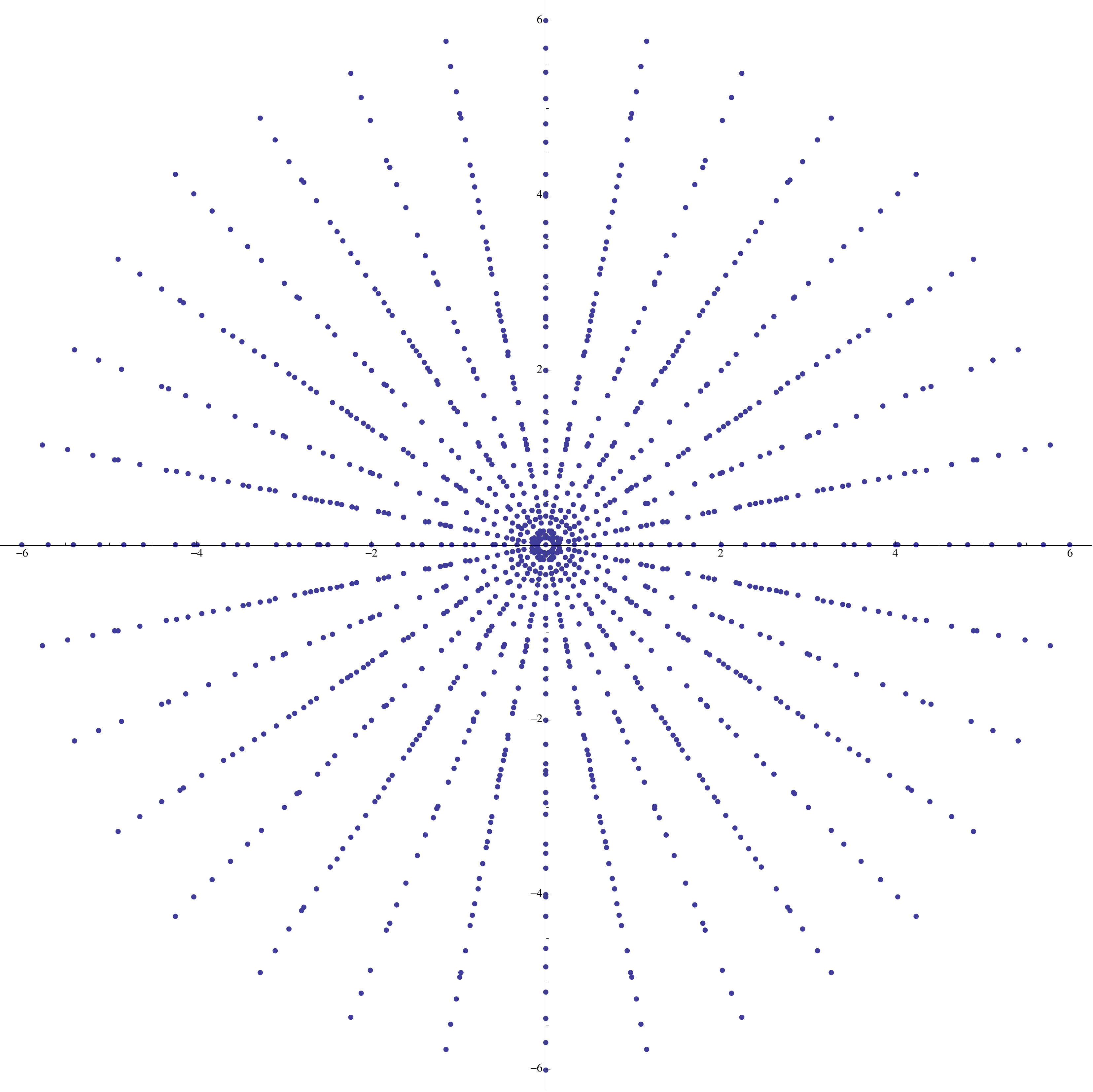}
			\caption{$n=16$, $d=4$, $X=S_4(1,1,10,10)$, $r=11$, $\frac{2n}{(r,n)}=32$}
		\end{subfigure}
		\caption{The image of a supercharacters $\sigma_X:(\Z/n\Z)^d \to \C$ for which $X = r \vec{1}-X$
		is contained in the union of $\frac{2n}{(r,n)}$ rays.}
		\label{FigureSpikes}
	\end{figure}
	
	\begin{Example}
		With more information, we can sometimes improve substantially upon 
		Proposition \ref{PropositionSpikes}.  
		Consider the supercharacter $\sigma_X:(\Z/n\Z)^d \to \C$ corresponding to 
		\begin{equation*}
			X = S_d (0,\underbrace{1,1,\ldots,1}_{d-2\text{ times}},2).
		\end{equation*}
		Since $X = 2\vec{1}-X$, it follows from Proposition \ref{PropositionSpikes} that the image of 
		$\sigma_X$ is contained in the union of $\frac{2n}{(2,n)}$ equally spaced rays emanating from the origin.
		However, we can be much more specific.	
		Letting $\vec{y}=(y_1,y_2,\ldots,y_d)$, we see that
		\begin{align}
			\sigma_X(\vec{y})
			&=\sum_{\vec{x} \in X}e\left(\frac{\vec{x}\cdot\vec{y}}{n}\right) \nonumber\\
			&=\sum_{1\leq j \neq k\leq d}e\left(\frac{[\vec{y}]+y_j-y_k}{n}\right) \nonumber\\
			&=e\left(\frac{[\vec{y}]}{n}\right)\sum_{1\leq j \neq k\leq d}e\left(\frac{y_j-y_k}{n}\right) \nonumber\\
			&=e\left(\frac{[\vec{y}]}{n}\right)\left[\left(\sum_{j=1}^d e\left(\frac{y_j}{n}\right)\right)
				\overline{\left(\sum_{k=1}^d e\left(\frac{y_k}{n}\right)\right)}-d\right] \nonumber\\
			&=e\left(\frac{[\vec{y}]}{n}\right)\left( \bigg| \sum_{j=1}^d e\left(\frac{y_j}{n}\right)\bigg|^2-d\right).
			\label{eq-DDD}
		\end{align}
		Noting that the second term in \eqref{eq-DDD} lies in the interval $[-d,d^2-d]$,
		we can see that the qualitative behavior of $\sigma_X$ depends heavily upon the parity of $n$
		(see Figure \ref{FigureHSB}).
		\begin{itemize}\addtolength{\itemsep}{0.5\baselineskip}
			\item If $n$ is odd, then the image of $\sigma_X$ is contained in the union of $2n$ rays,
				$n$ of which have length $d^2-d$ while the remaining $n$ rays have length approximately $d$
				(see Figure \ref{FigureHSB1}).
	
			\item If $n$ is even, then the image of $\sigma_X$ is contained in the union of $n$ rays, each of length $d^2 - d$
				(see Figure \ref{FigureHSB2}).
				Due to ``double coverage,'' the concentration of points is higher in the region $|z| \leq d$.	
		\end{itemize}
	\end{Example}

	\begin{figure}[h]
		\begin{subfigure}{0.4\textwidth}
			\centering
			\includegraphics[width=\textwidth]{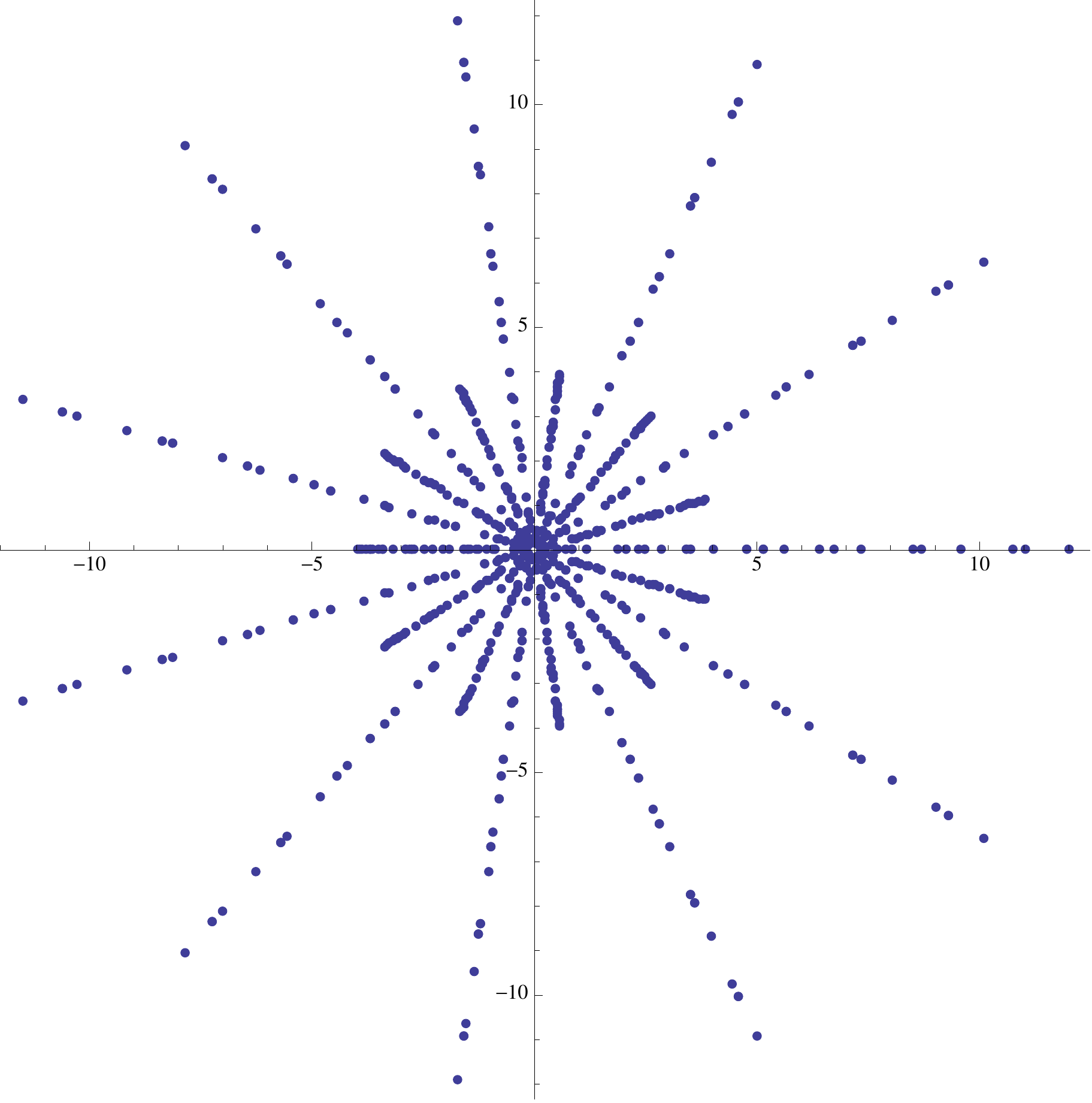}
			\caption{$n=11$, $d=4$, $\frac{2n}{(2,n)}=22$}
			\label{FigureHSB1}
		\end{subfigure}
		\qquad
		\begin{subfigure}{0.4\textwidth}
			\centering
			\includegraphics[width=\textwidth]{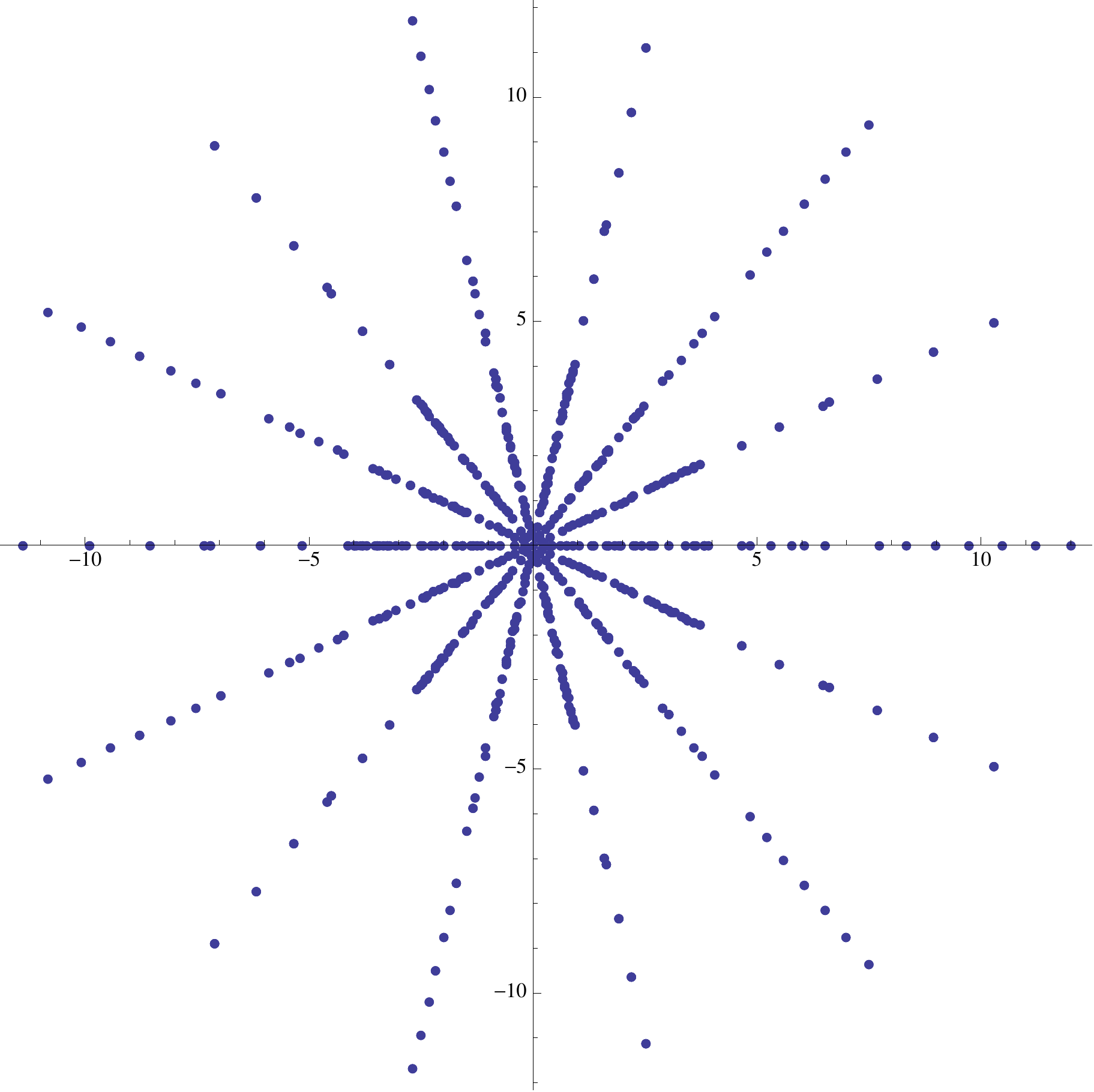}
			\caption{$n=14$, $d=4$, $\frac{2n}{(2,n)}=14$}
			\label{FigureHSB2}
		\end{subfigure}
		\caption{Images of the supercharacter $\sigma_X:(\Z/n\Z)^4\to\C$ corresponding to the superclass
		$X = S_4 (0,1,1,2)$ for several values of $n$.  In accordance with Proposition \ref{PropositionSpikes},
		the number of rays is given by $\frac{2n}{(2,n)}$ since $X = 2\vec{1} - X$.}
		\label{FigureHSB}
	\end{figure}
	
\subsection{Hypocycloids}\label{SubsectionHypocycloids}

	Hypocycloids are suggested in many symmetric supercharacter plots and 
	they form the basis of more complicated constructions.
	Recall that a \emph{hypocycloid} is a planar curve obtained by tracing the path of a fixed point on a small circle
	which rolls inside a larger circle.  The parametric equations
	\begin{equation}\label{eq-HypoPar}
		\begin{split}
			x(\theta) &= (d-1) \cos \theta + \cos[(d-1)\theta], \\
			y(\theta) &= (d-1)\sin \theta - \sin[(d-1)\theta],
		\end{split}
	\end{equation}
	describe the $d$-cusped hypocycloid obtained by rolling a circle of radius $1$ inside of a circle of integral
	radius $d$ centered at the origin.	

	\begin{Proposition}\label{PropositionHypo}
		If $X = S_d(1,1,\ldots,1,1-d)$, then the image of $\sigma_X:(\Z/n\Z)^d\to \C$ is bounded by the
		hypocycloid determined by \eqref{eq-HypoPar}.	
	\end{Proposition}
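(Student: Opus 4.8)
The plan is to reduce $\sigma_X(\vec{y})$ to a sum of $d$ unit-modulus complex numbers whose product is $1$, and then to prove a purely geometric fact about such sums. First I would compute the supercharacter explicitly. Generically the orbit $X = S_d(1,1,\ldots,1,1-d)$ consists of the $d$ vectors obtained by placing the entry $1-d$ in each coordinate in turn, so that by \eqref{eq-Stab}
\begin{equation*}
	\sigma_X(\vec{y}) = \sum_{k=1}^d e\left( \frac{[\vec{y}] - d\,y_k}{n} \right) = \sum_{k=1}^d w_k, \qquad w_k := e\left( \frac{[\vec{y}] - d\,y_k}{n} \right).
\end{equation*}
Each $w_k$ has unit modulus, and since $\sum_k y_k \equiv [\vec{y}] \pmod{n}$ a direct computation gives $\prod_{k=1}^d w_k = e\left( \frac{d[\vec{y}] - d[\vec{y}]}{n} \right) = 1$. (The sole degenerate case $d \equiv 0 \pmod n$ collapses $X$ to the single point $\vec{1}$ and $\sigma_X$ to a unimodular value, trivially inside.) Hence it suffices to prove the geometric statement that whenever $w_1,\ldots,w_d$ are unit-modulus complex numbers with $\prod_k w_k = 1$, the sum $w_1 + \cdots + w_d$ lies in the closed region bounded by the hypocycloid, which after recombining \eqref{eq-HypoPar} is the curve $z(\theta) = (d-1)e^{i\theta} + e^{-i(d-1)\theta}$.

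For the geometric lemma I would use a compactness and critical-point argument. The configuration space $T = \{(w_1,\ldots,w_d) : |w_k| = 1,\ \prod_k w_k = 1\}$ is a compact connected torus, so its image $\Omega$ under $S(\vec{w}) = \sum_k w_k$ is compact and the boundary $\partial\Omega$ is contained in the set of critical values of $S|_T$. Writing $w_k = e^{i\theta_k}$ with $\sum_k \theta_k$ held constant, admissible variations satisfy $\sum_k \dot\theta_k = 0$, and the image of the differential of $S$ is the real span of the differences $e^{i\theta_k} - e^{i\theta_\ell}$. This span fails to be all of $\C$ exactly when the points $w_1,\ldots,w_d$ are collinear; since a line meets the unit circle in at most two points, every critical configuration takes at most two distinct values on the circle.

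It then remains to analyze these two-valued configurations and single out the outermost one. Suppose $j$ of the $w_k$ equal $a$ and the remaining $d-j$ equal $b$, subject to $a^j b^{d-j} = 1$, so that $S = ja + (d-j)b$. When $j = 1$ (equivalently $j = d-1$) the constraint forces $a = b^{1-d}$, and writing $b = e^{i\theta}$ the sum becomes
\begin{equation*}
	S = (d-1)b + b^{1-d} = (d-1)e^{i\theta} + e^{-i(d-1)\theta},
\end{equation*}
which is precisely the hypocycloid, while the degenerate cases $j \in \{0,d\}$ recover the cusps $d\,e^{2\pi i m/d}$. Thus the hypocycloid is one of the critical curves, and it remains to show that for every $j$ with $2 \le j \le d-2$ the curve $S = ja + (d-j)b$ also lies within the closed region bounded by the hypocycloid; granting this, $\partial\Omega$ is the hypocycloid and $\Omega$ is contained in the region it bounds, which is the assertion of the proposition.

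This last comparison is the main obstacle. I expect to handle it by exploiting the common $\frac{2\pi}{d}$-fold rotational and reflective symmetry shared by all of the two-valued curves, which restricts the problem to a single angular sector; there one parametrizes $a$ and $b$ by the single free parameter left after imposing $j\alpha + (d-j)\beta \equiv 0$ and shows, via a monotonicity estimate on the radial distance, that each lower curve stays weakly inside the hypocycloid. Alternatively, one may recognize $\Omega$ as the image of the trace map $U \mapsto \tr U$ on $SU(d)$ --- where $\tr U = \sum_k \lambda_k$ runs over eigenvalues $\lambda_k$ on the unit circle with $\prod_k \lambda_k = \det U = 1$ --- and invoke the known identification of this image with the hypocycloidal region. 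Combining the geometric lemma with the reduction of the first paragraph then completes the proof.
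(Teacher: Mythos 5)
Your opening reduction is exactly the paper's: setting $\xi_\ell = [\vec{y}] - d y_\ell$, the paper likewise observes that $\sum_\ell \xi_\ell \equiv 0$, so that $\sigma_X(\vec{y})$ is a sum of $d$ unimodular numbers whose product is $1$, i.e., a value of $g(z_1,\ldots,z_{d-1}) = z_1 + \cdots + z_{d-1} + (z_1 z_2 \cdots z_{d-1})^{-1}$ on $\T^{d-1}$. The divergence comes next: the paper stops there and cites \cite[Sect.~3]{Kaiser} for the fact that the range of $g$ is the filled hypocycloid (mentioning the $SU(d)$ trace interpretation only as a remark afterwards), whereas you attempt to prove that geometric lemma from scratch. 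Your critical-point skeleton is sound: the boundary of the compact image $\Omega$ lies among the critical values of $S(\vec{w}) = \sum_k w_k$ on the torus $\prod_k w_k = 1$; the differential drops rank precisely when the $w_k$ are collinear, hence take at most two distinct values on the circle; the $j=1$ family is the hypocycloid $(d-1)e^{i\theta} + e^{-i(d-1)\theta}$ and $j \in \{0,d\}$ gives the cusps. (Your final topological step also needs the routine observation that a compact set whose boundary lies in the filled hypocycloid lies in it entirely, since a path from a putative exterior point of $\Omega$ to infinity avoiding the hypocycloid region would still have to cross $\partial\Omega$.)

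The genuine gap is the one you flag yourself: showing that the intermediate critical curves $S = ja + (d-j)b$ with $a^j b^{d-j} = 1$ and $2 \le j \le d-2$ stay inside the hypocycloid. You offer only an expected monotonicity estimate with no actual inequality, and this comparison is the real content of proofs in the style of \cite{Kaiser}; it does not follow from the shared dihedral symmetry alone, so as written the self-contained route is incomplete. However, your fallback --- invoking the known identification of $\{\tr U : U \in SU(d)\}$ with the hypocycloidal region --- is legitimate and is in substance exactly the citation the paper makes, and with that substitution your argument is complete and coincides with the paper's. Two minor notes: the proposition asserts only containment (``bounded by''), so you never need the reverse inclusion that the whole filled hypocycloid is attained; and your treatment of the degenerate case $n \mid d$, where $X$ collapses to $\{\vec{1}\}$, is a harmless addition the paper does not bother with.
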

		
	\begin{proof}
		Given $\vec{y} = (y_1,y_2,\ldots,y_d)$, let $\xi_{\ell} = [\vec{y}] - dy_{\ell}$ for $1\leq \ell \leq d$, noting that
		\begin{equation*}
			\xi_1+\xi_2+\cdots + \xi_d = d[\vec{y}] - d(y_1+y_2+\cdots+y_d) = 0,
		\end{equation*}
		so that $\xi_d = -(\xi_1+\xi_2+\cdots+\xi_{d-1})$.
		Next, we observe that
		\begin{align*}
			\sigma_X(\vec{y})
			&= \sum_{\ell=1}^d e\left( \frac{(1-d)y_{\ell} + \sum_{\substack{i=1\\ i \neq \ell}}^d y_i  }{n}\right)  \\
			&= \sum_{\ell=1}^d e\left( \frac{ [\vec{y}] - d y_{\ell}}{n}\right) 
			= \sum_{\ell=1}^d e\left( \frac{\xi_{\ell}}{n}\right) \\
			&= \sum_{\ell=1}^{d-1} e\left( \frac{\xi_{\ell}}{n}\right) 
			+ e\left( \frac{ -( \xi_1+\xi_2+\cdots+\xi_{d-1})}{n}\right).
		\end{align*}
		Letting $\T$ denote the unit circle $|z|=1$ in $\C$, we see that
		the range of $\sigma_X$ is contained in the range of the function
		$g:\T^{d-1}\to\C$ defined by 
		\begin{equation}\label{eq-HypocycloidMap}
			g(z_1,z_2,\ldots,z_{d-1}) = z_1 + z_2 + \cdots + z_{d-1} + \frac{1}{z_1 z_2 \cdots z_{d-1}},
		\end{equation}
		which is known to be the filled hypocycloid determined by 
		\eqref{eq-HypoPar} \cite[Sect.~3]{Kaiser}.
	\end{proof}

	Noting that $\xi_1,\xi_2,\ldots,\xi_d$ are completely determined by the values of the arbitrary parameters
	$y_1,y_2,\ldots,y_{d-1}$ and $[\vec{y}]$ in $\Z/n\Z$, it follows that $(\xi_1,\xi_2,\ldots,\xi_{d-1})$
	runs over all values in $(\Z/n\Z)^{d-1}$ when $(n,d)=1$ and over the set of all $(d-1)$-tuples of the form
	$j\vec{1} + (n,d) ( v_1,v_2,\ldots,v_{d-1})$ otherwise.
	Thus the image of $\sigma_X$ closely approximates the filled hypocycloid when $n/(n,d)$ is large
	(see Figure \ref{FigureHypocycloid}).  

	\begin{figure}[h]
		\centering
		\begin{subfigure}[b]{0.3\textwidth}
	                \centering
	                \includegraphics[width=\textwidth]{HYPO-n19d6}
	                \caption{$n=19$}
	        \end{subfigure}
			\quad
	        \begin{subfigure}[b]{0.3\textwidth}
	                \centering
	                \includegraphics[width=\textwidth]{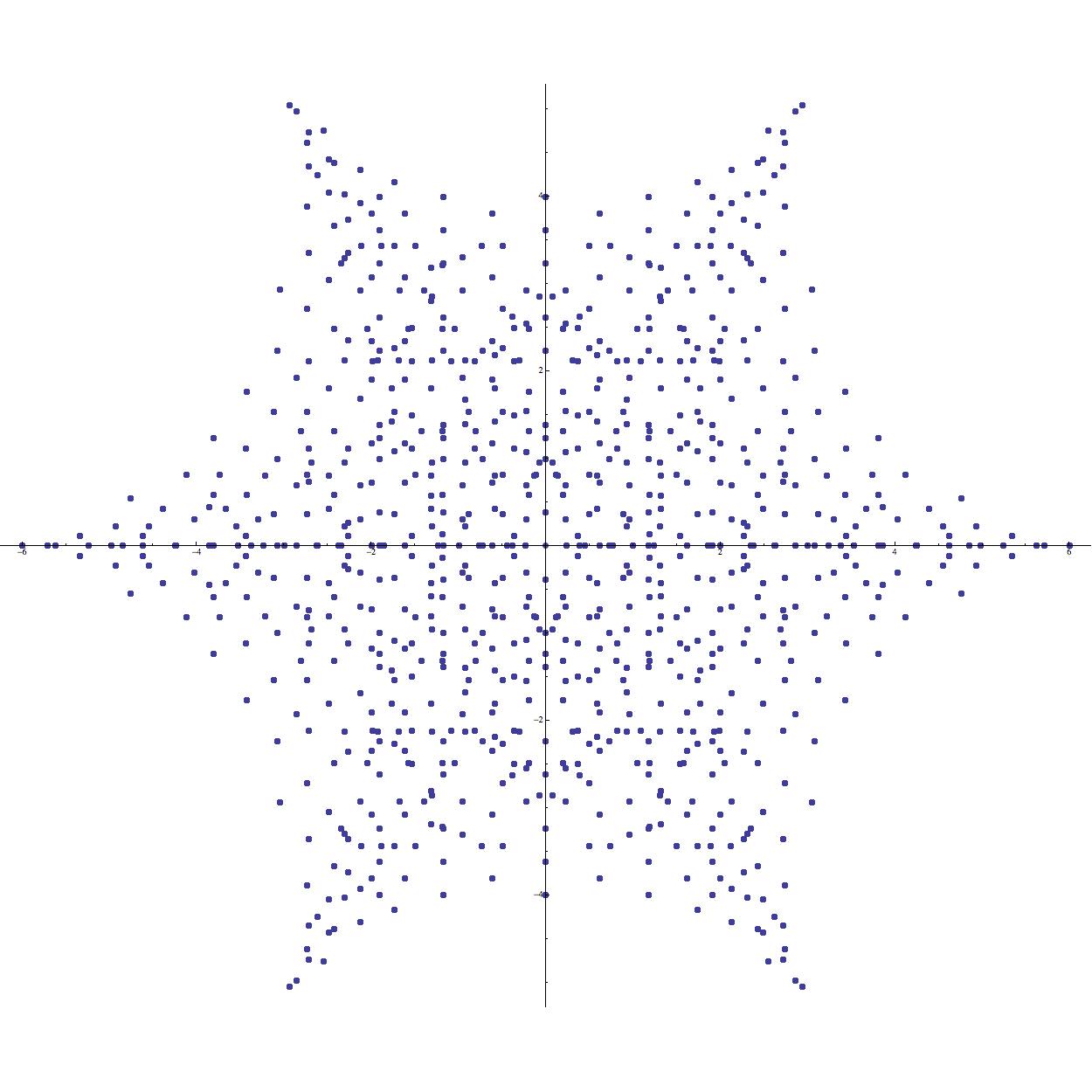}
	                \caption{$n=20$}
	        \end{subfigure}
	        \quad
	        \begin{subfigure}[b]{0.3\textwidth}
	                \centering
	                \includegraphics[width=\textwidth]{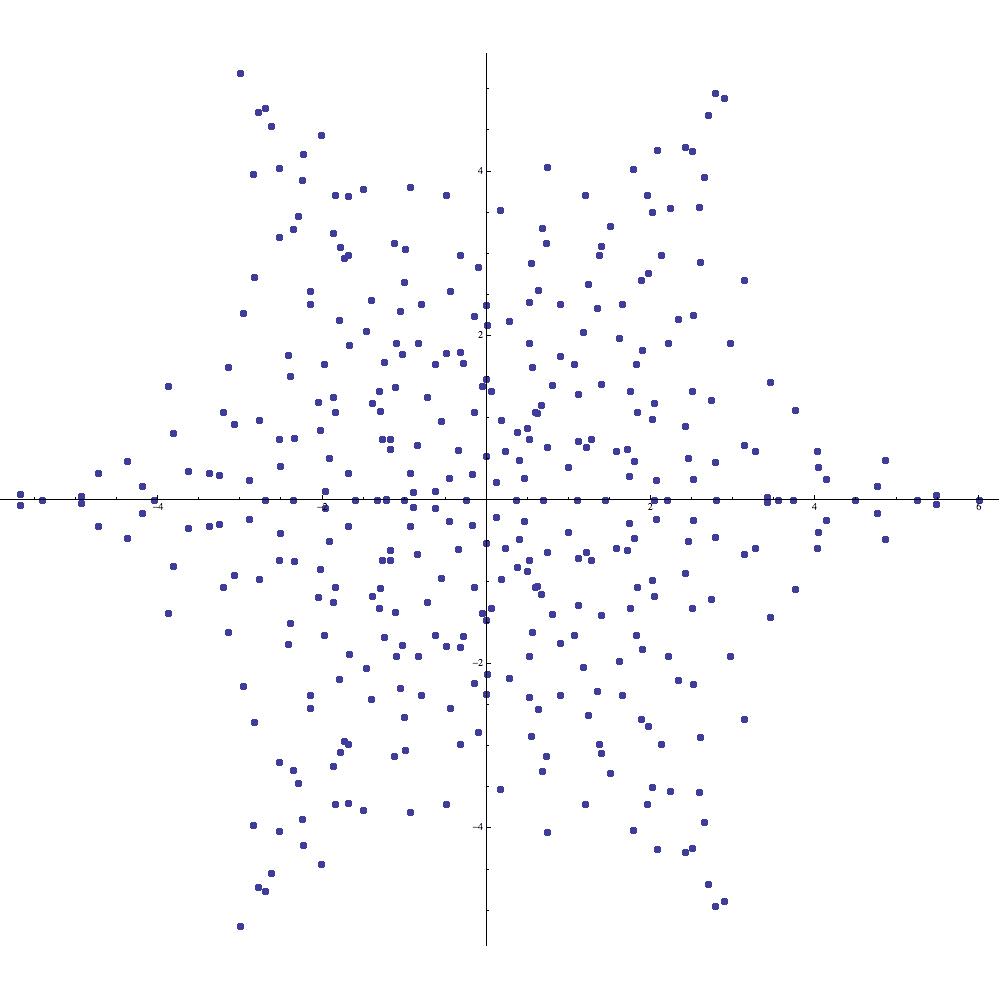}
	                \caption{$n=21$}
	        \end{subfigure}

		\begin{subfigure}[b]{0.3\textwidth}
	                \centering
	                \includegraphics[width=\textwidth]{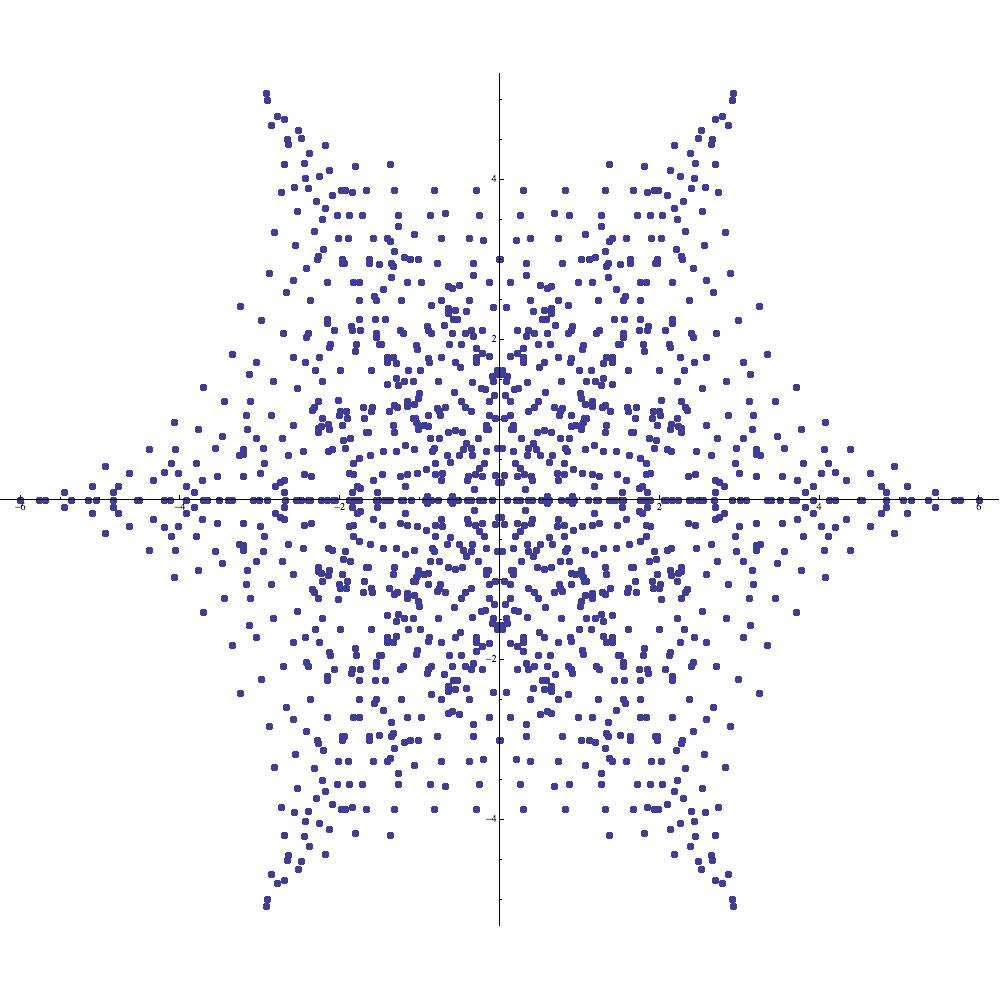}
	                \caption{$n=22$}
	        \end{subfigure}
			\quad
	        \begin{subfigure}[b]{0.3\textwidth}
	                \centering
	                \includegraphics[width=\textwidth]{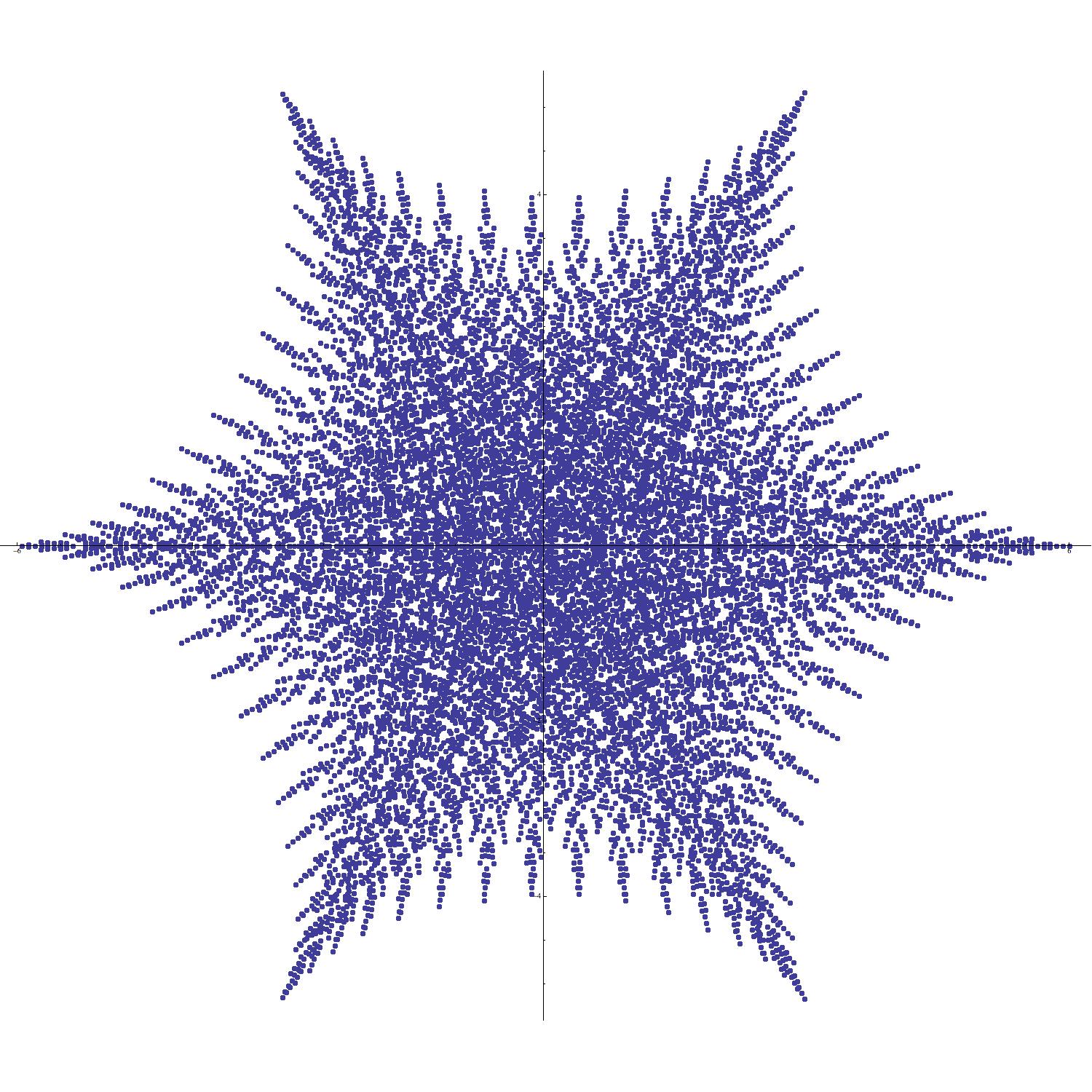}
	                \caption{$n=23$}
	        \end{subfigure}
	        \quad
	        \begin{subfigure}[b]{0.3\textwidth}
	                \centering
	                \includegraphics[width=\textwidth]{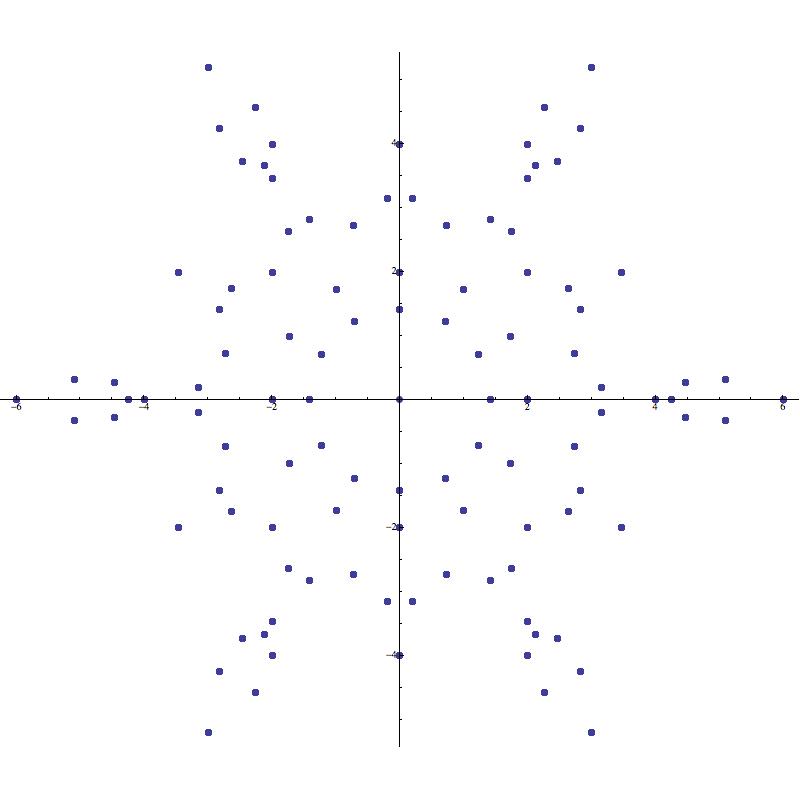}
	                \caption{$n=24$}
	        \end{subfigure}
	        
	        \caption{Plots of the supercharacter $\sigma_X:(\Z/n\Z)^6\to\C$ corresponding
	        to the orbit $X = S_6(1,1,1,1,1,n-5)$ for several values of $n$.  The density of the image
	        depends upon $n/(n,6)$, being high for the primes $n = 19$ and $n =23$ and very
	        low for $n = 24$.}
	        \label{FigureHypocycloid}
	\end{figure}		

	Before proceeding, we should remark that the fact that the range of the multivariate complex mapping \eqref{eq-HypocycloidMap}
	is indeed a filled hypocycloid is familiar to specialists in Lie theory and mathematical physics.  
	Indeed, if $U$ belongs to $SU(d)$, the Lie group consisting of all $d \times d$ unitary matrices having
	determinant $1$, then the eigenvalues $\lambda_1,\lambda_2,\ldots,\lambda_d$ of $U$ are of unit modulus
	and satisfy $\lambda_1 \lambda_2 \cdots \lambda_d = 1$.  In particular, this implies that
	\begin{equation*}
		\tr U = \lambda_1 + \lambda_2 + \cdots + \lambda_{d-1} + \frac{1}{\lambda_1\lambda_2 \cdots \lambda_{d-1}}.
	\end{equation*}
	Therefore the range of the mapping \eqref{eq-HypocycloidMap} is precisely the set of all possible traces
	of matrices in $SU(d)$ (we thank G.~Sarkis for calling this result to our attention).  
	A similar treatment of several other classical Lie groups can be found in \cite{Kaiser}.

\subsection{Fireballs}

	For the sake of reference, we recall the key identity \eqref{eq-xyjk} and write it here for convenient reference:
	\begin{equation}\label{eq-MainIdentity}
		\sigma_{X + j \vec{1}}(Y + k \vec{1})
		= e\left( \frac{ [Y]j + [X]k + djk}{n} \right) \sigma_X(Y).
	\end{equation}
	By starting with a known supercharacter $\sigma_X$ whose image is of interest to us and then
	selecting the parameters $j$ and $k$ appropriately, we can often generate spectacular
	new supercharacter plots.
	This turns out to be a matter of some finesse and experimentation.  Rather than attempt to formulate
	a general theorem, we prefer to present the main ideas 
	through a series of instructive examples.
	
	\begin{Example}\label{ExampleFireball}
		By Proposition \ref{PropositionHypo}, the image of the
		supercharacter $\sigma_{ S_4(1,1,1,12)}:(\Z/15\Z)^4\to\C$ is contained in
		the four-sided hypocycloid (sometimes referred to as an \emph{astroid}) centered at the
		origin and having one of its cusps located at the point $z=4$ (see Figure \ref{SubfigureFB4}).  
		\begin{figure}[h]
			\begin{subfigure}{0.3\textwidth}
				\centering
				\includegraphics[width=\textwidth]{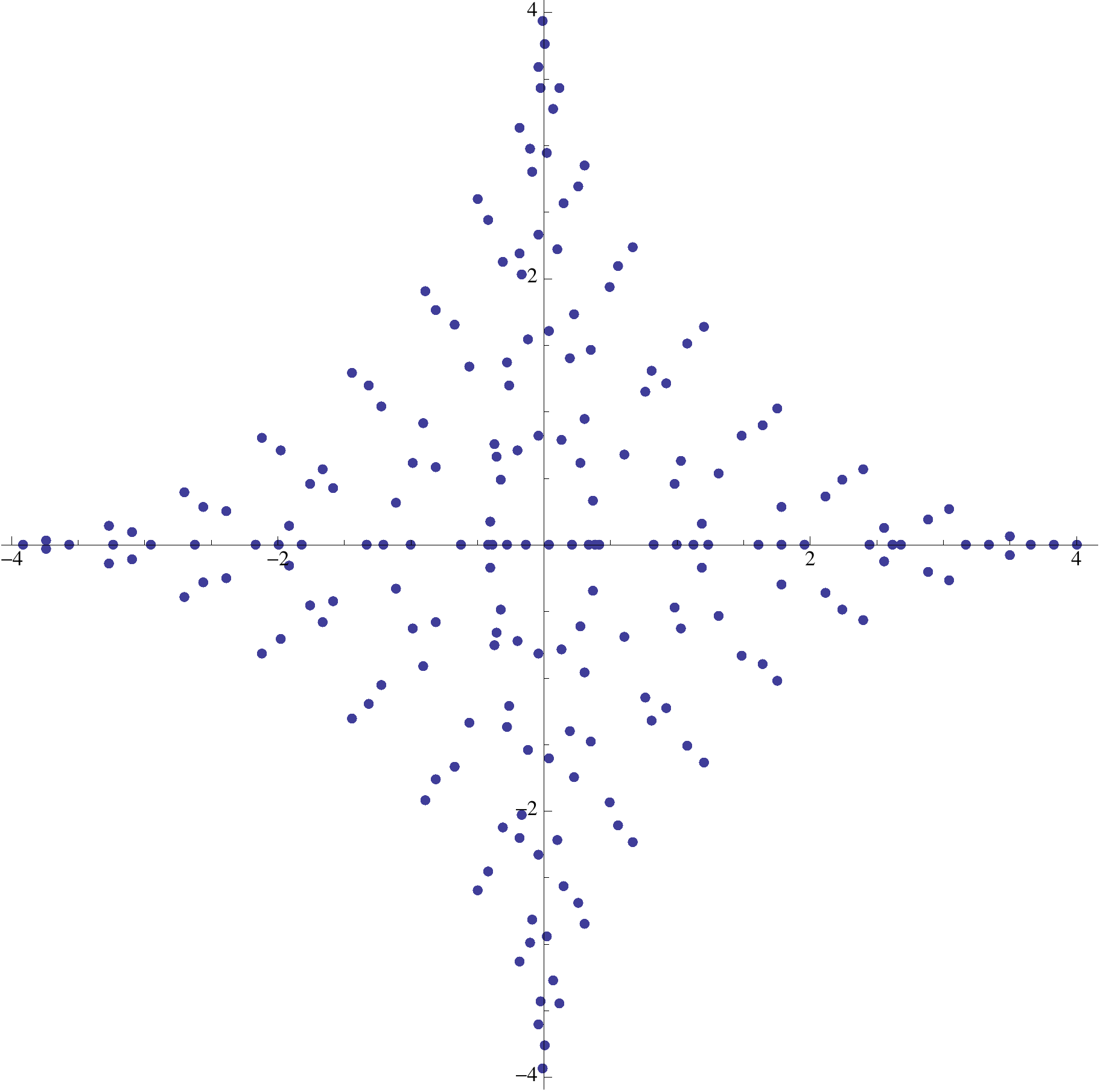}
				\caption{$X = S_4(1,1,1,12)$, $4$ cusps, $D_1$ symmetry}
				\label{SubfigureFB4}
			\end{subfigure}
			\quad
			\begin{subfigure}{0.3\textwidth}
				\centering
				\includegraphics[width=\textwidth]{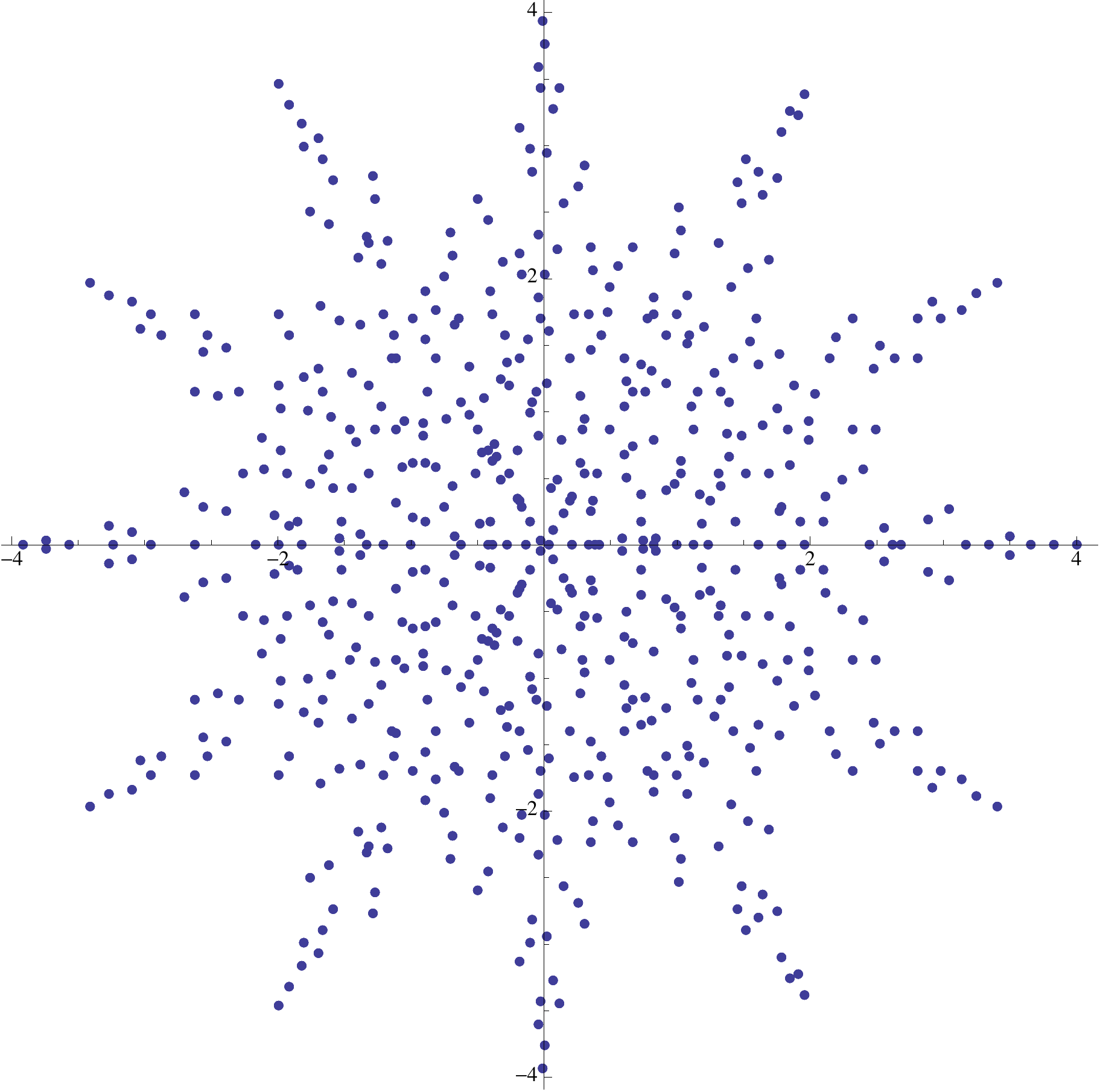}
				\caption{$X=S_4(2,6,6,6)$, $12$ cusps, $D_3$ symmetry}
				\label{SubfigureFB12}
			\end{subfigure}
			\quad
			\begin{subfigure}{0.3\textwidth}
				\centering
				\includegraphics[width=\textwidth]{FB-n15d4-1,1,1,3.pdf}
				\caption{$X = S_4(0,4,4,4)$, $20$ cusps, $D_5$ symmetry}
				\label{SubfigureFB20}
			\end{subfigure}
			\caption{Images of supercharacters $\sigma_X:(\Z/15\Z)^4\to\C$
			which illustrate the method of Example \ref{ExampleFireball}.}
			\label{FigureFB}
		\end{figure}
		Using the identity
		\eqref{eq-MainIdentity} with the parameters
		$n=15$, $d=4$, $[X] = 15$, $j = 5$, and $k = 4(\ell - [Y])$,
		we find that 
		\begin{equation*}
			\sigma_{S_4(2,6,6,6)}(Y+k\vec{1}) = e\left(\frac{\ell}{3}\right) \sigma_X(Y).
		\end{equation*}
		Since the integer $\ell$ is arbitrary, 
		it follows from the preceding computation that the image of $\sigma_{S_4(2,6,6,6)}:(\Z/15\Z)^4\to\C$
		is precisely the union of three rotated copies of our original image, yielding a ``fireball'' with 
		$4 \times 3 = 12$ cusps (see Figure \ref{SubfigureFB12}).  Because the original
		image (Figure \ref{SubfigureFB4}) has no rotational symmetry, the resulting
		fireball image possesses only $D_3$ symmetry, despite the fact that a careless glance at the figure
		suggests the existence of $D_{12}$ symmetry.
		Along similar lines, setting $j = 3$ and $k = 4(\ell - [Y])$ yields
		\begin{equation*}
			\sigma_{S_4(0,4,4,4)}(Y + k \vec{1}) = e\left( \frac{\ell}{5} \right) \sigma_X(Y),
		\end{equation*}
		so that the resulting ``fireball'' has $4 \cdot 5 = 20$ cusps (see Figure \ref{SubfigureFB20}).  
	\end{Example}

	\begin{Example}
		In light of Proposition \ref{PropositionHypo},
		the image of the supercharacter $\sigma_{S_5(1,1,1,1,8)}:(\Z/12\Z)^5\to\C$
		resembles a filled hypocycloid with five cusps (see Figure \ref{SubfigureFB2.1}).  
		It follows from the method of the preceding example that replacing
		$(1,1,1,1,8)$ with $(1,1,1,1,8) + j \vec{1}$ yields a ``fireball'' having
		$\frac{5\cdot12}{(j,12)}$ cusps (see Figure \ref{FigureFB2}).
	\end{Example}

	\begin{figure}[h]
		\begin{subfigure}{0.3\textwidth}
			\centering
			\includegraphics[width=\textwidth]{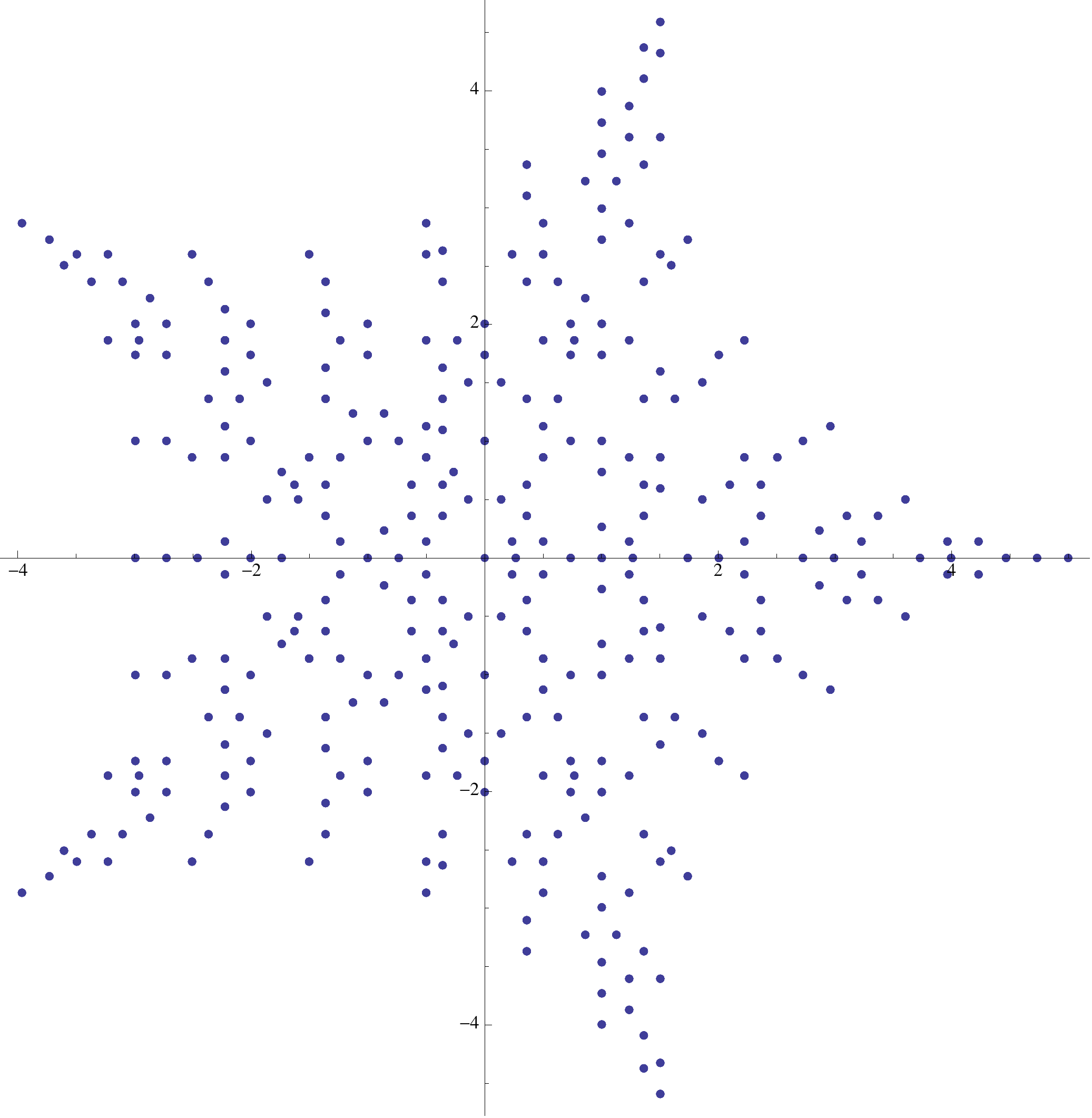}
			\caption{$j=0$, $D_1$-symmetry, $5$ cusps}
			\label{SubfigureFB2.1}
		\end{subfigure}
		\quad
		\begin{subfigure}{0.3\textwidth}
			\centering
			\includegraphics[width=\textwidth]{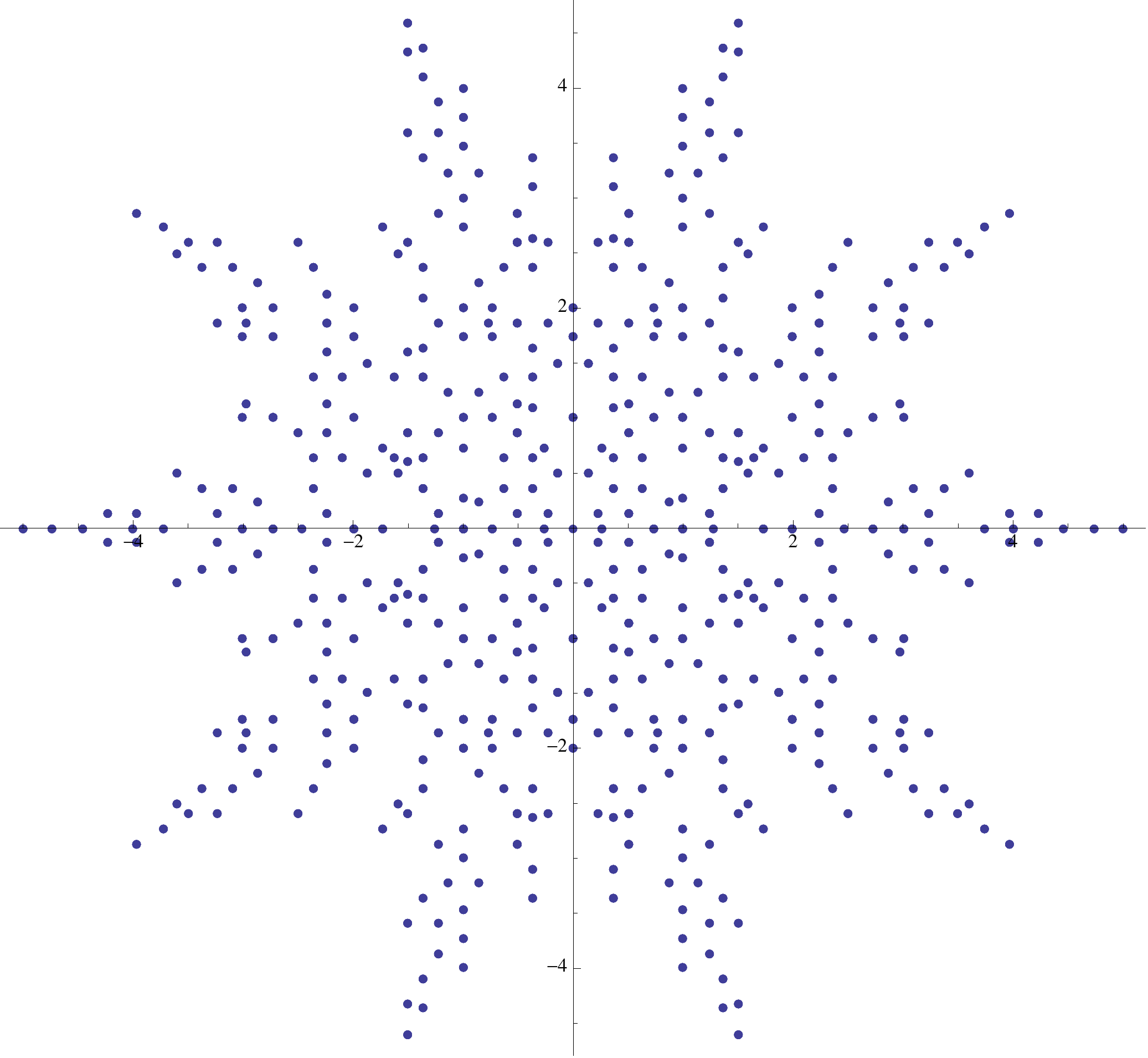}
			\caption{$j=6$, $D_2$-symmetry, $10$ cusps}
			\label{SubfigureFB2.2}
		\end{subfigure}
		\quad
		\begin{subfigure}{0.3\textwidth}
			\centering
			\includegraphics[width=\textwidth]{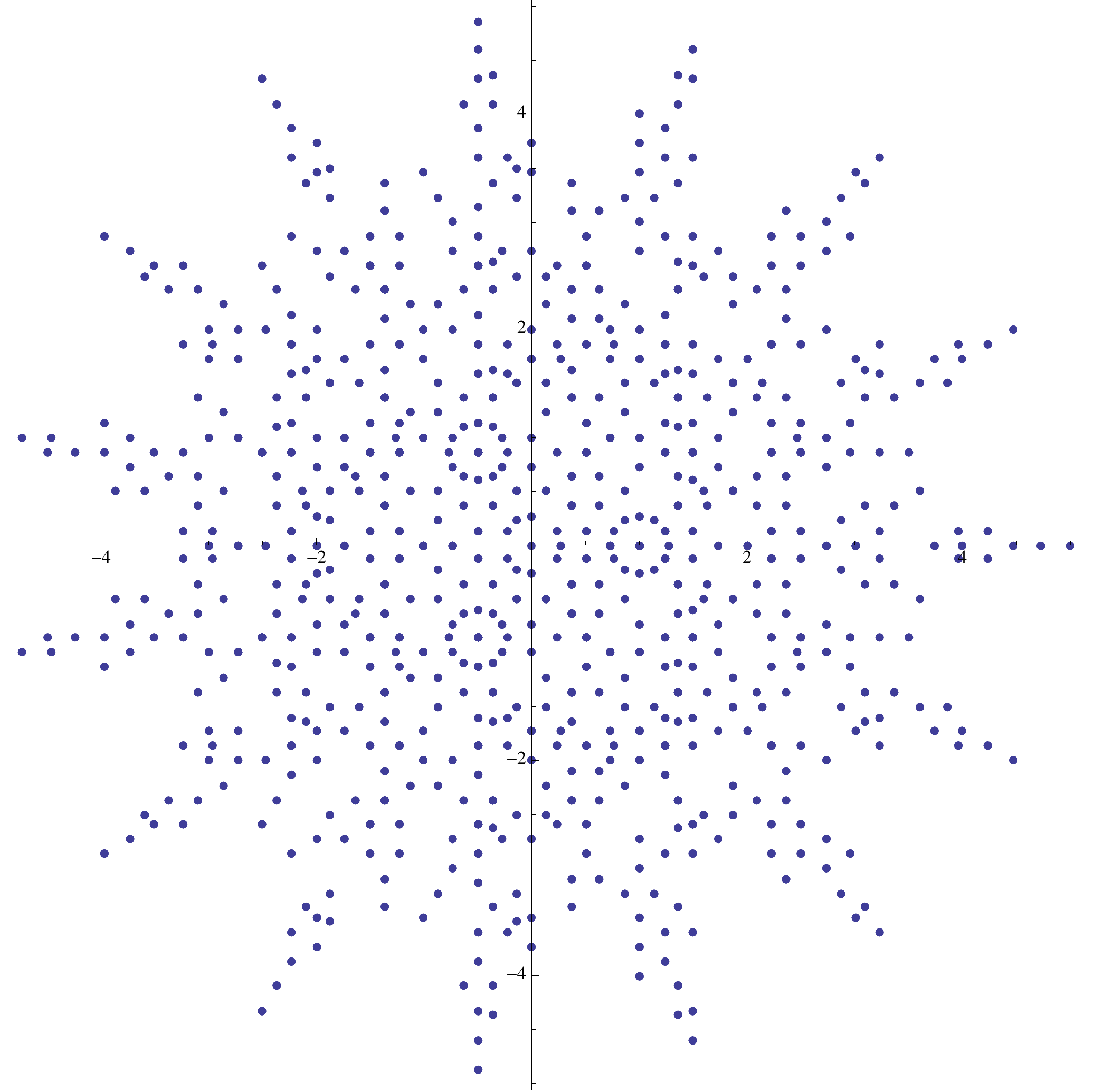}
			\caption{$j=4$, $D_3$-symmetry, $15$ cusps}
			\label{SubfigureFB2.3}
		\end{subfigure}
		\bigskip
		
		\begin{subfigure}{0.3\textwidth}
			\centering
			\includegraphics[width=\textwidth]{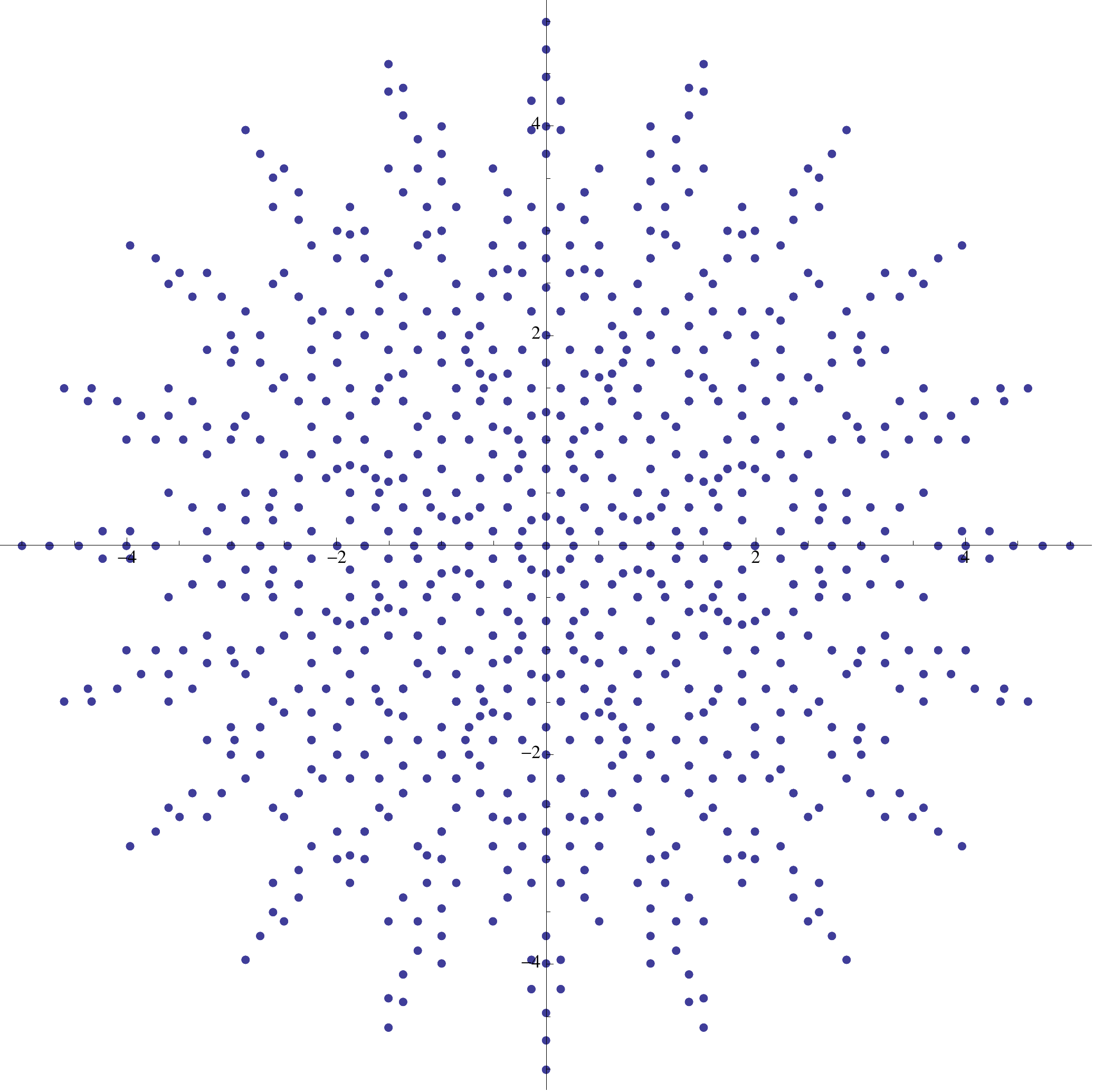}
			\caption{$j=3$, $D_4$-symmetry, $20$ cusps}
			\label{SubfigureFB2.4}
		\end{subfigure}
		\quad
		\begin{subfigure}{0.3\textwidth}
			\centering
			\includegraphics[width=\textwidth]{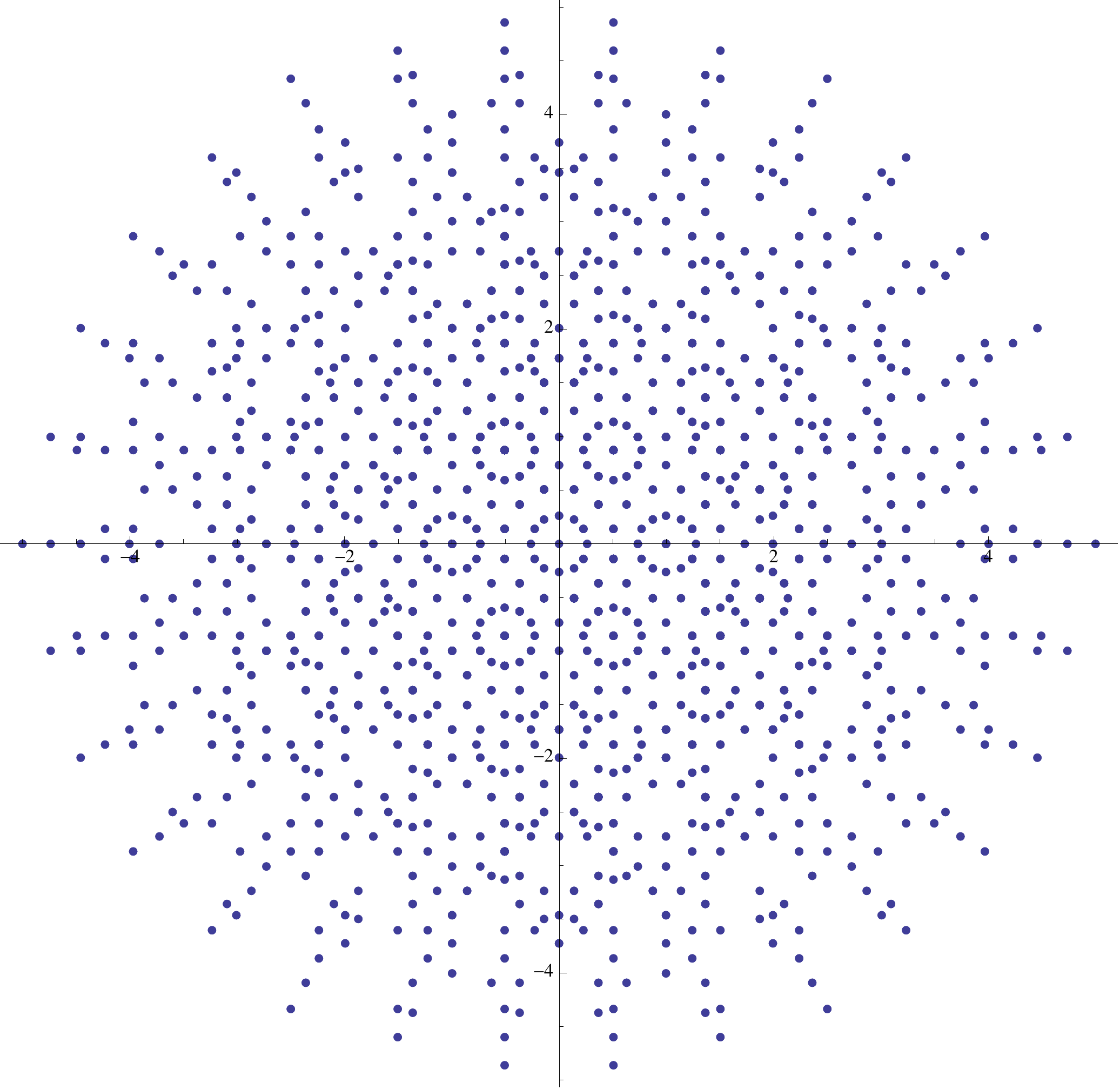}
			\caption{$j=2$, $D_6$-symmetry, $30$ cusps}
			\label{SubfigureFB2.5}
		\end{subfigure}
		\quad
		\begin{subfigure}{0.3\textwidth}
			\centering
			\includegraphics[width=\textwidth]{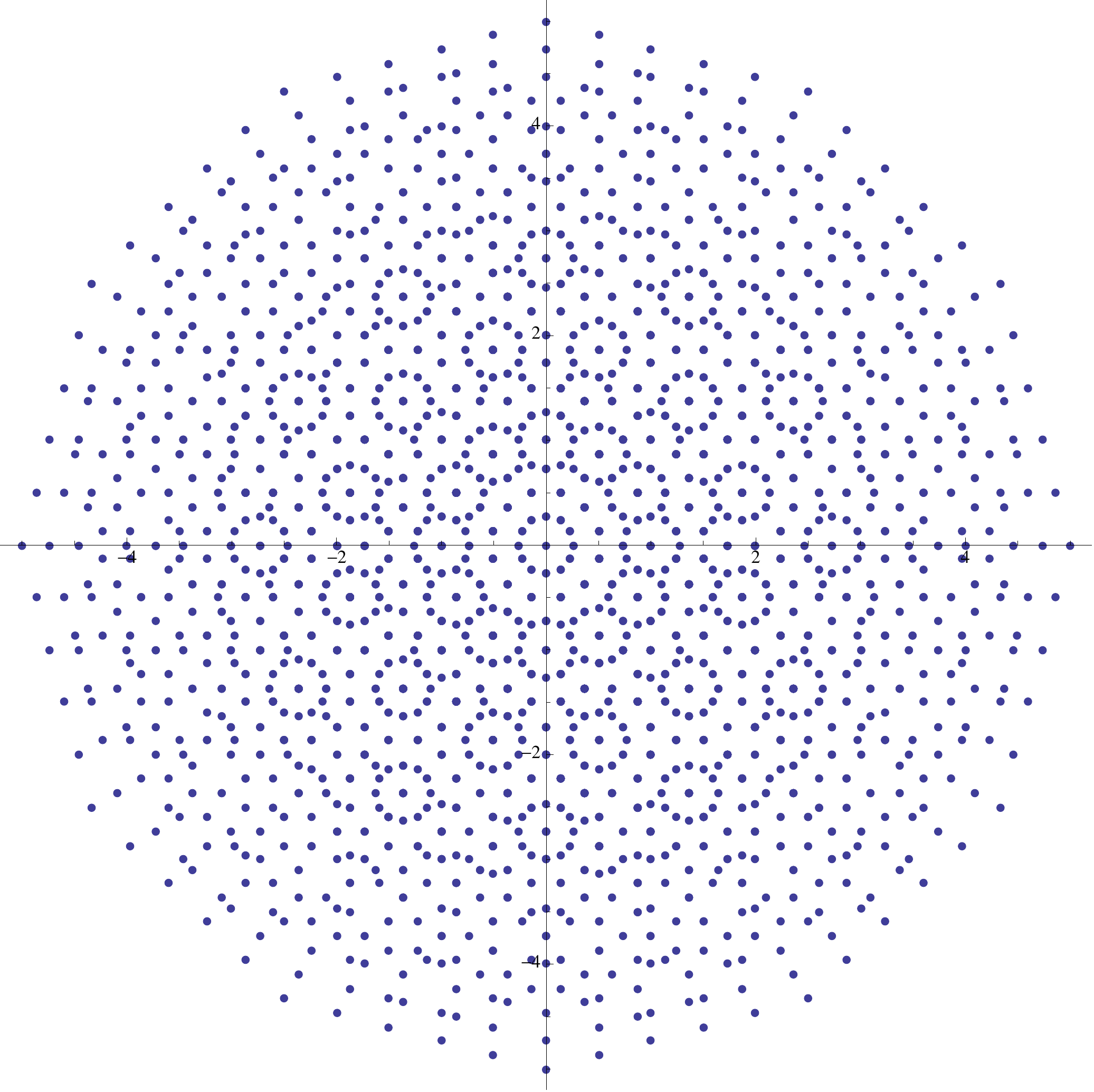}
			\caption{$j=1$, $D_{12}$-symmetry, $60$ cusps}
			\label{SubfigureFB2.6}
		\end{subfigure}
		\bigskip
		\caption{Images of the supercharacter $\sigma_X:(\Z/12\Z)^5\to\C$ where
		$X = S_5(1,1,1,1,8)+j\vec{1}$ for several values of $j$.}
		\label{FigureFB2}
	\end{figure}

\subsection{Asymptotic results}\label{SubsectionMatrix}
	As suggested in the comments following Proposition \ref{PropositionHypo}
	and illustrated in Figure \ref{FigureHypocycloid}, it is occasionally possible to
	predict the asymptotic behavior of specific families of supercharacter plots
	as the modulus $n$ tends to infinity (perhaps with some congruence restrictions imposed upon $n$).
	We develop here a general approach to determining the asymptotic appearance of certain families of symmetric supercharacter plots.

	To begin with, fix $n$ and $d$ and let $X = \{ \vec{x}_1, \vec{x}_2, \ldots, \vec{x}_r\}$ be a $S_d$-orbit in $(\Z/n\Z)^d$.
	We next form the $d \times r$ matrix $A = [ \vec{x}_1 | \vec{x}_2 | \cdots | \vec{x}_r]$ whose
	columns are the vectors $\vec{x}_i$ in $(\Z/n\Z)^d$.  
	Suppose that $A$ can be row reduced over $\Z/n\Z$ to obtain
	a simpler matrix $B = [ \vec{b}_1 | \vec{b}_2 | \cdots | \vec{b}_r]$.  
	This is equivalent to asserting that $B = RA$ where $R$ is a $d\times d$ matrix over 
	$\Z/n\Z$ such that $\det R$ belongs to $(\Z/n\Z)^{\times}$.
	Moreover, we also see that $R\vec{x}_i = \vec{b}_i$ for $i=1,2,\ldots,d$.
	Since $\det R$ is invertible modulo $n$, it follows from Cramer's Rule
	that for any $\vec{\xi} = (\xi_1,\xi_2,\ldots,\xi_d)$ in $(\Z/n\Z)^d$ the system
	$R^{-T}\vec{y} = \vec{\xi}$ has a solution $\vec{y}$.  Putting this all together we find that
	\begin{align*}
		\sigma_X(\vec{y})
		&= \sum_{\ell =1}^r e\left( \frac{\vec{x}_{\ell} \cdot \vec{y}}{n} \right)  
		= \sum_{\ell =1}^r e\left( \frac{\vec{x}_{\ell} \cdot R^T\vec{\xi}}{n} \right) \\ 
		&= \sum_{\ell =1}^r e\left( \frac{R\vec{x}_{\ell} \cdot \vec{\xi}}{n} \right) 
		= \sum_{\ell =1}^r e\left( \frac{\vec{b}_{\ell} \cdot \vec{\xi}}{n} \right)\\
		&= \sum_{\ell=1}^r e\left(\frac{b_{1,\ell}\xi_1 + b_{2,\ell}\xi_2 + \cdots + b_{d,\ell} \xi_d }{n} \right) \\
		&= \sum_{\ell=1}^r e\left( \frac{\xi_1}{n} \right)^{b_{1,\ell}}\!\!\!\! e\left( \frac{\xi_2}{n} \right)^{b_{2,\ell}}
			\!\!\!\!\!\!\cdots e\left( \frac{\xi_d}{n} \right)^{b_{d,\ell}}.
	\end{align*}
	If the final $k$ rows of $B$ are zero (which happens, for instance, if $[X]\equiv 0 \pmod{n}$), 
	then the preceding computations imply that
	the image of $\sigma_X:(\Z/n\Z)^d\to\C$ roughly approximates the image of the function $g:\T^{d-k}\to\C$ defined by
	\begin{equation}\label{eq-g}
		g(z_1,z_2,\ldots,z_{d-k}) = \sum_{\ell=1}^r \prod_{j=1}^{d-k} z_j^{b_{j \ell}}.
	\end{equation}
	Although there may be complications which arise depending upon the arithmetic relationships between the
	entries of $B$ and the modulus $n$, in many cases the preceding recipe is sufficient to determine the
	asymptotic appearance of a family of supercharacter plots.

	\begin{Example}[Hypocycloids revisited]\label{ExampleHypo}
		Suppose that $(n,d) = 1$ and let $X = S_d(1,1,\ldots,1,1-d)$ denote the $S_d$-orbit of the vector
		$(1,1,\ldots,1,1-d)$ in $(\Z/n\Z)^d$.  Following the preceding recipe note that 
		\begin{equation*}\small
			\underbrace{
			\begin{bmatrix}
				0 &1 & 1 & \cdots & 2\\
				1 & 0 & 1 & \cdots & 2 \\
				1 & 1 & 0 & \cdots & 2 \\
				\vdots & \vdots & \vdots & \ddots & \vdots \\
				1 & 1 & 1 & \cdots & 1
			\end{bmatrix}
			}_R
			\underbrace{
			\begin{bmatrix}
				1-d &1 & 1 & \cdots & 1 \\
				1 & 1-d & 1 & \cdots & 1 \\
				1 & 1 & 1-d & \cdots & 1 \\
				\vdots & \vdots & \vdots & \ddots & \vdots \\
				1 & 1 & 1 & \cdots & 1-d
			\end{bmatrix}
			}_A
			=
			\underbrace{
			\begin{bmatrix}
				1 & 0  & \cdots & 0 & -1 \\
				0 & 1 & \cdots & 0 & -1\\
				 \vdots & \vdots & \ddots & \vdots & \vdots \\
				0 & 0 &  \cdots & 1 & -1 \\
				0 & 0 &  \cdots & 0 & 0
			\end{bmatrix}
			}_B,
		\end{equation*}
		where $\det R = (-1)^{d+1} d$ is invertible over $\Z/n\Z$ (here $R$ is obtained from the all ones
		matrix by first subtracting the identity matrix and then adding $1$s to every entry of the $d$th column).
		It follows that the image of $\sigma_X:(\Z/n\Z)^d\to\C$ is contained in the range of the
		function \eqref{eq-HypocycloidMap} encountered in our previous treatment of hypocycloids.
	\end{Example}

	\begin{Example}[Hummingbirds]\label{ExampleHumming}
		Suppose that $(n,5)=1$ and let $X = S_3(1,2,n-3)$.  In this case,
		\begin{equation*}
			\underbrace{
			\megamatrix{3}{1}{0}{2}{-1}{0}{1}{1}{1}
			}_R
			\underbrace{
			\begin{bmatrix}
				 1 & 1 & 2 & 2 & -3 & -3 \\
				 2 & -3 & 1 & -3 & 1 & 2 \\
				 -3 & 2 & -3 & 1 & 2 & 1 \\
			\end{bmatrix}
			}_A
			=
			\underbrace{
			\begin{bmatrix}
				 5 & 0 & 7 & 3 & -8 & -7 \\
				 0 & 5 & 3 & 7 & -7 & -8 \\
				 0 & 0 & 0 & 0 & 0 & 0 \\
			\end{bmatrix}
			}_B,
		\end{equation*}
		where $\det R = -5$ is a unit in $\Z/n\Z$.  It follows that as $n$ increases, the image
		of $\sigma_X:(\Z/n\Z)^3 \to \C$ resembles the image of the function
		$g:\T^2 \to \C$ defined by
		\begin{equation}\label{eq-HummingMap}
			g(z_1,z_2) = z_1^5 + z_2^5 + z_1^7 z_2^3 + z_1^3 z_2^7 + \frac{1}{z_1^8 z_2^7} + \frac{1}{z_1^7 z_2^8}.
		\end{equation}
		The result is a plot which, for sufficiently large $n$, one might say, resembles a hummingbird in flight
		(see Figure \ref{FigureHummingbird}).
	\end{Example}

	\begin{figure}[htb!]
		\centering
		\begin{subfigure}[b]{0.3\textwidth}
			\centering
			\includegraphics[width=\textwidth]{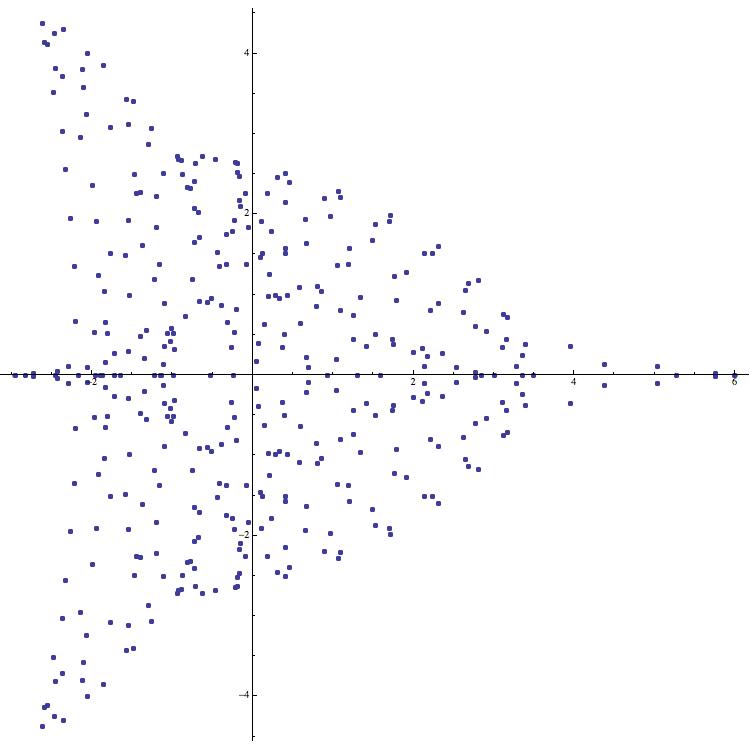}
			\caption{$n=47$}
	        \end{subfigure}
		\quad
		\begin{subfigure}[b]{0.3\textwidth}
			\centering
			\includegraphics[width=\textwidth]{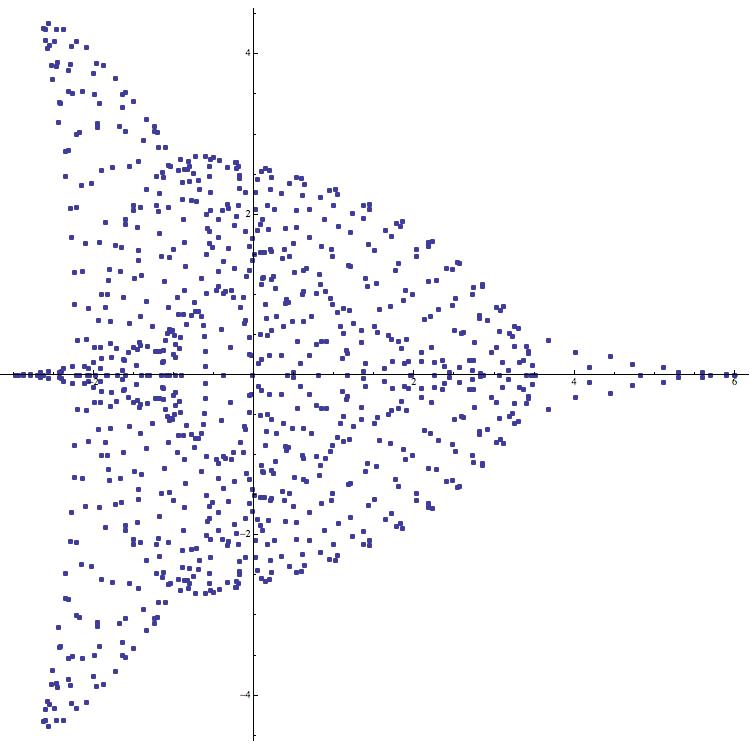}
			\caption{$n=73$}
	        \end{subfigure}
		\quad
		\begin{subfigure}[b]{0.3\textwidth}
			\centering
			\includegraphics[width=\textwidth]{HM-n173d3x1,2,170.jpg}
			\caption{$n=173$}
	        \end{subfigure}

	        \caption{Plots of $\sigma_X:(\Z/n\Z)^3\to\C$ corresponding to $X = S_3(1,2,n-3)$ for several values of $n$.}
	        \label{FigureHummingbird}
	\end{figure}	
	
	\begin{Example}[Manta rays]\label{ExampleManta}
		Consider the symmetric supercharacter $\sigma_X:(\Z/n\Z)^4\to\C$ corresponding to 
		$X = S_4(0,1,1,n-2)$.	  Working over $\Z/n\Z$ we find that
		\begin{align*}
			&\footnotesize
			\underbrace{
			\begin{bmatrix}
				 0 & 1 & 0 & 0 \\
				 0 & 0 & 1 & 0 \\
				 1 & 2 & -1 & 0 \\
				 1 & 1 & 1 & 1 \\
			\end{bmatrix}
			}_R
			\underbrace{
			\left[
			\begin{array}{cccccccccccc}
				 -2 & 1 & -2 & 0 & 0 & 1 & 1 & 1 & 1 & -2 & 0 & 1 \\
				 1 & 0 & 1 & 1 & 1 & -2 & -2 & 0 & 1 & 0 & -2 & 1 \\
				 0 & 1 & 1 & 1 & -2 & 1 & 0 & -2 & 0 & 1 & 1 & -2 \\
				 1 & -2 & 0 & -2 & 1 & 0 & 1 & 1 & -2 & 1 & 1 & 0 \\
			\end{array}
			\right]
			}_A\\
			&\qquad \footnotesize =
			\underbrace{
			\left[
			\begin{array}{cccccccccccc}
				 1 & 0 & 1 & 1 & 1 & -2 & -2 & 0 & 1 & 0 & -2 & 1 \\
				 0 & 1 & 1 & 1 & -2 & 1 & 0 & -2 & 0 & 1 & 1 & -2 \\
				 0 & 0 & -1 & 1 & 4 & -4 & -3 & 3 & 3 & -3 & -5 & 5 \\
				 0 & 0 & 0 & 0 & 0 & 0 & 0 & 0 & 0 & 0 & 0 & 0 \\
			\end{array}
			\right]
			}_B,
		\end{align*}
		where $\det R = 1$.  Thus the image of $\sigma_X$ is contained in the image of the function
		$g:\T^3\to\C$ defined by
		\begin{align*}
			g(z_1,z_2,z_3)
			&= z_1+z_2+\left(\frac{z_1 z_2}{z_3}+z_1 z_2 z_3\right)
			+\left(\frac{z_1 z_3^4}{z_2^2}+\frac{z_2}{z_1^2 z_3^4}\right)
			+\left(\frac{1}{z_1^2 z_3^3}+\frac{z_3^3}{z_2^2}\right) \\
			&\qquad +\left(z_1 z_3^3+\frac{z_2}{z_3^3}\right) 
			+\left(\frac{z_2}{z_1^2 z_3^5}+\frac{z_1 z_3^5}{z_2^2} \right).
		\end{align*}
		For large $n$, the image of $\sigma_X:(\Z/n\Z)^4\to\C$ bears an uncanny resemblance
		to a manta ray followed by a trail of bubbles (see Figure \ref{FigureManta}).
	\end{Example}
	
	\begin{figure}[htb!]
		\centering
		\begin{subfigure}[b]{0.3\textwidth}
	                \centering
	                \includegraphics[width=\textwidth]{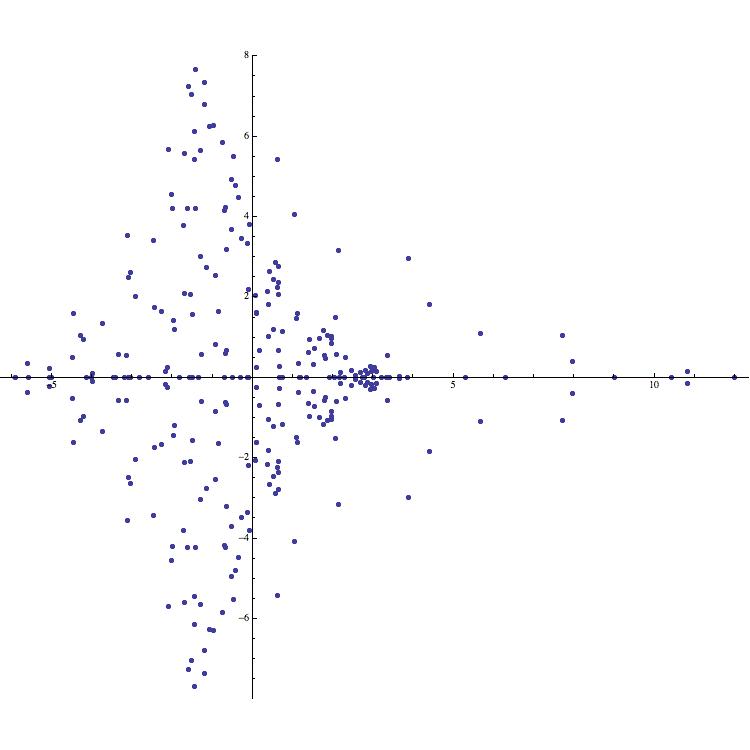}
	                \caption{$n=17$}
	        \end{subfigure}
	        \quad
		\begin{subfigure}[b]{0.3\textwidth}
	                \centering
	                \includegraphics[width=\textwidth]{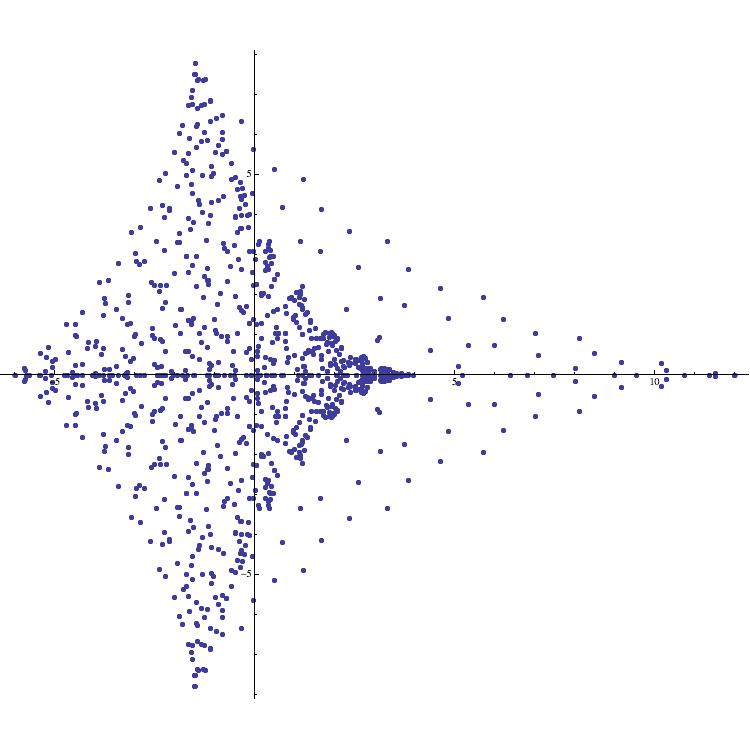}
	                \caption{$n=27$}
	        \end{subfigure}
	        \quad
		\begin{subfigure}[b]{0.3\textwidth}
	                \centering
	                \includegraphics[width=\textwidth]{MANTA-n47d4x0,1,1,45.jpg}
	                \caption{$n=47$}
	        \end{subfigure}

	        \caption{Plots of $\sigma_X:(\Z/n\Z)^4\to\C$ corresponding
	        to $X = S_4(0,1,1,n-2)$ for several values of $n$.} 
	        \label{FigureManta}
	\end{figure}		
	
	We conclude this note with a final example which combines a number of the ideas
	and techniques developed in the preceding pages.	
	
	\begin{Example}[Manta rays as graphical elements]
		In light of Example \ref{ExampleManta}, plotting the symmetric supercharacter 
		$\sigma_X:(\Z/30\Z)^4\to\C$ corresponding to $X = S_4(0,1,1,28)$ yields the familiar
		manta ray (see Figure \ref{SubfigureMR0}).  Using 
		\eqref{eq-MainIdentity} we are able to construct a number of intriguing new images
		based upon the initial manta ray image.  Applying \eqref{eq-MainIdentity} with the parameters
		$n=30$, $d=4$, $j=15$, and $k=0$ we find that
		\begin{equation*}
			\sigma_{S_4(15,16,16,13)}(Y) = e\big( \tfrac{[Y]}{2}\big) \sigma_X(Y).
		\end{equation*}
		Roughly speaking, the image of $\sigma_{S_4(15,16,16,13)}$ is obtained by leaving
		half of the points in $\sigma_X$ in place while negating the other half.  This is precisely the
		behavior that is illustrated in Figure \ref{SubfigureMR15}.  In particular, the image
		in Figure \ref{SubfigureMR15} is \emph{not} symmetric with respect to the imaginary axis.
		On the other hand, applying \eqref{eq-MainIdentity} with $n=30$, $d=4$, $j = 10$, and $k = \ell - [Y]$ yields
		\begin{equation*}
			\sigma_{S_4(10,11,11,8)}(Y + k \vec{1}) = e\big(\tfrac{\ell}{3} \big) \sigma_X(Y)
		\end{equation*}
		whence the image of $\sigma_{S_4(10,11,11,8)}$ is precisely the union of three rotated copies of Figure 
		\ref{SubfigureMR0} (see Figure \ref{SubfigureMR15}).  Plots corresponding to various values of the parameter $j$
		are provided in Figure \ref{FigureRotate}.  The reader is invited to experiment with \eqref{eq-MainIdentity} in order
		to explain the symmetries of these figures	(e.g., Figure \ref{SubfigureMR5} gives 
		the initial impression of having $D_6$ symmetry, whereas a closer
		inspection reveals that it only enjoys $D_3$ symmetry).
	\end{Example}

	\begin{figure}[h]
		\begin{subfigure}{0.4\textwidth}
			\centering
			\includegraphics[width=\textwidth]{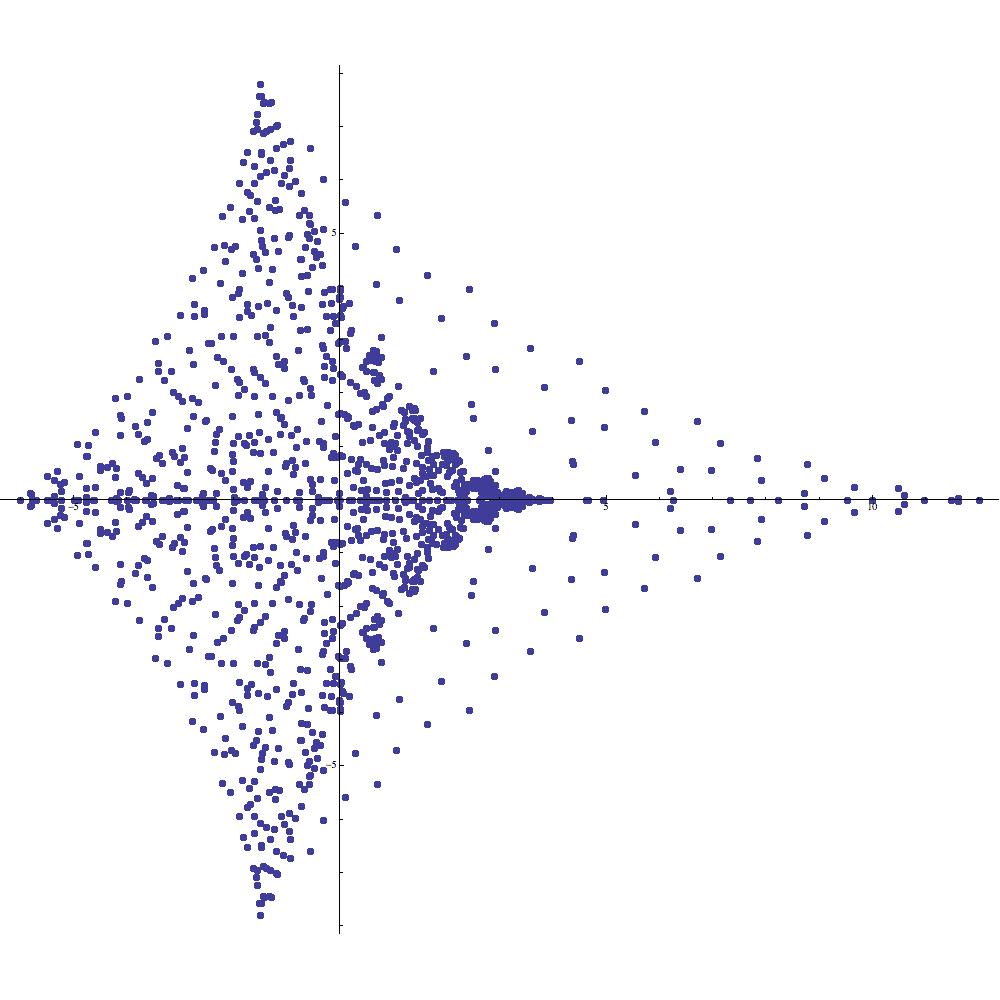}
			\caption{$j=0$}
			\label{SubfigureMR0}
		\end{subfigure}
		\quad
		\begin{subfigure}{0.5\textwidth}
			\centering
			\includegraphics[width=\textwidth]{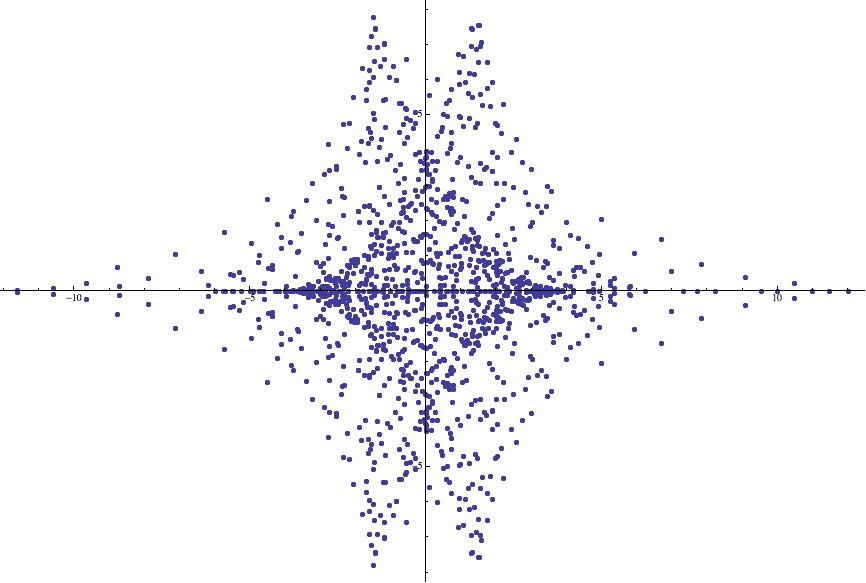}
			\caption{$j=15$}
			\label{SubfigureMR15}
		\end{subfigure}
		\bigskip

		\begin{subfigure}{0.3\textwidth}
			\centering
			\includegraphics[width=\textwidth]{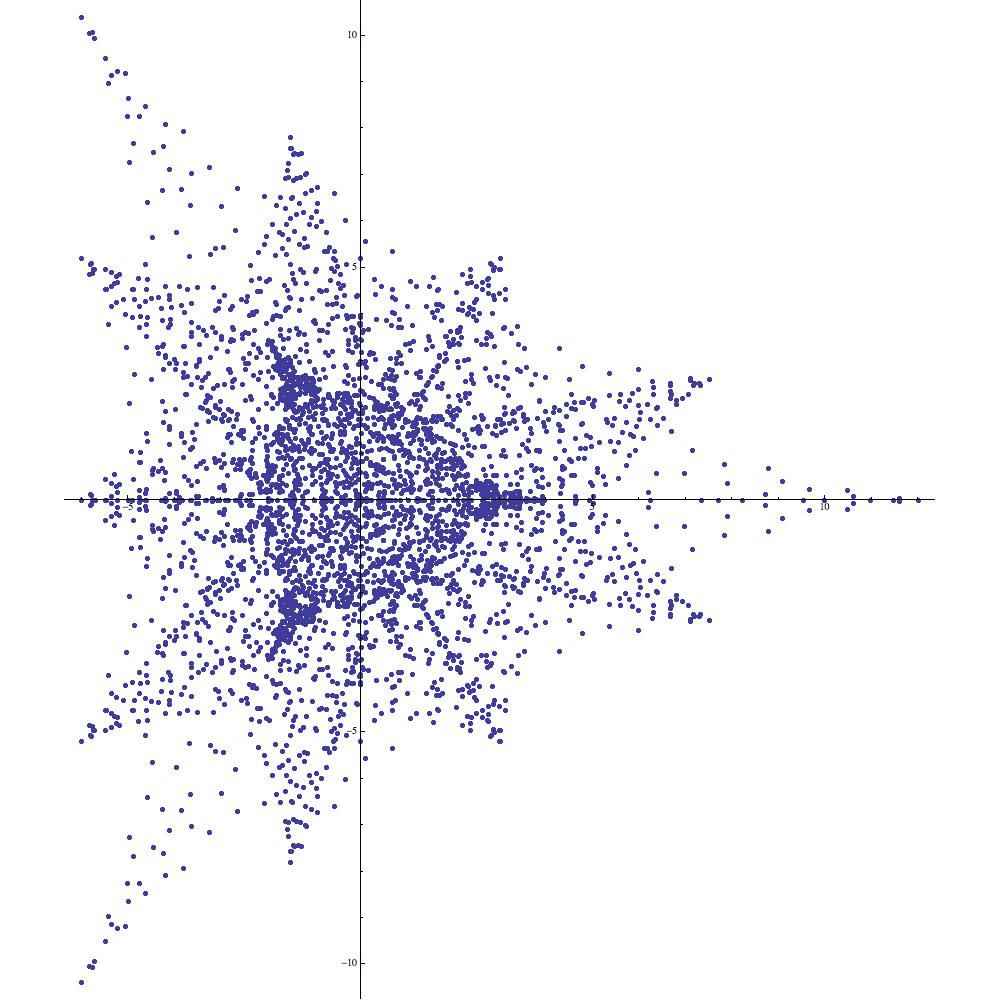}
			\caption{$j=10$}
			\label{SubfigureMR10}
		\end{subfigure}
		\quad
		\begin{subfigure}{0.3\textwidth}
			\centering
			\includegraphics[width=\textwidth]{MANTA-n30d4j6.jpg}
			\caption{$j=6$}
			\label{SubfigureMR6}
		\end{subfigure}
		\quad
		\begin{subfigure}{0.3\textwidth}
			\centering
			\includegraphics[width=\textwidth]{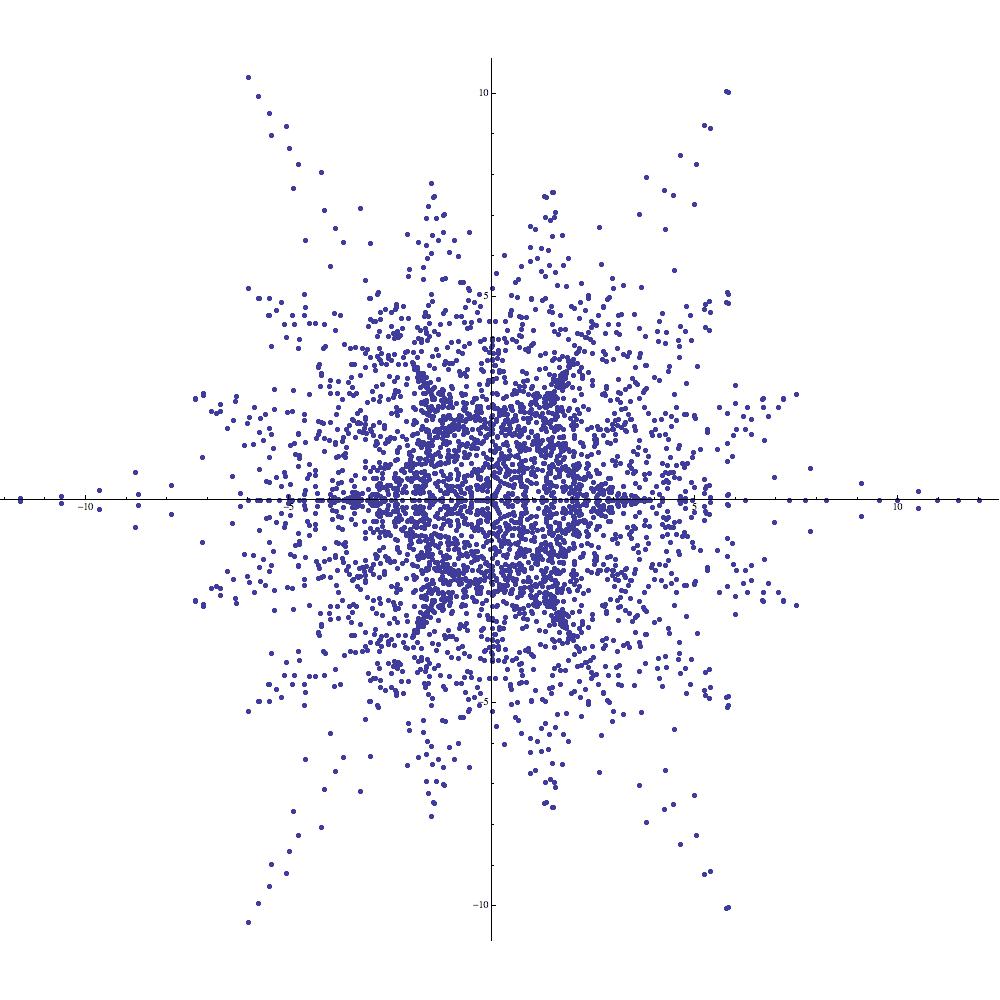}
			\caption{$j=5$}
			\label{SubfigureMR5}
		\end{subfigure}
		\bigskip

		\begin{subfigure}{0.3\textwidth}
			\centering
			\includegraphics[width=\textwidth]{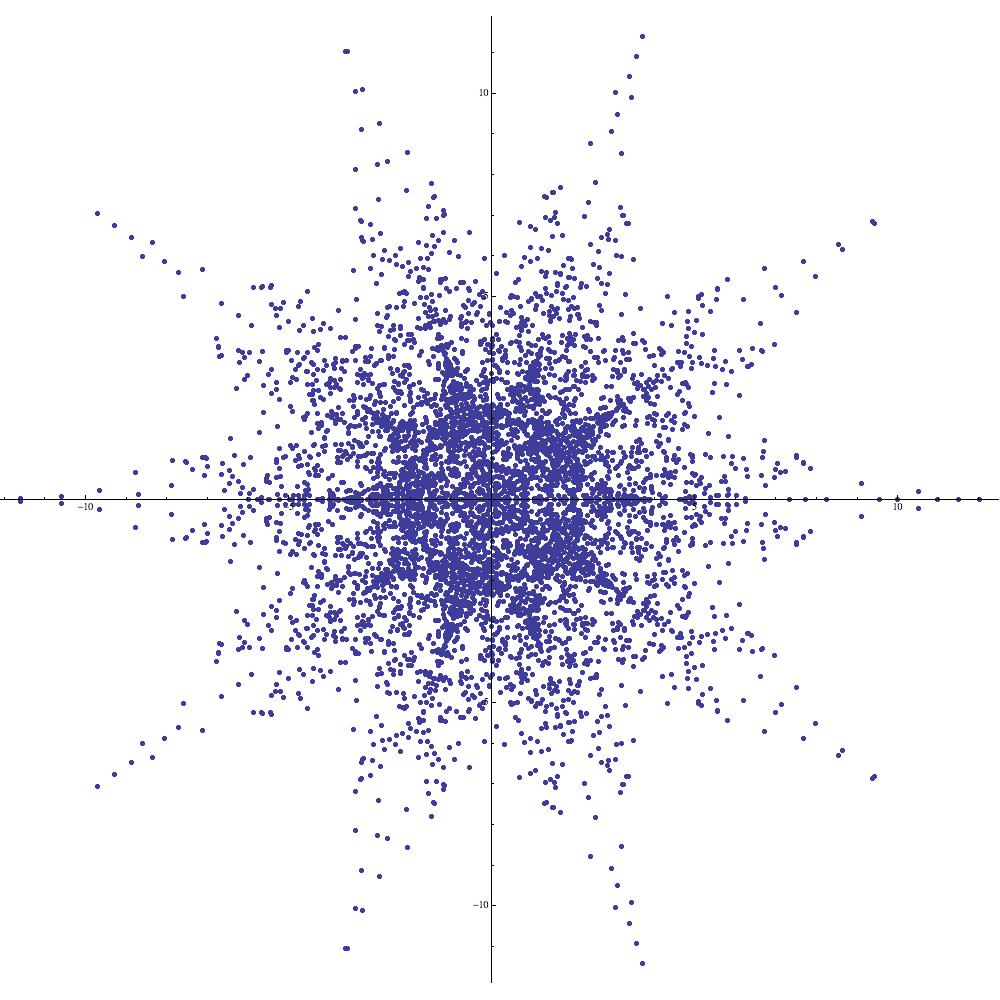}
			\caption{$j=3$}
			\label{SubfigureMR3}
		\end{subfigure}
		\quad
		\begin{subfigure}{0.3\textwidth}
			\centering
			\includegraphics[width=\textwidth]{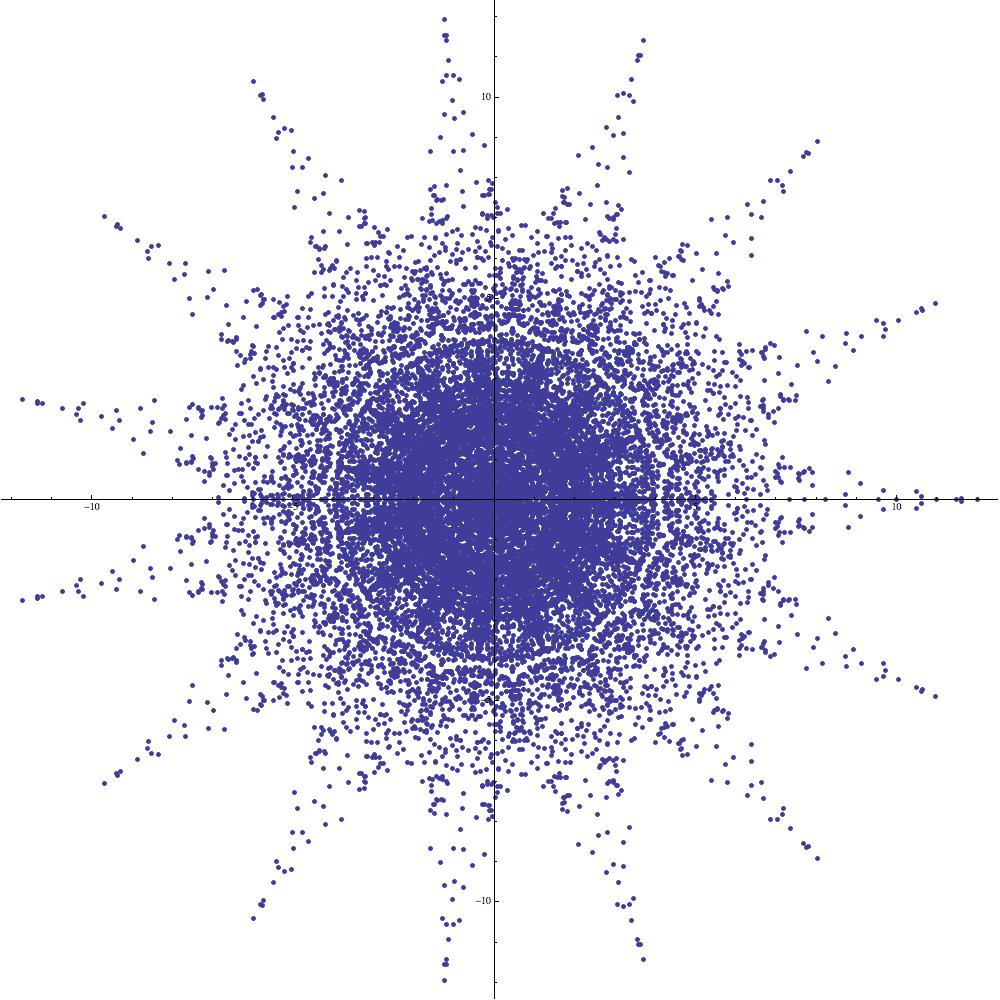}
			\caption{$j=2$}
			\label{SubfigureMR2}
		\end{subfigure}
		\quad
		\begin{subfigure}{0.3\textwidth}
			\centering
			\includegraphics[width=\textwidth]{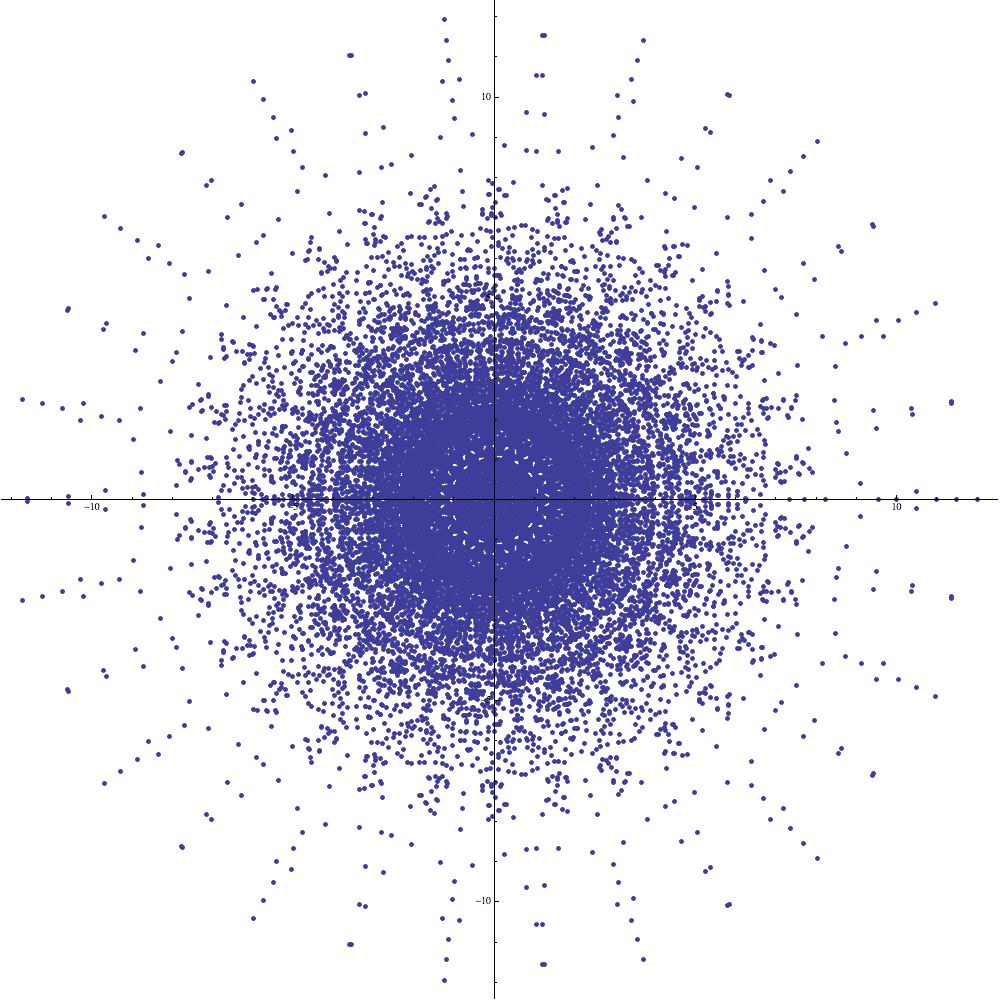}
			\caption{$j=1$}
			\label{SubfigureMR1}
		\end{subfigure}

		\caption{Images of the supercharacter $\sigma_X:(\Z/30\Z)^4\to\C$ where
		$X = S_4(0,1,1,28)+j\vec{1}$ for several values of $j$.}
		\label{FigureRotate}
	\end{figure}

\section{Open questions}\label{SectionOpen}
	As we have seen, symmetric supercharacters enjoy a vast and diverse array
	of striking visual features.  We have surveyed many of these phenomena and provided
	explanations for quite a few of them.  However, the discovery of these enigmatic plots and our
	subsequent investigations seem to generate more questions than answers.
	It is clear that our understanding of symmetric supercharacters is far from complete.
	Indeed, our work suggests that the deepest properties of these sums are yet to be discovered.
	In this spirit, we conclude this article with a number of open questions inspired by the
	preceding investigations.\footnote{In order to facilitate the work of other researchers, we have included in the following appendix
	the \texttt{Mathematica} code for generating supercharacter plots.}

	\begin{Question}
		What happens if one considers the action of the alternating group $A_d$ on $(\Z/n\Z)^d$ in place
		of $S_d$?  How do the resulting images relate to the images obtained from $S_d$?
	\end{Question}
	
	\begin{Question}
		Recall from the introduction that the values of a symmetric supercharacter can be interpreted in terms of matrix permanents.
		Does this interpretation shed any light on the properties of symmetric supercharacters?
	\end{Question}

	\begin{Question}
		Some families of supercharacters (e.g., Gaussian periods \cite{GNGP}, Kloosterman sums \cite{SESUP}, Ramanujan sums \cite{RSS}; 
		see Table \ref{FigureNT}) enjoy certain multiplicative properties.  Do symmetric supercharacters possess similar properties?
	\end{Question}

	\begin{Question}
		What is the mechanism which produces the nested pentagons in Figure \ref{FigureHook}(B)?  
	\end{Question}

	\begin{Question}
		What is the mechanism which produces the jagged linear features appearing in Figure \ref{FigureHook2}(A)?
	\end{Question}

	\begin{Question}
		What is the mechanism which produces the parabolic features appearing in Figure \ref{FigureHook2}(C)?
	\end{Question}

	\begin{Question}
		Examples \ref{ExampleHypo}, \ref{ExampleHumming}, and \ref{ExampleManta} all concern certain families of symmetric supercharacters
		which exhibit clearly discernible asymptotic behavior as the modulus $n$ tends to infinity (perhaps with some congruence restrictions). 
		Can one develop a general theory describing the asymptotic behavior of symmetric supercharacters?
	\end{Question}
	
	\begin{Question}
		If $X = -X$, then the corresponding supercharacter is real-valued (see Subsection \ref{SubsectionConjugate}
		and Figure \ref{FigureReal}).  Is it possible to describe the statistical distribution of the values of certain families of real-valued 
		symmetric supercharacters?
	\end{Question}

	\begin{Question}
		As mentioned in the introduction, the matrix \eqref{eq-U} encodes an analogue of the discrete Fourier transform
		on the space of all superclass functions \cite{SESUP}.  Are there compelling applications of such a transform?
	\end{Question}

	\begin{Question}
		Given the central role that the symmetric groups have played in this work, it is perhaps surprising that our approach has
		involved only a minimal level of combinatorial reasoning.  What properties of symmetric supercharacters can be discovered using 
		more sophisticated combinatorial tools?
	\end{Question}

	\begin{Question}
		Is there a relatively simple algebraic procedure which can be used to obtain the boundary 
		of the image of a function of the form \eqref{eq-g}?  For instance, is there a simple description of the
		boundary of the ``hummingbird'' from Figure \ref{FigureHummingbird}?
		Is there a simple description of the two regions suggested by the ``manta ray'' from Figure \ref{FigureManta}?
	\end{Question}
	
	\begin{Question}
		We have seen in Subsection \ref{SubsectionHypocycloids} that the orbit $X = S_d(1,1,\ldots,1,1-d)$ yields an image which, 
		for sufficiently large $n/(n,d)$, closely approximates the set of all traces of matrices in $SU(d)$.  Can one identify other families 
		of supercharacters whose asymptotic behavior is related to the traces of various subgroups of the unitary group $U(d)$?
	\end{Question}

\appendix

\section{\texttt{Mathematica} Code}\label{SectionCode}
We provide below the basic \texttt{Mathematica} code for producing plots of symmetric supercharacters.
\bigskip

\noindent
\texttt{superChar[n, char, class]} numerically evaluates the supercharacter corresponding to the $S_d$ orbit of \texttt{char} (where $d$ is the length of
\texttt{char}) acting
on the superclass \texttt{class};
e.g., \verb+superChar[7, {1,2,3,4}, {0,2,4,6}]+.
\smallskip
  
\begin{verbatim}
superChar[n_, char_, class_] := Module[{zeta, charOrbit, dotList},
  zeta = N[Exp[2 Pi I /n]];
  charOrbit = Permute[char, SymmetricGroup[Length[char]]];
  dotList = class.# & /@ charOrbit;
  Sum[zeta^dotList[[i]], {i, 1, Length[dotList]}]]
\end{verbatim}
\bigskip

\noindent
\texttt{singleChar[n, char]} gives a list of all values of the supercharacter corresponding to the $S_d$ orbit of \texttt{char};
e.g., \verb+singleChar[7, {1,2,3,4}]+.
\smallskip
  
\begin{verbatim}
singleChar[n_, char_] := Module[{d, elems},
  d = Length[char];
  elems = 
   First /@ 
    GroupOrbits[SymmetricGroup[d], Tuples[Range[0, n - 1], d], 
     Permute];
  Chop[ParallelMap[superChar[n, char, #] &, elems]]]
\end{verbatim}
\bigskip
  
\noindent
\texttt{singleCharVectorPlot[n, char]} creates a vector graphics plot of a symmetric supercharacter;
e.g., \verb+singleCharVectorPlot[7, {1,2,3,4}]+.
\smallskip

\begin{verbatim}
singleCharVectorPlot[n_, char_] := 
  ListPlot[{Re[#], Im[#]} & /@ 
    DeleteDuplicates[Chop[singleChar[n, char]], Equal], 
    AspectRatio -> Automatic, PlotRange -> All]
\end{verbatim}
\bigskip

\noindent
\texttt{singleCharBitmapPlot[n, char, range, unitRes]} creates a bitmap image of a supercharacter, 
with a range of \texttt{[-range, range]} along both axes and a unit resolution of \texttt{unitRes}; 
e.g., 
\verb+singleCharBitmapPlot[7, {1,2,3,4}, 30, 30]+.
\smallskip

\begin{verbatim}
singleCharBitmapPlot[n_, char_, range_, unitRes_] := 
 Module[{circ3, res, charRow, array, entries},
  circ3 = {{0.3, 0.75, 0.3}, {0.75, 1, 0.75}, {0.3, 0.75, 0.3}};
  res = unitRes*range;
  charRow = singleChar[n, char];
  array = Normal[SparseArray[{}, {2 res, 2 res}, 0]];
  entries = 
   ParallelMap[
    Round[({res - unitRes*Im[#], res + unitRes*Re[#]} &)@#] &, 
    charRow];
  Do[With[{r = entries[[j, 1]], c = entries[[j, 2]]}, 
    If[1 < r < 2 res && 1 < c < 2 res, 
     array[[r - 1 ;; r + 1, c - 1 ;; c + 1]] = circ3]],
   {j, Length[charRow]}];
  Image[1 - array]]
\end{verbatim}

\bibliography{GNSG}
\bibliographystyle{plain}
\end{document}